\documentclass[11pt]{article}
\usepackage[utf8]{inputenc}
\usepackage[english]{babel}
\usepackage{amsfonts}
\usepackage{amsmath,amsthm,amscd,amssymb,mathrsfs,setspace}
\usepackage{mathtools}
\usepackage{cite}
\usepackage{latexsym,epsf,epsfig}
\usepackage{color}
\usepackage[a4paper,top=1in,bottom=1in,left=1in,right=1in,marginparwidth=0.5in]{geometry}
\usepackage[colorlinks,citecolor=lccx]{hyperref}
\usepackage{dsfont}
\usepackage{comment}
\usepackage{comment}
\usepackage[nameinlink,capitalise]{cleveref}
\usepackage[mathscr]{euscript}
\usepackage{dutchcal}
\usepackage{algorithm}
\usepackage{algorithmicx}
\usepackage{algpseudocode}
\usepackage[table]{xcolor}
\usepackage{soul}
\usepackage{subcaption}
\usepackage{siunitx}
\usepackage{tikz}
\usetikzlibrary{decorations.pathreplacing,calligraphy}
\usepackage{comment}
\usepackage{booktabs}
\usepackage{multirow}
\usepackage{adjustbox}
\usepackage{authblk}
\usepackage{comment}
\usepackage{enumitem}
\usepackage{xfrac}

\definecolor{lccx}{HTML}{92268F}

\ifpdf
\hypersetup{
	pdftitle={Coefficient identification manuscript},
	pdfauthor={},
	linkcolor=lccx
}
\fi

\newcommand{\N}{\mathbb{N}}
\newcommand{\R}{\mathbb{R}}

\newcommand{\calC}{\mathcal{C}}
\newcommand{\calM}{\mathcal{M}}

\newcommand{\calF}{\mathcal{F}}
\newcommand{\calL}{\mathcal{L}}
\newcommand{\calH}{\mathcal{H}}
\newcommand{\maxrho}{\sideset{}{_\rho}{\max}}

\DeclareMathOperator*{\BV}{BV}

\DeclareMathOperator*{\TV}{TV}

\newcommand*\dd{\mathop{}\!\mathrm{d}}

\DeclareMathOperator*{\dvg}{div}
\DeclareMathOperator{\supp}{supp}
\DeclareMathOperator*{\pred}{pred}
\DeclareMathOperator*{\ared}{ared}

\newcommand{\weakto}{\rightharpoonup}
\newcommand{\weakstarto}{\stackrel{\ast}{\rightharpoonup}}
\newcommand{\feas}{U_{\textrm{ad}}}

\newtheorem{theorem}{Theorem}[section]
\newtheorem{proposition}[theorem]{Proposition}
\newtheorem{lemma}[theorem]{Lemma}
\newtheorem{corollary}[theorem]{Corollary}

\theoremstyle{definition}
\newtheorem{definition}[theorem]{Definition}
\newtheorem{assumption}[theorem]{Assumption}

\theoremstyle{remark}
\newtheorem{remark}[theorem]{Remark}

\crefname{assumption}{Assumption}{Assumptions}
\Crefname{assumption}{Assumption}{Assumptions}

\title{Optimizing the Principal Coefficient of
Elliptic Equations using $L^p$-regularity,
$p < \infty$}

\author[1]{Ala' Alalabi}
\author[1]{Lorena Bociu}
\author[2]{Paul Manns}

\affil[1]{Department of Mathematics, NC State University, North Carolina, USA\\

\textit{awalalab@ncsu.edu, lvbociu@ncsu.edu}}

\affil[2]{Department of Mathematics, TU Dortmund University, Dortmund, Germany\\
\textit{paul.manns@tu-dortmund.de}}
\begin{document}
\maketitle
\begingroup
\renewcommand\thefootnote{\fnsymbol{footnote}}
\footnotetext[0]{Lorena Bociu acknowledges funding by NSF-DMS 2510261.
Paul Manns acknowledges funding by DFG under project no.~540198933.}
\endgroup

\begin{abstract}
We study coefficient identification problems for elliptic partial differential equations with total variation regularization and control constraints. Existing related literature relies on continuity and differentiability properties of the control-to-state operator with respect to the $L^\infty$-norm. While this is sufficient for deriving optimality conditions, it is not well-suited for numerical algorithms, as it neglects the spatial extent of perturbations and leads to a qualitative discrepancy compared to $L^q$-norms with $q < \infty$. In this work, we address this gap by exploiting $W^{1,s}$-regularity results to establish differentiability properties of the control-to-state operator with respect to $L^q$-norms for finite $q$. Based on this framework, we derive first- and second-order differentiability results for the reduced objective functional and establish first-order optimality conditions involving a restricted subdifferential characterization of the total variation seminorm and corresponding regularity of the associated multipliers. Building on this, we analyze a nonsmooth trust-region method based on an $L^r$-trust region for $r > 0.5 q$ and prove its convergence to first-order stationary points.
\end{abstract}

\textbf{Keywords:} Elliptic partial differential equations,
coefficient identification,
PDE-constrained optimization,
total variation regularization,
trust-region methods.

\section{Introduction}\label{sec:introduction}
Let $\Omega \subset \R^d$, $d \ge 2$, be a bounded domain with Lipschitz boundary $\partial \Omega$. We consider optimization problems of the form
\begin{gather}\label{eq:p}
 \min_{y,u}\ [F(y) + \TV(u)]
\ \text{subject to}\ 
\left\{
\begin{aligned}
- \dvg(  u \nabla y) &= f \text{ in } \Omega, \\
y &= f_D \text{ on } \tilde{\Gamma}, \\
 u \nabla y \cdot \nu  &= 0  \text{ on } \Gamma, \\
0 < u_\ell \le u(x) &\le u_u \text{ for a.e.\ }x \in \Omega,
\end{aligned}
\right.
\tag{P}
\end{gather}
with given source terms $f \in W^{-1,2}(\Omega)$ and  $f_D \in W^{1/2,2} (\tilde{\Gamma})$.
The boundary $\partial \Omega$ is $C^{1,\alpha}$, the vector field $\nu$ denotes the unit outer normal on ${\Gamma}$, and the non-empty sets
$\tilde{\Gamma}$, $\Gamma$ are
closed and non-negligible in $\partial \Omega$.
The boundary setting corresponds to the one provided in Theorem 7.36.5
in \cite{medkova2018laplace} and in particular
satisfies the conditions
of Definition 2 in \cite{groger1989aw}, which
\emph{means, roughly speaking, that $\Gamma$ and 
$\tilde{\Gamma}$ are separated by a Lipschitzian 
hypersurface of $\Omega$} \cite{groger1989aw} 
(see also the discussion in Appendix A of 
\cite{clason2018total} and
\cite{haller2009holder,rehberg2015holder,haller2024holder} 
for further characterizations and refined results). 

We assume that the cost functional $F : W^{1,2}(\Omega) \to \R$ is weakly lower semi-continuous and bounded below
and, as usual, $\TV : L^1(\Omega) \to [0,\infty]$ denotes the total variation seminorm.
The positive scalars $u_\ell$ and $u_u$ define uniform box constraints on the control  $u$
and we denote the set of feasible controls with finite objective value by
\[ \feas = \{ u \in \BV(\Omega)\,:\, u(x) \in [u_\ell,u_u] \text{ for a.e.\ }x \in \Omega \}. \]

A closely related optimization problem is studied in \cite{clason2018total}, while \cite{assmann2013identification} considers the same setting with $H^s$-regularization 
($s < 1$) instead of $\TV$-regularization. 
The recent article \cite{antil2024uniform} 
also studies a related problem in topology optimization, where regularization is achieved not through a penalty term in the objective, but via a so-called \emph{Helmholtz filter}\cite{lazarov2011filters} that smooths the control $u$
before it is inserted into the PDE. 
 All of these regularizations recover the lack of compactness
of the solution operator to the PDE that is due to the product $u \nabla y$. What these
articles as well as others addressing related problems (see, e.g., \cite{garcke2022phase})
have in common is that substantial portions, and in some cases the entirety, of their analysis rely on continuity and differentiability of the control-to-state operator that maps $u$ to the
corresponding $y(u)$ with respect to the $L^\infty(\Omega)$-norm on the feasible set $\feas$.
Although it is well known in the community (see, e.g., \cite{clason2018total,clason2021optimal})
that, following the seminal work of Gröger \cite{groger1989aw}, 
improved $W^{1,s}$-regularity theory for $s > 2$ is available, this theory has so far seen little use in the optimal control and topology optimization literature for establishing the relevant continuity and differentiability properties.
We believe that this may be explained by the fact that the optimality conditions satisfied at the points one aims to recover are essentially unchanged, whether or not additional regularity of the control-to-state operator has been verified.

The situation changes, however, when one aims to prove (global) convergence of nonlinear programming-type algorithms to stationary points. If a globalization mechanism such as a trust region or a proximal operator is employed, the analysis must exploit regularity with respect to the norm associated with the trust region or proximal operator.
We believe that using the $L^\infty$-norm for this purpose is impractical, because it accounts only for the maximal pointwise height difference between two successive control iterates, while completely neglecting the measure of the region on which this difference occurs. In particular, a step that changes the control from $u_\ell$ to $u_u$
 on an arbitrarily small subset of $\Omega$ of positive measure is treated as being of the same size as a step that changes the control from $u_\ell$ to $u_u$
 on the whole domain $\Omega$. We highlight that the $L^\infty$-norm
is the only one with this behavior so that there is a qualitative gap between using the
an $L^r$-norm with $r < \infty$ and $r = \infty$. This qualitative gap can also be
observed in the behavior of smoothing operators via mollification, where one
can recover a fixed limiting control in $L^p$ for all $p \in [1,\infty)$
but not for $p = \infty$.

In this article we address this gap in the literature and make the following contributions.
Using the aforementioned regularity theory of Gröger \cite{groger1989aw}, 
we provide the first-order derivative of the reduced objective functional
$F \circ S$, where the control-to-state operator $S$ is a map $L^q(\Omega) \to \R$, for some $q < \infty$. Achieving this improved regularity requires
 slightly higher regularity of the
source terms (see \cref{prop:Fu_W-1-s} and \cref{prp-new:elementary_properties})
and a concession on a mild norm gap in the derivative. For second-order differentiability, additional assumptions are needed, and the required integrability exponent of the input must be increased further, though it can still be chosen finite.
We establish a first-order optimality condition for \eqref{eq:p} in $\BV(\Omega) \cap L^q(\Omega)$ that involves a 
restricted subdifferential inequality for the nonsmooth but convex total variation 
seminorm as well as $L^{q'}$-regularity of the multipliers for the box constraints in 
$\feas$, where $q'$ is the H\"older conjugate of $q$. To this end, we advance the 
proof strategy from \cite{natemeyer2022penalty} by cutoff arguments to handle the 
lower regularity of the objective and multipliers. 

Furthermore, we prove the  convergence of a  non-smooth trust-region algorithm to 
first-order optimal points by using a trust-region ball with respect to the 
$L^r$-norm with $r > 0.5 q$. If second-order information is
used in the model function of the trust-region algorithm, one may also need to 
increase $q$ due to the higher integrability index required for the second derivative.
To this end, we follow the high-level proof strategy of \S11 in \cite{conn2000trust}, but adapt it to accommodate the fact that, in our setting, $\TV$-seminorm  is not Lipschitz continuous with respect to the  $L^q$-norm.
Specifically, we show how the Cauchy decrease condition 
therein can be substituted by descent properties that arise from the maximal 
monotonicity of the subdifferential of the $\TV$-seminorm.

In the interest of a balanced discussion, we note that $q$
is generally much larger than $2$, so that implementing the trust-region subproblems with a value
covered by the theory may be challenging in practice. Even so, our analysis bridges the
qualitative gap between the $L^\infty$ case and the  $L^q$ setting for finite 
$q$. Bulding on these results, one may, for example, introduce
mollification if additional regularity is required.

The remainder of the article is structured as follows.
Section~\ref{Section2} introduces the notation used throughout the paper,
presents the state and adjoint equations and establishes regularity properties of the control-to-state operator. Moreover, it  establishes first- and second-order differentiability of the control-to-state operator and the reduced objective functional.
Section~\ref{sec:first-order_opt} derives first-order optimality conditions for the coefficient identification problem.
Finally, Section~\ref{sec:tr_algorithm} presents a nonsmooth trust-region framework and proves its convergence to first-order stationary points.

\section{Analysis of State and Adjoint Equations} \label{Section2}
We begin the section by introducing the notation used throughout 
the paper.

Let $C$ be a convex set and $X$ be a Banach space with norm $\|\cdot\|_X$.
We say that a function $j : C \to \R$ is differentiable at $u \in C$ with respect to $\|\cdot\|_X$
if there exists $j'(u) \in X^*$ such that
\[
\underset{\substack{\phi \to 0 \\ u + \phi \in C}}
\lim \frac{\big|j(u + \phi) - j(u) - \langle j'(u), \phi\rangle_{X^*,X}\big|}{\|\phi\|_{X}} = 0.
\]
\begin{remark}
Although we define differentiability in terms of feasible variations around the linearization point $u$, we note that one could alternatively embed the feasible set into a larger set that is open in $L^\infty(\Omega)$
and then work with this enlarged set as it is done, for example, in \cite{clason2018total}. However, this does not eliminate the main difficulties in our analysis, which arise from the fact that the differentiation at $u$ is performed with respect to $L^q(\Omega)$-norm, with $q < \infty$.
\end{remark}

If $X = L^p(\Omega)$ for $p \in [1,\infty)$, we write
$\nabla_R j(u)$ for the representative of $j'(u) \in X^*$ in $L^{p'}(\Omega)$, where $p'$ is the Hölder conjugate of $p$.

We recall that for an open set $\Omega$, $\calM(\Omega,\R^d)$ is
the space of regular Borel measures with finite variation
norm, which is the dual space of $C_0(\Omega,\R^d)$.

For an integrability index $s \in [1,\infty)$, we denote its Hölder conjugate index by
$s' = s/(s-1) \in (1,\infty]$.
By $W^{1,s}_{\tilde\Gamma}(\Omega)$ we denote the closed subspace of 
$W^{1,s}(\Omega)$ consisting of functions whose trace vanishes on $\tilde\Gamma$, and we let 
 $W^{-1,s}(\Omega)$ be the dual of $W^{1,s'}_{\tilde\Gamma}(\Omega)$.
We use $\|\cdot\|_{W^{1,s}_{\tilde\Gamma}(\Omega)} = \|\cdot\|_{W^{1,s}(\Omega)}$ as the norm on $W^{1,s}_{\tilde\Gamma}(\Omega)$.
In the interest of a clean exposition, we often drop the domain of a space
when stating the Lebesgue or Sobolev norm in $\Omega$.

\subsection{Analysis of State Equation}\label{S2.1}
Recall our state equation
\begin{equation}\label{eq:pde}
\begin{cases}
    - \dvg(u \nabla y) = f & \text{ in } \Omega\\
y = f_D & \text{ on } \tilde{\Gamma}\\
u \nabla y \cdot \nu = 0 &\text{ on } \Gamma
\end{cases}
\end{equation}
with $\Omega$, $\Gamma$, $\tilde{\Gamma}$ satisfying the assumptions listed in  \cref{sec:introduction}. For some $s > 2$ that will be asserted below in \cref{prp-new:elementary_properties}
and \cref{rem:index_s}, we will mainly consider solutions
in the state space $W^{1,s}_{\tilde\Gamma}(\Omega)$ and the data $f$ in the
space $W^{-1,s}(\Omega)$. In order to homogenize the Dirichlet boundary condition in \eqref{eq:pde}, we assume that 
$f_D \in  W^{1-1/s,s}(\tilde{\Gamma})$ and consider the
Laplace equation with mixed boundary conditions 
\begin{equation}\label{eq:Dirichlet_map}
    \begin{cases}
     -\Delta w = 0 &\text{ in } \Omega \\
w = f_D &\text{ on } \tilde{\Gamma} \\
\nabla w \cdot \nu = 0 &\text{ on }\Gamma   
    \end{cases}
\end{equation}

We recall the following wellposedness result, which can be found in Theorem 7.36.5 in \cite{medkova2018laplace}.
\begin{proposition}\label{prp:Dirichlet_map}
Let $p > 1$ and $f_D \in W^{1-1/p,p}(\tilde{\Gamma})$.
Under the domain assumptions stated in
\cref{sec:introduction},
system
\eqref{eq:Dirichlet_map} has a unique solution $\tilde{f}_D \in W^{1,p}(\Omega)$ satisfying
\begin{equation}\label{est-eq_from19}
    \|\tilde{f}_D\|_{W^{1,p}}
   \le C(p) \|f_D\|_{W^{1-1/p,p}(\tilde\Gamma)},
\end{equation}
for some constant $C(p) > 0$ that depends on $\Omega$, $\Gamma$, $\tilde\Gamma$,
and $p$.
\end{proposition}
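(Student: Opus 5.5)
The statement is a cited wellposedness result (Theorem 7.36.5 in \cite{medkova2018laplace}), so the plan is to reduce it to a standard lifting-plus-mixed-problem argument and to identify where the geometric assumptions enter.

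\emph{Reduction to homogeneous Dirichlet data.} Since $f_D \in W^{1-1/p,p}(\tilde\Gamma)$ and $\tilde\Gamma$ is a closed, non-negligible part of the $C^{1,\alpha}$ boundary, we first extend $f_D$ to a function $g \in W^{1-1/p,p}(\partial\Omega)$ with a norm bound. This uses a bounded extension operator from $\tilde\Gamma$ to $\partial\Omega$; such an operator exists because the interface between $\Gamma$ and $\tilde\Gamma$ is Lipschitz in the sense of Gröger. We then apply the bounded right inverse of the trace operator to obtain $G \in W^{1,p}(\Omega)$ with $G|_{\partial\Omega} = g$ and $\|G\|_{W^{1,p}} \le C\|f_D\|_{W^{1-1/p,p}(\tilde\Gamma)}$. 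We seek $\tilde f_D = G + v$, where $v \in W^{1,p}_{\tilde\Gamma}(\Omega)$ solves the weak problem
\[
\int_\Omega \nabla v\cdot\nabla \varphi \dd x = -\int_\Omega \nabla G\cdot\nabla\varphi \dd x \quad\text{for all } \varphi \in W^{1,p'}_{\tilde\Gamma}(\Omega).
\]
The right-hand side defines an element of $W^{-1,p}(\Omega)$ with norm at most $\|\nabla G\|_{L^p}$. The Neumann condition on $\Gamma$ is built into this weak formulation.

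\emph{Isomorphism property of the mixed Laplacian.} The core step is to show that $-\Delta : W^{1,p}_{\tilde\Gamma}(\Omega) \to W^{-1,p}(\Omega)$ is an isomorphism.
\begin{itemize}
\item For $p = 2$, this follows from Lax--Milgram together with Poincaré's inequality on $W^{1,2}_{\tilde\Gamma}$, which holds because $\tilde\Gamma$ has positive measure.
\item For $p$ near $2$, Gröger's theorem \cite{groger1989aw} extends the isomorphism to $p \in (2-\varepsilon, 2+\varepsilon)$.
\item For general $p > 1$, we rely on the $C^{1,\alpha}$ boundary and the constant coefficients. Localization with a partition of unity splits the analysis into three kinds of points: interior points and pure Dirichlet or Neumann boundary points, which are handled by classical $L^p$ theory (Agmon--Douglis--Nirenberg type); and points on the interface between $\Gamma$ and $\tilde\Gamma$, where the mixed-problem layer potential analysis of \cite{medkova2018laplace} is used.
\end{itemize}
Injectivity for $p<2$ follows by duality from surjectivity for $p'>2$, and vice versa. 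Uniqueness of $\tilde f_D$ then follows from injectivity, and the estimate \eqref{est-eq_from19} follows by combining the bound on $v$ with that on $G$.

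\emph{Main obstacle.} I expect the main difficulty to be the $W^{1,p}$ estimate near the Dirichlet--Neumann interface for $p$ far from $2$. In general mixed problems this fails: for example, in two dimensions a singular function behaving like $r^{1/2}$ appears, so $\nabla v \sim r^{-1/2}$ and $\nabla v \in L^p$ only for $p<4$. To get any $p>1$ one must use the specific hypotheses of Theorem 7.36.5 in \cite{medkova2018laplace} and apply that theorem directly rather than reprove it. For the paper's purposes it suffices to have the result for some $s>2$, and this weaker version already follows from Gröger's theorem combined with the lifting step above.
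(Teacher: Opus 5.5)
The paper gives no argument for this proposition. It only quotes Theorem 7.36.5 of \cite{medkova2018laplace}, so your proposal has to stand on its own. Its third bullet, the case of general $p>1$ via localization at the Dirichlet--Neumann interface, is a genuine gap that no appeal to the cited theorem can close.

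Your own singular-function remark shows that the assertion fails outside an interval around $2$ under exactly the domain assumptions of \cref{sec:introduction}. Take $\Omega$ the unit disk and $\tilde\Gamma$, $\Gamma$ the closed upper and lower semicircles; the boundary is smooth and the configuration is Gr\"oger-regular.
\begin{itemize}
\item \emph{Existence fails for $p\ge 4$.} For generic smooth $f_D$, the solution of \eqref{eq:Dirichlet_map} contains a nonzero multiple of $r^{1/2}\sin(\theta/2)$ in local polar coordinates at an interface point. The coefficient is a nontrivial continuous linear functional of $f_D$. Hence $\tilde f_D\notin W^{1,p}(\Omega)$ for $p\ge 4$.
\item \emph{Uniqueness fails for $p<4/3$.} Let $\zeta=i(1-z)/(1+z)$, which maps the disk onto the upper half-plane, the upper semicircle to the positive real axis and the lower one to the negative real axis. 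The function $\operatorname{Im}\zeta^{-1/2}$ is harmonic, vanishes on $\tilde\Gamma$, has zero normal derivative on $\Gamma$, and has gradient of order $r^{-3/2}$ at $z=1$. It therefore lies in $W^{1,p}(\Omega)$ and is a nonzero weak solution with zero data, tested against $W^{1,p'}_{\tilde\Gamma}(\Omega)$, for every $p<4/3$. This is also where your duality step, ``injectivity for $p<2$ from surjectivity for $p'>2$'', breaks down.
\end{itemize}
So your closing claim, that the specific hypotheses of \cite{medkova2018laplace} deliver every $p>1$, is unfounded. Nothing contained in the proposition excludes this example. Even for smooth interfaces one can at best expect $4/3<p<4$.

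What survives is everything the paper actually uses. Your lifting step, Lax--Milgram for $p=2$, and Gr\"oger's isomorphism theorem together give a complete, self-contained proof of existence, uniqueness and \eqref{est-eq_from19} for all $p\in[2,s]$. For Gr\"oger's theorem, apply the isomorphism property from the proof of \cref{prp-new:elementary_properties} to a constant coefficient in $[u_\ell,u_u]$. That property comes directly from Gr\"oger and does not depend on the present proposition.

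The paper invokes \eqref{est-eq_from19} only for $p=2$ in \cref{prop:Fu_bound} and for $p=s$ (implicitly $\sigma\in[2,s]$) in \cref{prop:Fu_W-1-s} and \cref{prp-new:elementary_properties}. Rather than deferring to the citation for all $p>1$, state the proposition for such an interval around $2$ and prove it by the Gr\"oger-based argument you sketch at the end. That version is both true and sufficient.
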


Using Proposition \cref{prp:Dirichlet_map} with $p = s$, we know that there exists a unique solution $\tilde f_D \in   W^{1,s}(\Omega)$ of \eqref{eq:Dirichlet_map}. 
Then $y$ is a solution for \eqref{eq:pde} if and only if 
$\displaystyle \tilde y \coloneqq y - \tilde f_D$ solves the boundary value problem for given $f \in W^{-1,s}(\Omega)$:
\begin{equation}\label{eq:pde_homogenized}
    \begin{cases}
- \dvg(u \nabla \tilde y)
= f + \dvg(u\nabla \tilde f_D)
& \text{ in }\Omega\\
\tilde y = 0
&\text{ on }\tilde\Gamma\\
u \nabla \tilde y\cdot\nu
= 0
&\text{ on }\Gamma
    \end{cases}
\end{equation}

We can equivalently rewrite system  \eqref{eq:pde_homogenized}  
as
\[ E(\tilde y,u)=0, \qquad \text{with} \ 
E:W^{1,s}_{\tilde\Gamma}(\Omega)\times \feas\to W^{-1,s}(\Omega), \qquad
E(\tilde y,u)\coloneqq A(u)\tilde y - F_u,
\]
with  operator $A(u) : W^{1,s}_{\tilde\Gamma}(\Omega) \to W^{-1,s}(\Omega)$
 defined by $
\langle A(u)\tilde y , v\rangle
\coloneqq \int_\Omega u\nabla \tilde y\cdot\nabla v\,dx
\; \text{ for all }v \in W^{1,s'}_{\tilde\Gamma}(\Omega)
$
and $F_u$ is given by
\begin{gather}\label{eq:Fu_definition}
\langle F_u , v\rangle
\coloneqq
\langle f, v\rangle
- \int_\Omega u\nabla\tilde f_D\cdot\nabla v\,dx
\quad\text{ for all }v \in W^{1,s'}_{\tilde\Gamma}(\Omega).
\end{gather}

We have the following properties and estimates for the norm of $F_u$ introduced above. 

\begin{proposition}[Special case: $s = 2$]\label{prop:Fu_bound}
Let $f \in W^{-1,2}(\Omega)$ and $f_D \in W^{1/2,2}(\tilde{\Gamma})$.  
Then for all $u \in \feas$, the functional $F_u \in  W^{-1,2}(\Omega)$, and we have the following estimate on its norm:
\begin{equation}\label{eq:Fu_estimate_final}
\|F_u\|_{W^{-1,2}}
\le C(\|f\|_{W^{-1,2}}
+ \|f_D\|_{W^{1/2,2}(\tilde\Gamma)}),
\end{equation}
where $C$ depends on $u_u$, $\Omega$, $\Gamma$, and $\tilde{\Gamma}$.
\end{proposition}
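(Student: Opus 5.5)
We argue directly from the definition \eqref{eq:Fu_definition} with $s = 2$, so that $s' = 2$ and test functions range over $W^{1,2}_{\tilde\Gamma}(\Omega)$. Fix $u \in \feas$ and $v \in W^{1,2}_{\tilde\Gamma}(\Omega)$. We bound the two terms separately. The first satisfies $|\langle f, v\rangle| \le \|f\|_{W^{-1,2}}\|v\|_{W^{1,2}}$ by the definition of $W^{-1,2}(\Omega)$ as the dual of $W^{1,2}_{\tilde\Gamma}(\Omega)$. For the second we use $0 < u_\ell \le u \le u_u$ a.e.\ together with the Cauchy--Schwarz inequality:
\[
\Big|\int_\Omega u \nabla \tilde f_D\cdot \nabla v\,dx\Big| \le u_u \|\nabla \tilde f_D\|_{L^2}\|\nabla v\|_{L^2} \le u_u \|\tilde f_D\|_{W^{1,2}}\|v\|_{W^{1,2}}.
\]
This also shows that the map $v \mapsto \langle F_u, v\rangle$ is linear and bounded on $W^{1,2}_{\tilde\Gamma}(\Omega)$, so $F_u \in W^{-1,2}(\Omega)$.

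Next we estimate $\tilde f_D$. We invoke \cref{prp:Dirichlet_map} with $p = 2$. This is admissible because $f_D \in W^{1/2,2}(\tilde\Gamma) = W^{1-1/2,2}(\tilde\Gamma)$. It gives existence of $\tilde f_D \in W^{1,2}(\Omega)$ with $\|\tilde f_D\|_{W^{1,2}} \le C(2)\|f_D\|_{W^{1/2,2}(\tilde\Gamma)}$, where $C(2)$ depends only on $\Omega$, $\Gamma$, $\tilde\Gamma$.

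Combining the two bounds and taking the supremum over $\|v\|_{W^{1,2}} \le 1$ yields
\[
\|F_u\|_{W^{-1,2}} \le \|f\|_{W^{-1,2}} + u_u C(2)\|f_D\|_{W^{1/2,2}(\tilde\Gamma)}.
\]
This is \eqref{eq:Fu_estimate_final} with $C = \max\{1, u_u C(2)\}$, a constant depending only on $u_u$, $\Omega$, $\Gamma$, $\tilde\Gamma$ and, in particular, uniform in $u \in \feas$.

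There is no real obstacle here. The only point requiring care is that the lifting $\tilde f_D$ is fixed independently of $u$, so that the only $u$-dependence enters through the pointwise bound $u \le u_u$. A second point is that the $W^{1,2}$ norm on $W^{1,2}_{\tilde\Gamma}(\Omega)$ is the full norm, as fixed in the notation, so no Poincaré constant is needed.
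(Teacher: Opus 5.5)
Your proposal is correct and follows the paper's proof essentially verbatim: a duality bound for $\langle f,v\rangle$, Cauchy--Schwarz with $\|u\|_{L^\infty}\le u_u$ for the lifting term, and the estimate of \cref{prp:Dirichlet_map} with $p=2$ for $\|\tilde f_D\|_{W^{1,2}}$. The resulting constant $\max\{1,u_u C(2)\}$ is exactly the one implicit in the paper.
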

\begin{proof}
Let $v\in W^{1,2}_{\tilde\Gamma}(\Omega)$. Using the fact that $u\in L^\infty(\Omega)$ and estimate \eqref{est-eq_from19} with $p=2$, we obtain
\[ |\langle F_u , v\rangle | \leq \|f\|_{W^{-1,2}}\|v\|_{W^{1,2}}+ 
\|u\|_{L^\infty}\|\nabla\tilde f_D\|_{L^2}\|\nabla v\|_{L^2}
\le (\|f\|_{W^{-1,2}} +
u_uC_1 \|f_D\|_{W^{1/2,2}(\tilde\Gamma)})\|v\|_{W^{1,2}},
\]
which provides the desired estimate  \eqref{eq:Fu_estimate_final} on the norm of $F_u$.
\end{proof}

\begin{proposition}\label{prop:Fu_W-1-s}
Let $s>2$ be the elliptic regularity exponent from
\cite{groger1989aw} and \cite{clason2018total}, and assume
$f \in W^{-1,s}(\Omega)$, $f_D \in W^{1-1/s,s}(\tilde\Gamma)$.
Then, for all $u\in\feas$, the functional $F_u \in  W^{-1,s}(\Omega)$ and satisfies
\begin{equation}\label{eq:Fu_W-1-s_estimate}
\|F_u\|_{W^{-1,s}}
\le
C(s)\bigl(\|f\|_{W^{-1,s}} + \|f_D\|_{W^{1-1/s,s}(\tilde\Gamma)}\bigr)
\end{equation}
for some $C(s) > 0$ that depends on $u_u$, $\Omega$, $\Gamma$, $\tilde{\Gamma}$, and $s$.
\end{proposition}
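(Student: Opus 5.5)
The proof follows the template of \cref{prop:Fu_bound}, with the Hilbert-space pairing replaced by the $W^{1,s}$--$W^{1,s'}$ duality and Cauchy--Schwarz replaced by H\"older's inequality with exponents $s$ and $s'$. We fix an arbitrary test function $v \in W^{1,s'}_{\tilde\Gamma}(\Omega)$ and bound the two terms in the definition \eqref{eq:Fu_definition} of $\langle F_u, v\rangle$ separately.

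For the first term, $f \in W^{-1,s}(\Omega)$ is by definition an element of the dual of $W^{1,s'}_{\tilde\Gamma}(\Omega)$. Hence
\[
|\langle f, v\rangle| \le \|f\|_{W^{-1,s}}\|v\|_{W^{1,s'}}.
\]

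For the second term, we first apply \cref{prp:Dirichlet_map} with $p = s$. This is admissible because $f_D \in W^{1-1/s,s}(\tilde\Gamma)$, and it gives $\tilde f_D \in W^{1,s}(\Omega)$ with
\[
\|\tilde f_D\|_{W^{1,s}} \le C(s)\|f_D\|_{W^{1-1/s,s}(\tilde\Gamma)}.
\]
Since $u \in \feas$, we have $\|u\|_{L^\infty} \le u_u$. Applying H\"older's inequality with exponents $s$ and $s'$ then yields
\[
\Bigl|\int_\Omega u \nabla \tilde f_D\cdot\nabla v\,dx\Bigr|
\le u_u \|\nabla \tilde f_D\|_{L^s}\|\nabla v\|_{L^{s'}}
\le u_u C(s)\|f_D\|_{W^{1-1/s,s}(\tilde\Gamma)}\|v\|_{W^{1,s'}}.
\]

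Adding the two bounds shows that $v \mapsto \langle F_u, v\rangle$ is a linear functional on $W^{1,s'}_{\tilde\Gamma}(\Omega)$. Its linearity is clear from \eqref{eq:Fu_definition}, and the combined bound shows it is bounded, with operator norm at most
\[
\|f\|_{W^{-1,s}} + u_u C(s)\|f_D\|_{W^{1-1/s,s}(\tilde\Gamma)}.
\]
This proves $F_u \in W^{-1,s}(\Omega)$ together with \eqref{eq:Fu_W-1-s_estimate}, with constant $\max\{1, u_u C(s)\}$. The constant depends only on $u_u$, $s$, and, through $C(s)$, on $\Omega$, $\Gamma$, $\tilde\Gamma$; in particular it is uniform in $u \in \feas$.

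None of these steps is a real obstacle; the only point requiring care is to invoke \cref{prp:Dirichlet_map} at the exponent $p=s$ rather than $p=2$. The regularity exponent $s>2$ from Gr\"oger plays no role in this estimate itself: the argument works for any $s>1$. The specific choice of $s$ only becomes relevant later, when $F_u$ is used as data for the $W^{1,s}$-solvability of $A(u)$.
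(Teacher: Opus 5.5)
Your proposal is correct and matches the paper's proof. Both bound $\langle f,v\rangle$ by duality, and bound the lifting term by H\"older's inequality with exponents $s$ and $s'$, using $\|u\|_{L^\infty}\le u_u$ and the Dirichlet-lift estimate at $p=s$.
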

\begin{proof}
Let $v\in W^{1,s'}_{\tilde\Gamma}(\Omega)$. Using the fact that $u\in L^\infty(\Omega)$ and estimate \eqref{est-eq_from19} with $p=s$, we obtain
\[ |\langle F_u , v\rangle | \leq \|f\|_{W^{-1,s}}\|v\|_{W^{1,s'}_{\tilde\Gamma}}+ C(s)
\|u\|_{L^\infty}\|f_D\|_{W^{1-1/s,s}(\tilde\Gamma)}\|v\|_{W^{1,s'}_{\tilde\Gamma}},
\]
which provides the desired estimate  \eqref{eq:Fu_W-1-s_estimate} on the norm of $F_u$. 
\end{proof}

\subsection{Control-to-state Operator}
Using the setup introduced in Section \ref{S2.1}, the control-to-state operator of BVP \eqref{eq:pde}
is given by
\begin{equation}\label{dfnS}
S:\feas\to W^{1,2}(\Omega),
\qquad 
S(u) \coloneqq \tilde f_D + A(u)^{-1}F_u,
\end{equation}
where we used the fact that the operator  $A(u):W^{1,2}_{\tilde\Gamma}(\Omega)\to W^{-1,2}(\Omega) $
is invertible by standard elliptic regularity in the case where  a homogeneous Dirichlet
condition is imposed on a $\calH^{d-1}$-non-negligible subset
of $\partial \Omega$ (see, e.g.,
\S1.2.6 and \S1.3.2 in \cite{necas2011direct}). In fact, we obtain improved regularity  due to the fact that $A(u)$ 
actually maps  $W^{1,s}_{\tilde\Gamma}(\Omega)$ onto 
$W^{-1,s}(\Omega)$, for some $s > 2$. To this end, we recall the following results on the operators $A(u)$ and $S$ from \cite{clason2018total}; see, in particular
Proposition 2.3 therein; which themselves are based on the elliptic regularity theory provided in \cite{groger1989aw}. Since we need them
here for our mixed boundary condition, we state them again and provide a brief proof below.
\begin{proposition}\label{prp-new:elementary_properties}
Let $f\in W^{-1,2}(\Omega)$, $f_D\in W^{1/2,2}(\tilde\Gamma)$. Then:
\begin{enumerate}
\item There exists $C>0$ that depends on $u_u$, $\Omega$,
$\Gamma$, and $\tilde{\Gamma}$ such that for all $u \in \feas$:
\[
\|S(u)\|_{W^{1,2}} \le C\bigl(\|f_D\|_{W^{1/2,2}(\tilde\Gamma)} + 
\|f\|_{W^{-1,2}}\bigr).
\]
\item There exists $L>0$ such that for all $u_1$, $u_2 \in \feas,$ $\|S(u_1)-S(u_2)\|_{W^{1,2}} \le L\|u_1-u_2\|_{L^\infty}. $\end{enumerate}
Moreover, there exist $s>2$ and $[2,s] \ni \sigma \mapsto C(\sigma) \in [0,\infty)$ (that depends on $u_u$, $\Omega$, $\Gamma$, $\tilde{\Gamma}$, and $s$)
such that for all $\sigma \in [2,s]$ with $f_D\in W^{1-1/\sigma,\sigma}(\tilde\Gamma)$, $f \in W^{-1,\sigma}(\Omega)$, and all $u \in \feas$:
\begin{gather}\label{eq:w1s_estimate}
    \|S(u)\|_{W^{1,\sigma}}
\le C(\sigma)\bigl(\|f_D\|_{W^{1-1/\sigma,\sigma}(\tilde{\Gamma})} + \|f\|_{W^{-1,\sigma}}\bigr)
\end{gather}
\end{proposition}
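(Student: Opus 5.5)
The plan is to reduce all three claims to estimates for the homogenized problem \eqref{eq:pde_homogenized}, i.e.\ for $\tilde y = A(u)^{-1}F_u$. The bounds on $\tilde f_D$ come from \cref{prp:Dirichlet_map}, and the bounds on $F_u$ come from \cref{prop:Fu_bound} and \cref{prop:Fu_W-1-s}. Since $S(u) = \tilde f_D + \tilde y$, the triangle inequality then combines the two contributions.

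\emph{Item 1.} Test the weak form with $v = \tilde y \in W^{1,2}_{\tilde\Gamma}(\Omega)$. Using $u \ge u_\ell$ and a Poincaré--Friedrichs inequality on $W^{1,2}_{\tilde\Gamma}(\Omega)$, which is valid because $\tilde\Gamma$ is $\calH^{d-1}$-non-negligible, we get
\[
u_\ell c_P\|\tilde y\|_{W^{1,2}}^2 \le \langle F_u,\tilde y\rangle \le \|F_u\|_{W^{-1,2}}\|\tilde y\|_{W^{1,2}} .
\]
Hence $\|A(u)^{-1}\|_{W^{-1,2}\to W^{1,2}_{\tilde\Gamma}} \le (u_\ell c_P)^{-1}$ uniformly on $\feas$. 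Combining this with \eqref{eq:Fu_estimate_final} and \eqref{est-eq_from19} for $p=2$ gives the bound, with a constant depending on $u_\ell$ as well.

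\emph{Item 2.} Let $y_i = S(u_i)$. Since $\tilde f_D$ does not depend on $u$, subtracting the weak formulations of \eqref{eq:pde} gives, for all $v\in W^{1,2}_{\tilde\Gamma}(\Omega)$,
\[
\int_\Omega u_1\nabla(y_1-y_2)\cdot\nabla v\,dx = -\int_\Omega (u_1-u_2)\nabla y_2\cdot\nabla v\,dx .
\]
Note that $y_1-y_2\in W^{1,2}_{\tilde\Gamma}(\Omega)$. The right-hand side is a functional with $W^{-1,2}$-norm at most $\|u_1-u_2\|_{L^\infty}\|\nabla y_2\|_{L^2}$. Applying the uniform inverse bound from item 1 together with the uniform bound on $\|y_2\|_{W^{1,2}}$ yields $L$.

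\emph{The $W^{1,\sigma}$ estimate.} This is the main step, and it rests entirely on Gröger's theorem \cite{groger1989aw}. For our geometry (regular in Gröger's sense, as recalled in \cref{sec:introduction}) and for coefficients with values in $[u_\ell,u_u]$, there is $s>2$, depending only on $\Omega,\Gamma,\tilde\Gamma,u_\ell,u_u$, such that $A(u): W^{1,\sigma}_{\tilde\Gamma}(\Omega)\to W^{-1,\sigma}(\Omega)$ is a topological isomorphism for all $\sigma\in[2,s]$. Moreover, $\|A(u)^{-1}\|$ is bounded uniformly in $u$.

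The uniformity is the delicate point. Gröger's proof writes $A(u)$ as a perturbation of the duality map $-\Delta+\mathrm{id}$ (the mixed-boundary Laplacian). One uses that $\sigma\mapsto\|(-\Delta)^{-1}\|_{W^{-1,\sigma}\to W^{1,\sigma}}$ is continuous at $2$, which follows from the Riesz--Thorin/Šneiberg interpolation argument. A Neumann-series or contraction argument then gives an inverse whose norm depends only on $u_\ell/u_u$. This yields both the existence of $s$ and a function $C(\sigma)$ that can be taken bounded on $[2,s]$, after possibly shrinking $s$ slightly.

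I would cite this as in Proposition 2.3 of \cite{clason2018total} and only check that the mixed boundary setting fits Gröger's Definition 2. Given this, $\|\tilde y\|_{W^{1,\sigma}}\le C\|F_u\|_{W^{-1,\sigma}}$. Combining with \cref{prop:Fu_W-1-s} for $s$ replaced by $\sigma$ and with \cref{prp:Dirichlet_map} for $p=\sigma$ gives \eqref{eq:w1s_estimate}.
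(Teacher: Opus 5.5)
Your proposal is correct and follows essentially the same route as the paper. Items 1 and 2 are the standard Lax--Milgram/coercivity arguments, which the paper does not even spell out. The $W^{1,\sigma}$ estimate rests, as in the paper, on Gr\"oger's isomorphism theorem under the condition $M_\sigma k<1$ with $k=(1-u_\ell^2/u_u^2)^{1/2}$, kept valid for $\sigma$ near $2$ by interpolation (the paper cites Herzog et al.\ for this), followed by the lifting estimates for $\tilde f_D$ and $F_u$. Your remark that the constants must also depend on $u_\ell$ is correct and consistent with the paper's subsequent remark on the $u_\ell$-dependence of $s$.
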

\begin{proof}
The nonstandard assertion is the $W^{1,\sigma}$-regularity estimate. Let $
G:=\Omega\cup\Gamma$. 
By the assumptions listed in  \cref{sec:introduction}, the set $G$ is regular in the sense of Gröger \cite[Definition 2]{groger1989aw}. Hence Gröger's regularity theory for mixed boundary value problems applies. More precisely, following the proof of \cite[Theorem~1 and Lemma~1]{groger1989aw}, there exists an exponent $s>2$ such that, for every $\sigma\in[2,s]$ and every $u\in U_{\rm ad}$, the operator
$A(u): W^{1,\sigma}_{\widetilde\Gamma}(\Omega)\to W^{-1,\sigma}(\Omega)
$
is an isomorphism. In \cite{groger1989aw}, the admissible exponent $\sigma$ is characterized through a  condition of the form
$
M_\sigma k<1,$ 
where $M_\sigma$ denotes the norm of the inverse of a suitable  operator and
$
k=\left(1-{u_\ell^2}/{u_u^2}\right)^{1/2}.
$ Later, Herzog et al.~\cite[Theorem~1.1]{herzog2011integrability} showed that, since $M_2=1$ and $k<1$, the condition $M_\sigma k<1$ remains valid for all exponents $\sigma$ sufficiently close to $2$. In particular, there exists an interval $[2,s]$ on which the isomorphism property is valid for all $\sigma \in [2,s]$. For a detailed proof and a more general setting, we also
refer to \cite[Theorem 1.1]{groger1989aw}. 
\end{proof}

\begin{remark}
By inspecting the statement and proof of Theorem 1 in \cite{groger1989aw}, 
which is the center point for the arguments that lead to 
\cref{prp-new:elementary_properties}, we can 
see that the value of $s$ may depend on the specific value of the coercivity
constant, namely $u_\ell$. Consequently, every continuity and
differentiability analysis of $S$ that is based on this regularity must 
ensure that variations of feasible points remain feasible with respect to 
this lower bound on the control because otherwise the norm with respect to which
one can analyze the problem would change.
\end{remark}
\begin{remark}\label{rem:index_s} For the remainder of the paper, whenever we refer to the integrability index $s$, we mean the one asserted in \cref{prp-new:elementary_properties}.
\end{remark}

Next we show that \cref{prp-new:elementary_properties} implies the Lipschitz 
continuity for the solution operator 
$S : (\feas, \|\cdot\|_{L^{q}}) \to W^{1,2}(\Omega)$ for large enough values of $q$.
\begin{lemma}\label{lem:S_Lipschitz}
Let $f_D\in W^{1-1/s,s}(\tilde\Gamma)$,
$f \in W^{-1,s}(\Omega)$.
For all $q \ge \frac{2s}{s-2}$, the operator
$
S : (\feas,\|\cdot\|_{L^{q}}) \to W^{1,2}(\Omega)
$
is Lipschitz continuous with Lipschitz constant depending on $q$.
\end{lemma}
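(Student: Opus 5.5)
The plan is to compare the two states through their difference equation and then estimate the resulting right-hand side with Hölder's inequality. The higher regularity of \cref{prp-new:elementary_properties} is what allows $\|u_1-u_2\|_{L^\infty}$ to be replaced by an $L^q$-norm.

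Let $u_1,u_2\in\feas$ and set $y_i = S(u_i)$. Since both states have the same Dirichlet lift $\tilde f_D$, the difference $e := y_1-y_2 = \tilde y_1 - \tilde y_2$ lies in $W^{1,2}_{\tilde\Gamma}(\Omega)$. Subtracting the weak formulations $\int_\Omega u_i\nabla y_i\cdot\nabla v\,dx = \langle f,v\rangle$ for $v\in W^{1,2}_{\tilde\Gamma}(\Omega)$ gives
\[
\int_\Omega u_1 \nabla e\cdot\nabla v\,dx = -\int_\Omega (u_1-u_2)\nabla y_2\cdot\nabla v\,dx \quad\text{for all } v\in W^{1,2}_{\tilde\Gamma}(\Omega).
\]
Note that the terms involving $\tilde f_D$ combine exactly into $\nabla y_2$.

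We test with $v=e$ and use coercivity: $u_1\ge u_\ell$ together with the Poincaré/Friedrichs inequality on $W^{1,2}_{\tilde\Gamma}(\Omega)$, which holds because $\tilde\Gamma$ is non-negligible. This yields
\[
u_\ell c_P\|e\|_{W^{1,2}}^2 \le \int_\Omega |u_1-u_2|\,|\nabla y_2|\,|\nabla e|\,dx .
\]
Next we apply the generalized Hölder inequality with exponents $q$, $s$, $2$, which requires $\tfrac1q+\tfrac1s+\tfrac12 \le 1$. This is equivalent to $q\ge \tfrac{2s}{s-2}$, and since $\Omega$ is bounded, any larger $q$ only costs a factor $|\Omega|^{1/r}$ with $\tfrac1r = \tfrac12-\tfrac1s-\tfrac1q$. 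Dividing by $\|e\|_{W^{1,2}}$ gives
\[
\|e\|_{W^{1,2}} \le \frac{|\Omega|^{1/r}}{u_\ell c_P}\,\|\nabla y_2\|_{L^s}\,\|u_1-u_2\|_{L^q}.
\]

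It remains to bound $\|\nabla y_2\|_{L^s}$ uniformly over $\feas$. We use \eqref{eq:w1s_estimate} with $\sigma=s$, which is available because $f\in W^{-1,s}(\Omega)$ and $f_D\in W^{1-1/s,s}(\tilde\Gamma)$. This gives $\|y_2\|_{W^{1,s}}\le C(s)\bigl(\|f_D\|_{W^{1-1/s,s}(\tilde\Gamma)}+\|f\|_{W^{-1,s}}\bigr)$, with a constant independent of $u_2\in\feas$. This is the main point of the argument: the uniformity of $C(s)$ over $\feas$ is the only nontrivial input, and it is exactly what \cref{prp-new:elementary_properties} provides. The resulting Lipschitz constant depends on $q$ through $|\Omega|^{1/r}$, and also on $u_\ell$, $s$ and the data.

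A minor point to check is that testing with $e$ is legitimate. This holds because $e\in W^{1,2}_{\tilde\Gamma}(\Omega)$ and the right-hand side defines a bounded functional on that space, by the same Hölder estimate.
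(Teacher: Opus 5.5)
Your proposal is correct and follows the paper's proof essentially step by step. You subtract the state equations to get $\int_\Omega u_1\nabla(y_1-y_2)\cdot\nabla v\,dx=\int_\Omega (u_2-u_1)\nabla y_2\cdot\nabla v\,dx$, test with $y_1-y_2=\tilde y_1-\tilde y_2\in W^{1,2}_{\tilde\Gamma}(\Omega)$, apply Hölder together with the uniform $W^{1,s}$-bound from \cref{prp-new:elementary_properties}, and use the Poincaré inequality; the only cosmetic difference is that you fold the passage from $L^{2s/(s-2)}$ to $L^q$ into a four-factor Hölder inequality with the factor $|\Omega|^{1/r}$, which the paper leaves implicit as the embedding $L^q(\Omega)\hookrightarrow L^{2s/(s-2)}(\Omega)$.
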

\begin{proof}
Let $u_i \in\feas$ and set $y_i \coloneqq S(u_i)$,
$\tilde y_i \coloneqq y_i-\tilde f_D \in W^{1,s}_{\tilde\Gamma}(\Omega)$ for $i \in \{1,2\}$. Then $\tilde y_i$ solves
\[
\int_\Omega u_i \nabla \tilde y_i\cdot \nabla v\,dx = \langle F_{u_i}, v\rangle
\quad\text{ for all } v\in W^{1,s'}_{\tilde\Gamma}(\Omega)
\]
with $F_{u_i}$ defined in \eqref{eq:Fu_definition}. Subtracting the two equations yields the identity
\[
\int_\Omega u_1 \nabla(\tilde y_1-\tilde y_2)\cdot\nabla v\,dx
=
\langle F_{u_1}-F_{u_2}, v\rangle
+
\int_\Omega (u_2-u_1)\nabla \tilde y_2\cdot\nabla v\,dx ,
\]
which is equivalent to 
$
\int_\Omega u_1 \nabla(y_1-y_2)\cdot\nabla v\,dx
=
\int_\Omega (u_2-u_1)\nabla y_2\cdot\nabla v\,dx.
$
If we test with $v = \tilde{y}_1 - \tilde{y}_2$ we obtain
\[
\bigl(u_1 \nabla (y_1- y_2), \nabla(y_1-y_2)\bigr)_{L^2(\Omega)}
= 
\bigl((u_2-u_1)\nabla y_2,\nabla(y_1-y_2)\bigr)_{L^2(\Omega)}.
\]
Estimating the left-hand side from below and the right-hand side
from above implies
\begin{align*}
u_\ell\|\nabla(y_1 - y_2)\|_{L^2}^2
&\le \|\sqrt{u_1}\nabla(y_1 - y_2)\|_{L^2}^2 \le \|\nabla(y_1 - y_2)\|_{L^2} \|\nabla y_2\|_{L^s} \|u_1 - u_2\|_{L^{2s/(s-2)}} \\
&\le C_1(s,f_D,f)\|\nabla(y_1 - y_2)\|_{L^2} \|u_1 - u_2\|_{L^{2s/(s-2)}},
\end{align*}
where
$
C_1(s,f_D,f) \coloneqq
C(s)\bigl(\|f\|_{W^{-1,s}(\Omega)} + \|f_D\|_{W^{1-1/s,s}(\tilde\Gamma)}\bigr)$. 
From the definition of $S$ in \eqref{dfnS}, we have that $y_1 - y_2 =
\bigl(A(u_1)^{-1}F_{u_1}-A(u_2)^{-1}F_{u_2}\bigr)
\in W^{1,2}_{\tilde\Gamma}(\Omega),$ 
and hence, by the Poincar\'e inequality on $W^{1,2}_{\tilde\Gamma}(\Omega)$
with constant $C_2(\Omega,\tilde\Gamma)$ (see, e.g., Theorem 1.9 in \cite{necas2011direct}) we obtain that
\[
\|y_1 - y_2\|_{L^2}
=
\|\tilde{y}_1 - \tilde{y}_2\|_{L^2}
\le
C_2(\Omega,\tilde\Gamma)\|\nabla(\tilde{y}_1-\tilde{y}_2)\|_{L^2}
= C_2(\Omega,\tilde\Gamma)\|\nabla(y_1-y_2)\|_{L^2},
\]
which yields the desired Lipschitz continuity.
\end{proof}
\begin{remark}
Since in practice the admissible $s>2$ is usually close to $2$,
the condition $q\ge\frac{2s}{s-2}$ may force $q$ to be rather large.
\end{remark}

\subsection{Adjoint Equation}

General adjoint calculus (formally) gives (under sufficient differentiability assumptions):
\[ (F \circ S)'(u) = -
F'(S(u))
\bigl(\partial_1 E(S(u)-\tilde f_D,u)\bigr)^{-1}\,
\partial_2 E(S(u)-\tilde f_D,u).
\]
Since
$(\partial_1 E(S(u)-\tilde f_D,u))^* = A(u)$
must hold when both sides are well defined, we can introduce the adjoint 
state $p = p(y,u) \in W^{1,2}_{\tilde{\Gamma}}(\Omega)$ for $y = S(u)$
as the solution to
$ A(u)p = -F'(y) \in W^{-1,2}(\Omega), $
which in variational form is given by
\[
\int_\Omega u\,\nabla p\cdot\nabla v\,\dd x
= \langle - F'(y), v\rangle
\quad\text{ for all }v\in W^{1,2}_{\tilde{\Gamma}}(\Omega),
\]
and corresponds to the mixed boundary value problem
\begin{equation}\label{eq:adjoint}
\begin{cases}
\operatorname{div}(u\nabla p) = - F'(y)
& \text{in }\Omega\\
p = 0 &\text{on }\tilde\Gamma\\
u \nabla p\cdot\nu = 0 &\text{on }\Gamma.
\end{cases}
\end{equation}
We note that the regularity considerations for the state
equation also apply here and yield a solution
$p\in W^{1,s}_{\tilde\Gamma}(\Omega)$ for the same exponent $s>2$ as in \cref{prp-new:elementary_properties}, 
if we assume that $-F'(y) \in W^{-1,s}(\Omega)$ holds. Thus, the implied adjoint solution
operator
\[
S_p : W^{1,s} (\Omega) \times \feas  \to  W^{1,s}_{\tilde\Gamma}(\Omega),\ \  S_p(y,u) = p
\]
is well defined and bounded provided that $-F'(y)$ is uniformly
bounded in $W^{-1,s}(\Omega)$ for $y \in \{ S(u) : u \in \feas\}$.

\begin{lemma}\label{lem:SpLipschitz}
Let $F: W^{1,2}(\Omega)\to \R$ be continuously differentiable
with  Lipschitz continuous derivative $F' : W^{1,2}(\Omega) \to W^{-1,2}(\Omega)$
and $\sup\{ \|F'(S(u))\|_{W^{-1,s}} : u \in \feas\} < \infty$.
Then the following properties hold:
\begin{enumerate}
\item For all $q \ge \frac{2s}{s-2}$ and all $y$ with $-F'(y) \in W^{-1,s}(\Omega)$, the map $ 
S_p(y,\cdot): (\feas, \|\cdot\|_{L^q}) \to W^{1,2}_{\tilde\Gamma}(\Omega)$ 
is Lipschitz continuous with Lipschitz constant depending on $q$,
$\|F'(y)\|_{W^{-1,s}}$, and the coercivity constant $u_\ell$.

\item For all $u \in \feas$, the map $
S_p(\cdot,u) : W^{1,2}(\Omega) \to W^{1,2}_{\tilde\Gamma}(\Omega)$
is Lipschitz continuous.
\end{enumerate}
\end{lemma}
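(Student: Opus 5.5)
The argument mirrors the proof of \cref{lem:S_Lipschitz}, with the adjoint equation \eqref{eq:adjoint} in place of the state equation.

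\emph{Part 1.} Fix $y$ with $-F'(y)\in W^{-1,s}(\Omega)$ and $u_1,u_2\in\feas$, and set $p_i \coloneqq S_p(y,u_i)$. The regularity remark after \eqref{eq:adjoint}, i.e.\ \cref{prp-new:elementary_properties} applied with zero Dirichlet data and right-hand side $-F'(y)$, gives $p_i\in W^{1,s}_{\tilde\Gamma}(\Omega)$ with
\[
\|p_i\|_{W^{1,s}}\le C(s)\,\|F'(y)\|_{W^{-1,s}} .
\]
This requires that the isomorphism $A(u)\colon W^{1,s}_{\tilde\Gamma}(\Omega)\to W^{-1,s}(\Omega)$ be uniform over $\feas$, and \cref{prp-new:elementary_properties} provides this.

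Subtracting the two variational equations, which share the same right-hand side, gives for all admissible $v$
\[
\int_\Omega u_1\nabla(p_1-p_2)\cdot\nabla v\,dx=\int_\Omega (u_2-u_1)\nabla p_2\cdot\nabla v\,dx .
\]
Testing with $v=p_1-p_2\in W^{1,2}_{\tilde\Gamma}(\Omega)$ is legitimate: the left side is well defined for $W^{1,2}$ test functions, and so is the right side because $\nabla p_2\in L^s$. Using coercivity $u_1\ge u_\ell$ and the generalized Hölder inequality with exponents $\frac1{2}+\frac1s+\frac{s-2}{2s}=1$, we get
\[
u_\ell\|\nabla(p_1-p_2)\|_{L^2}^2\le \|u_1-u_2\|_{L^{2s/(s-2)}}\|\nabla p_2\|_{L^s}\|\nabla(p_1-p_2)\|_{L^2}.
\]
Dividing by $\|\nabla(p_1-p_2)\|_{L^2}$ and applying the Poincaré inequality on $W^{1,2}_{\tilde\Gamma}(\Omega)$ yields
\[
\|p_1-p_2\|_{W^{1,2}}\le C\,u_\ell^{-1}C(s)\,\|F'(y)\|_{W^{-1,s}}\,\|u_1-u_2\|_{L^{2s/(s-2)}} .
\]
Finally, $\Omega$ is bounded and $q\ge 2s/(s-2)$, so $\|\cdot\|_{L^{2s/(s-2)}}\le |\Omega|^{\frac{s-2}{2s}-\frac1q}\|\cdot\|_{L^q}$. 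This gives a Lipschitz constant depending on $q$, $\|F'(y)\|_{W^{-1,s}}$ and $u_\ell$, as claimed.

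\emph{Part 2.} Fix $u$ and let $y_1,y_2\in W^{1,2}(\Omega)$. The map $S_p(\cdot,u)$ is the composition $y\mapsto -F'(y)\mapsto A(u)^{-1}(-F'(y))$, where $A(u)^{-1}\colon W^{-1,2}(\Omega)\to W^{1,2}_{\tilde\Gamma}(\Omega)$ is linear and bounded by Lax--Milgram. Indeed, testing $A(u)(p_1-p_2)=-(F'(y_1)-F'(y_2))$ with $p_1-p_2$ and using Poincaré gives
\[
\|p_1-p_2\|_{W^{1,2}}\le C u_\ell^{-1}\|F'(y_1)-F'(y_2)\|_{W^{-1,2}}\le C u_\ell^{-1}L_{F'}\|y_1-y_2\|_{W^{1,2}} .
\]
Here the second inequality is the assumed Lipschitz continuity of $F'$.

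One point needs care: $W^{-1,2}(\Omega)$ is the dual of $W^{1,2}_{\tilde\Gamma}(\Omega)$, so $F'(y)$ must be understood as restricted to $W^{1,2}_{\tilde\Gamma}(\Omega)$; this restriction is norm-nonincreasing. The only genuine obstacle is in Part 1, namely justifying the $W^{1,s}$ bound on $p_2$ uniformly in $u_2\in\feas$, and that is supplied by \cref{prp-new:elementary_properties}.
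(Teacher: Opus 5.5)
Your proposal is correct and follows the paper's route. The paper only says that Part 1 follows by the argument of \cref{lem:S_Lipschitz} (subtract the adjoint equations, test with $p_1-p_2$, use H\"older with a uniform $W^{1,s}$-bound on $p_2$ and $L^q(\Omega)\hookrightarrow L^{2s/(s-2)}(\Omega)$) and that Part 2 follows from the Lipschitz continuity of $F'$ together with \cref{prp-new:elementary_properties}; you fill in details it leaves implicit, namely the uniformity over $\feas$ of the $W^{1,s}$-bound and the restriction of $F'(y)$ to $W^{1,2}_{\tilde\Gamma}(\Omega)$.
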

\begin{proof}
The first claim follows from arguments similar to those in \cref{lem:S_Lipschitz}.
The second claim follows from the assumed Lipschitz continuity of $F'$
and \cref{prp-new:elementary_properties}.
\end{proof}
The next corollary follows directly from \cref{lem:SpLipschitz}.
\begin{corollary}\label{cor:Sp_JointlyLipschitz}
Let $F: W^{1,2}(\Omega)\to \R$ be continuously differentiable
with  Lipschitz continuous derivative $F' : W^{1,2}(\Omega) \to W^{-1,2}(\Omega)$
and $\sup\{ \|F'(S(u))\|_{W^{-1,s}} : u \in \feas\} < \infty$.
For all $q \ge \frac{2s}{s-2}$, the mapping $(\feas,\|\cdot\|_{L^q}) \ni u \mapsto S_p(S(u), u) \in W^{1,2}_{\tilde\Gamma}(\Omega)$ 
is Lipschitz continuous with Lipschitz constant depending on $q$, $s$, $u_\ell$, and the data.  
\end{corollary}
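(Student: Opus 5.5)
The plan is the triangle-inequality splitting suggested by \cref{lem:SpLipschitz}. For $u_1,u_2\in\feas$ set $y_i \coloneqq S(u_i)$ and write
\[
S_p(y_1,u_1) - S_p(y_2,u_2) = \bigl(S_p(y_1,u_1) - S_p(y_1,u_2)\bigr) + \bigl(S_p(y_1,u_2) - S_p(y_2,u_2)\bigr).
\]
The first difference is handled by item 1 of \cref{lem:SpLipschitz} with $y = y_1$. The hypothesis $\sup\{ \|F'(S(u))\|_{W^{-1,s}} : u \in \feas\} < \infty$ guarantees $-F'(y_1)\in W^{-1,s}(\Omega)$ with a bound independent of $u_1$. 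Hence the Lipschitz constant of $S_p(y_1,\cdot)$ with respect to $\|\cdot\|_{L^q}$, which depends only on $q$, $\|F'(y_1)\|_{W^{-1,s}}$, and $u_\ell$, is uniform over $u_1 \in \feas$. Making this uniformity explicit will be the one point that needs care.

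Concretely, I would reproduce the argument of \cref{lem:S_Lipschitz} for $p_i = S_p(y_1,u_i)$. Subtracting the two adjoint equations gives
\[
\int_\Omega u_1\nabla(p_1-p_2)\cdot\nabla v\,\dd x = \int_\Omega (u_2-u_1)\nabla p_2\cdot\nabla v\,\dd x .
\]
Testing with $v=p_1-p_2$ and applying H\"older's inequality with exponents $2$, $s$, and $2s/(s-2)$ yields
\[
u_\ell\|\nabla(p_1-p_2)\|_{L^2} \le \|\nabla p_2\|_{L^s}\,\|u_1-u_2\|_{L^{2s/(s-2)}} .
\]
The $W^{1,s}$-bound on $p_2$ comes from the isomorphism property of $A(u_2)$ established in the proof of \cref{prp-new:elementary_properties}, whose constant is uniform over $\feas$. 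The Poincar\'e inequality on $W^{1,2}_{\tilde\Gamma}(\Omega)$ then controls the full $W^{1,2}$-norm. Finally, $\|\cdot\|_{L^{2s/(s-2)}}\le |\Omega|^{\theta}\|\cdot\|_{L^q}$ for $q\ge 2s/(s-2)$, with a suitable exponent $\theta \ge 0$.

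For the second difference, note that $S_p(y_1,u_2)-S_p(y_2,u_2)$ solves $A(u_2)(p_1-p_2) = -(F'(y_1)-F'(y_2))$ in $W^{-1,2}(\Omega)$. Coercivity, with constant $u_\ell$ and the Poincar\'e inequality, gives
\[
\|S_p(y_1,u_2)-S_p(y_2,u_2)\|_{W^{1,2}} \le C\,\|F'(y_1)-F'(y_2)\|_{W^{-1,2}} \le C L_{F'}\,\|y_1-y_2\|_{W^{1,2}} ,
\]
and here $C$ does not depend on $u_2$. This is item 2 of \cref{lem:SpLipschitz}, again with a uniform constant. Applying \cref{lem:S_Lipschitz} then bounds $\|y_1-y_2\|_{W^{1,2}}$ by $L_S(q)\|u_1-u_2\|_{L^q}$.

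Adding the two estimates gives the claimed Lipschitz continuity, with a constant depending on $q$, $s$, $u_\ell$, $L_{F'}$, the supremum bound on $F'$, and the data $f$, $f_D$. The only potential obstacle is the uniformity of the constants in \cref{lem:SpLipschitz}, which is stated pointwise. I resolve it by using the explicit estimates above rather than quoting that lemma as a black box.
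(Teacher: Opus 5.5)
Your proposal is correct and follows the paper's proof. You insert the mixed term $S_p(S(u_1),u_2)$, then combine the Lipschitz continuity of $S_p(y,\cdot)$ and of $S_p(\cdot,u_2)$ from \cref{lem:SpLipschitz} with the Lipschitz continuity of $S$ from \cref{lem:S_Lipschitz}; your explicit check that these constants are uniform in $u_1,u_2$ (via the bound on $\sup_u\|F'(S(u))\|_{W^{-1,s}}$ and the $u$-independent coercivity and $W^{1,s}$-isomorphism constants) makes precise a point the paper leaves implicit.
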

\begin{proof}
For $u_1$, $u_2\in\feas$, we insert the zero
$0 = S_p(S(u_1),u_2) - S_p(S(u_1),u_2)$
and combine the Lipschitz continuity of $S_p(y,\cdot)$ from \cref{lem:SpLipschitz}
and the Lipschitz continuities of $S_p(\cdot,u_2)$ and $S$ from \cref{lem:S_Lipschitz}
to obtain
$
\|S_p(S(u_1),u_1)-S_p(S(u_2),u_2)\|_{W^{1,2}}
\le
L\|u_1-u_2\|_{L^q},
$
with a constant $L>0$ depending on $q$, $s$, $u_\ell$, and the data.  
This proves the claim.
\end{proof}

\subsection{First Order Derivatives of \texorpdfstring{$S$}{S} and \texorpdfstring{$F \circ S$}{F o S}}
By Proposition \ref{prp-new:elementary_properties} we know that there exists $s> 2$ such that the control-to-state map \\ $S: U_{ad} \to W^{1,s}(\Omega)$ is well-defined. Moreover, by Lemma \ref{lem:S_Lipschitz} we have that for all $q \ge \frac{2s}{s-2}$, \\
$
S : (\feas,\|\cdot\|_{L^{q}}) \to W^{1,2}(\Omega) $
is Lipschitz continuous. Next we prove that $S$ is of class $C^1$. \\

\begin{lemma}\label{lem:S_derivative}
Let $u \in \feas$ and assume that $q \geq \frac{2s}{s-2}$.
Then the control-to-state operator
$
S : (\feas, \|\cdot\|_{L^q}) \to W^{1,2}(\Omega)
$
 is Fr\'echet differentiable at $u$
w.r.t.\ feasible variations. Moreover, given $u \in U_{ad}$ and $\phi \in L^q(\Omega)$ s.t. $u+\phi \in U_{ad}$, we have that $S'(u)[\phi]$ is the unique solution to
\begin{equation}\label{eq:def_z_u_phi}
\int_\Omega u \nabla z \cdot \nabla v \, dx
=
-\int_\Omega \phi \nabla y \cdot \nabla v \, dx, \quad\text{ for all }v \in W^{1,2}_{\tilde\Gamma}(\Omega).
\end{equation}
\end{lemma}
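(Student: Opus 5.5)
We argue directly from the variational identities, using the same testing strategy as in \cref{lem:S_Lipschitz}. Throughout we write $y = S(u)$ and $y_\phi = S(u+\phi)$ for a feasible variation $\phi$, so that $u + \phi \in \feas$.

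First we check that \eqref{eq:def_z_u_phi} is well posed and that the candidate derivative is linear and bounded in $L^q$. By \cref{prp-new:elementary_properties} we have $\nabla y \in L^s$, uniformly in $u \in \feas$. Hölder's inequality with exponents $q$, $s$ and $2$ (note $1/q + 1/s \le 1/2$ because $q \ge 2s/(s-2)$) gives $\phi \nabla y \in L^2$ and
\[
\|\phi\nabla y\|_{L^2} \le \|\phi\|_{L^q}\|\nabla y\|_{L^s}.
\]
Hence the right-hand side of \eqref{eq:def_z_u_phi} is a bounded functional on $W^{1,2}_{\tilde\Gamma}(\Omega)$. Lax--Milgram, with coercivity from $u \ge u_\ell$ and the Poincaré inequality on $W^{1,2}_{\tilde\Gamma}(\Omega)$, yields a unique solution $z = z(\phi)$ satisfying
\[
\|z\|_{W^{1,2}} \le C\|\phi\|_{L^q}.
\]
The map $\phi \mapsto z(\phi)$ is linear, so it extends to a bounded linear operator $L^q(\Omega) \to W^{1,2}_{\tilde\Gamma}(\Omega)$. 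This operator is our candidate for $S'(u)$.

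Next we derive the equation for the remainder $r \coloneqq y_\phi - y - z \in W^{1,2}_{\tilde\Gamma}(\Omega)$. Recall the difference identity from the proof of \cref{lem:S_Lipschitz}:
\[
\int_\Omega u \nabla(y_\phi - y)\cdot\nabla v\,dx = -\int_\Omega \phi\nabla y_\phi\cdot\nabla v\,dx .
\]
It holds for all $v \in W^{1,s'}_{\tilde\Gamma}$, and by density, using $\nabla y_\phi \in L^s$, also for all $v \in W^{1,2}_{\tilde\Gamma}$. Subtracting \eqref{eq:def_z_u_phi} gives
\[
\int_\Omega u\nabla r\cdot\nabla v\,dx = -\int_\Omega \phi\,\nabla(y_\phi - y)\cdot\nabla v\,dx .
\]

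We now test with $v = r$ and use coercivity:
\[
u_\ell\|\nabla r\|_{L^2}^2 \le \|\nabla r\|_{L^2}\,\|\phi\,\nabla(y_\phi-y)\|_{L^2}.
\]
This step is the main obstacle. In the $L^\infty$ setting one simply bounds the right-hand side by $\|\phi\|_{L^\infty}\|\nabla(y_\phi-y)\|_{L^2}$, which is quadratic in $\phi$. With only $\phi \in L^q$ we must instead place $\nabla(y_\phi - y)$ in $L^s$, and \cref{lem:S_Lipschitz} gives no quantitative $W^{1,s}$-smallness. We plan two routes.

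\emph{Route (a): convergence without a rate.} By Hölder,
\[
\|\phi\,\nabla(y_\phi-y)\|_{L^2} \le \|\phi\|_{L^q}\,\|\nabla(y_\phi-y)\|_{L^s},
\]
so it suffices to show $\|\nabla(y_\phi - y)\|_{L^s} \to 0$ as $\|\phi\|_{L^q} \to 0$. Since $y_\phi$ is bounded in $W^{1,s}$ by \eqref{eq:w1s_estimate} and $y_\phi \to y$ in $W^{1,2}$ by \cref{lem:S_Lipschitz}, interpolation (Riesz--Thorin/Hölder) between $L^2$ and $L^{\sigma}$ yields convergence in $W^{1,\sigma'}$ for every $\sigma' < \sigma$. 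We therefore apply the argument with a slightly smaller exponent, choosing $\sigma \in (2,s)$ with $q \ge 2\sigma/(\sigma-2)$. This requires either that $q > 2s/(s-2)$ strictly, or that we use the isomorphism property of $A(u)$ on $W^{1,\sigma}$ for $\sigma$ slightly above $s$ (an open-interval version of Gröger's result).

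\emph{Route (b): uniform $W^{1,s}$ estimate.} Alternatively, in the endpoint case $q = 2s/(s-2)$, we apply the $W^{1,s}$ isomorphism directly to the difference equation:
\[
\|\nabla(y_\phi - y)\|_{L^s} \le C\,\|\phi\nabla y_\phi\|_{L^s}.
\]
This bound is only uniform, not small, so at the endpoint we instead aim for a truncation argument. We split $\phi$ into the part where $|\phi| \le \varepsilon$ and its complement, whose measure is small. Using $|\phi| \le u_u - u_\ell$ and the equi-integrability of $|\nabla y|^s$, this gives $o(\|\phi\|_{L^q})$.

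Either route yields $\|r\|_{W^{1,2}} = o(\|\phi\|_{L^q})$, which proves Fréchet differentiability of $S$ at $u$ with respect to feasible variations and identifies $S'(u)[\phi] = z$ as the solution of \eqref{eq:def_z_u_phi}.
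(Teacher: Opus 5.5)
Your route (a) is the paper's argument: Lax--Milgram for $z$, the remainder identity $\int_\Omega u\nabla r\cdot\nabla v\,dx=-\int_\Omega \phi\nabla(y_\phi-y)\cdot\nabla v\,dx$, H\"older with an exponent $t<s$ satisfying $1/q+1/t=1/2$, and interpolation of $\|\nabla(y_\phi-y)\|_{L^t}$ between the $W^{1,2}$-Lipschitz bound of \cref{lem:S_Lipschitz} and the uniform $W^{1,s}$-bound. It is not ``without a rate'': it gives $\|r\|_{W^{1,2}}\le C\|\phi\|_{L^q}^{1+\theta}$ with $\theta=\frac{2(s-t)}{t(s-2)}$. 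You are right that this needs $q>2s/(s-2)$. The paper picks $t\in(2,s)$, but at $q=2s/(s-2)$ one has $t=s$ and $\theta=0$, so the paper's proof does not cover the endpoint either.

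Your route (b) for the endpoint has a genuine gap. In the identity you use, $A(u)(y_\phi-y)$ is driven by $\phi\nabla y_\phi$. After truncation, the piece on $E=\{|\phi|>\varepsilon\}$ therefore requires $\|\chi_E\nabla y_\phi\|_{L^s}\to 0$ as $|E|\to 0$ \emph{uniformly in} $\phi$. That is equi-integrability of the family $\{|\nabla y_\phi|^s\}$, not of the single function $|\nabla y|^s$. Boundedness in $W^{1,s}$ together with convergence in $W^{1,2}$ does not by itself provide this. Splitting $\nabla y_\phi=\nabla y+\nabla(y_\phi-y)$ only gives $\|y_\phi-y\|_{W^{1,s}}\le C\|\phi\nabla y\|_{L^s}+C(u_u-u_\ell)\|y_\phi-y\|_{W^{1,s}}$, which cannot be absorbed in general.

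The missing idea is to put the \emph{perturbed} coefficient on the left. Subtracting the two state equations gives $\int_\Omega(u+\phi)\nabla(y_\phi-y)\cdot\nabla v\,dx=-\int_\Omega\phi\nabla y\cdot\nabla v\,dx$. Since the $W^{1,s}$-isomorphism constant from \cref{prp-new:elementary_properties} is uniform over coefficients in $\feas$, this yields $\|y_\phi-y\|_{W^{1,s}}\le C\|\phi\nabla y\|_{L^s}$. Now your truncation works, because only the fixed function $|\nabla y|^s\in L^1$ appears: $\|\phi\nabla y\|_{L^s}^s\le\varepsilon^s\|\nabla y\|_{L^s}^s+(u_u-u_\ell)^s\int_{\{|\phi|>\varepsilon\}}|\nabla y|^s\,dx$. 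Here $|\{|\phi|>\varepsilon\}|\le\varepsilon^{-q}\|\phi\|_{L^q}^q\to 0$, and the integral vanishes by absolute continuity. Hence $\|r\|_{W^{1,2}}\le C\|\phi\|_{L^q}\|\phi\nabla y\|_{L^s}=o(\|\phi\|_{L^q})$ for every $q\ge 2s/(s-2)$. This covers the endpoint and in fact makes route (a) unnecessary, at the price of losing the explicit rate.
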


\begin{proof}
Let $u \in \feas$, and let $\phi \in L^q(\Omega)$ such that $u+\phi \in \feas$. 
The bilinear form
$
a(z,v)
:=
\int_\Omega u \nabla z \cdot \nabla v\,dx $
is continuous on
$W^{1,2}_{\tilde\Gamma}(\Omega)\times
W^{1,2}_{\tilde\Gamma}(\Omega)$ since
$u\in L^\infty(\Omega)$, and coercive because
$u\ge u_\ell>0$ almost everywhere in $\Omega$.
Let $G_\phi: W^{1,2}_{\tilde\Gamma}(\Omega) \to \R$ be given by $ \langle G_\phi, v\rangle
:=
-\int_{\Omega} \phi \nabla y \cdot \nabla v dx$. 
Using
H\"older's Inequality with exponents $q$, $t$ and $2$ such that 
$
\frac1q+\frac1t+\frac12=1,$  we obtain (due to the fact that $t \in (2, s]$) that
$$
\left|
\langle G_\phi, v\rangle
\right|
\le 
\|\phi\|_{L^q}
\|\nabla y\|_{L^t}
\|\nabla v\|_{L^2}
\le C
\|\phi\|_{L^q}
\|\nabla y\|_{L^s}
\|\nabla v\|_{L^2},$$
which shows that $ G_\phi \in 
W^{-1,2}(\Omega)$. Hence, by Lax--Milgram Theorem,
there exists a unique solution 
$z_{u, \phi} \in W^{1,2}_{\tilde\Gamma}(\Omega)$ to \eqref{eq:def_z_u_phi}. 
Moreover, we have that $ G_\phi \in W^{-1,s}(\Omega)$. Indeed,
using H\"older's inequality together with the $W^{1,s}$-regularity of $y$
and the $L^\infty$-regularity of $\phi$, we obtain
\[
|\langle G_\phi,v\rangle|
\le
\|\phi\|_{L^\infty}
\|\nabla y\|_{L^s}
\|\nabla v\|_{L^{s'}}
\le
\|\phi\|_{L^\infty}
\|y\|_{W^{1,s}}
\|v\|_{W^{1,s'}}.
\]
Thus Proposition~\ref{prp-new:elementary_properties} implies that
$ z_{u, \phi} \in W^{1,s}_{\tilde\Gamma}(\Omega),$  and there exists $C_1>0$ such that
$
\| z_{u, \phi}\|_{W^{1,s}}
\le
C_1
\|G_\phi\|_{W^{-1,s}}
\le
C_1
\|\phi\|_{L^\infty}
\|y\|_{W^{1,s}}.$
Therefore, the unique solution
$z_{u, \phi} \in W^{1,2}_{\tilde\Gamma}(\Omega)$ satisfies
$$
\|z_{u, \phi}\|_{W^{1,2}(\Omega)}
\le
C_2
\|\phi\|_{L^q}
\|y\|_{W^{1,s}}.$$
Now for all $v \in W_{\tilde\Gamma}^{1,2}(\Omega)$, the homogenized states $\tilde y = y - \tilde f_D$ and  $\tilde y_{\phi} = y_{\phi} - \tilde f_D$ satisfy
\begin{align}
\int_\Omega u \nabla \tilde y \cdot \nabla v \, dx
&=
\int_\Omega f v \, dx
-
\int_\Omega u \nabla \tilde f_D \cdot \nabla v \, dx
\label{eq:state_u}\\
\int_\Omega (u+\phi) \nabla \tilde y_{\phi} \cdot \nabla v \, dx
&=
\int_\Omega f v \, dx
-
\int_\Omega (u+\phi) \nabla \tilde f_D \cdot \nabla v \, dx
\label{eq:state_uh}
\end{align}
Subtracting \eqref{eq:state_u} from \eqref{eq:state_uh} yields the identity
\[
\int_\Omega u \nabla(\tilde y_{\phi} - \tilde y) \cdot \nabla v \, dx
+
\int_\Omega \phi \nabla \tilde y_{\phi} \cdot \nabla v \, dx
=
-  \int_\Omega \phi \nabla \tilde f_D \cdot \nabla v \, dx.
\]
Subtracting $\int_\Omega \phi \nabla \tilde y_{\phi} \cdot \nabla v \, dx$ from both sides and 
using $\tilde y_{\phi} + \tilde f_D = y_{\phi}$, the identity becomes
\begin{equation}\label{varformtildeuphi}
\int_\Omega u \nabla(\tilde{y}_{\phi} - \tilde{y}) \cdot \nabla v \, dx
=
-  \int_\Omega \phi \nabla y_{\phi} \cdot \nabla v \, dx.
\end{equation}
Combining \eqref{varformtildeuphi} with \eqref{eq:def_z_u_phi} we obtain 
\begin{equation}\label{varformderivative}
\int_\Omega u \nabla(\tilde{y}_{\phi} - \tilde{y} - z_{u, \phi}) \cdot \nabla v \, dx
=
-  \int_\Omega \phi \nabla (y_{\phi} -  y) \cdot \nabla v \, dx, \ \forall v \in W^{1,2}_{\tilde \Gamma}(\Omega).
\end{equation}
From \eqref{varformderivative}, using the lower bound on $u$ and the fact that $
y_\phi-y=\tilde y_\phi-\tilde y$, we obtain 
\begin{align}
u_{\ell} &  \|y_{\phi} - y - z_{u, \phi}\|^2_{W^{1,2}}
\leq  C_p \Big | \int_{\Omega} u  \nabla ( \tilde{y}_{\phi} - \tilde{y} -  z_{u, \phi} ) \nabla ( \tilde{y}_{\phi} - \tilde{y} -  z_{u, \phi} ) dx \Big | \nonumber \\
&= C_p \Big | \int_{\Omega} \phi   \nabla ( \tilde{y}_{\phi} - \tilde{y}  ) \nabla ( \tilde{y}_{\phi} - \tilde{y} - z_{u, \phi} ) dx \Big |  \leq C_p\| \phi\|_{L^q}
\|  \nabla ( \tilde{y}_{\phi} - \tilde{y}  )\|_{L^{t}}
\| \nabla ( \tilde{y}_{\phi} - \tilde{y} -  z_{u, \phi} )\|_{L^{2}}, \nonumber \\
\end{align}
where $t \in (2,s)$ is such that 
$
\frac1q+\frac1t+\frac12=1$. Then using interpolation between $L^2$ and $L^s$, there exists
$\theta=\frac{2(s-t)}{t(s-2)} \in(0,1)$ such that 
\begin{align}
u_{\ell} &  \|y_{\phi} - y -  z_{u, \phi}\|^2_{W^{1,2}}
\le C_p\|\phi\|_{L^q}
\|\nabla(y_{\phi} - y)\|_{L^2}^{\theta}
\|\nabla(y_{\phi} - y)\|_{L^s}^{1-\theta}
\| \nabla ( \tilde{y}_{\phi} - \tilde{y} -  z_{u, \phi} )\|_{L^{2}}.
\end{align}
Finally, using the the Lipschitz continuity of the control-to-state operator
from Lemma \ref{lem:S_Lipschitz} we obtain
\begin{align}
u_{\ell} &  \|y_{\phi} - y - z_{\phi}\|^2_{W^{1,2}}
\leq C \| \phi\|_{L^q}^{1+\theta}
C(s)(\|f\|_{W^{-1,s}} + \|f_D\|_{W^{1-1/s,s}(\tilde{\Gamma})})^{1 - \theta}
\|y_{\phi} - y - z_{\phi}\|_{W^{1,2}},\nonumber
\end{align}
for real constant $C>0$, which allows us to conclude that
\[
\frac{
\|y_{\phi} - y - z_{\phi}\|_{W^{1,2}}
}{
\|\phi\|_{L^q}
}
\to 0,
\qquad
\text{as }
\|\phi\|_{L^q}\to 0.
\]
Thus $S$ is Fr\'echet differentiable at $u$ and 
$S'(u)[\phi]= z_{u, \phi}$. 

\end{proof}

\begin{theorem}\label{thm:old_regularity_of_first_derivative}
Let $q > \frac{2s}{s - 2}$, and $F: W^{1,2}(\Omega)\to \R$ be continuously differentiable with  Lipschitz
continuous derivative $F' : W^{1,2}(\Omega) \to W^{-1,2}(\Omega)$ and $\sup\{ \|F'(S(u))\|_{W^{-1,s}} : u \in \feas\} < \infty$.
Then the mapping $F \circ S : (\feas, \|\cdot\|_{L^q}) \to \R$ is 
Fr\'echet differentiable at $u \in \feas$ with respect to feasible variations. Specifically, with $y = S(u)$ and $p = S_p(y, u)$, we have
\[ \underset{\substack{\|\phi\|_{L^q} \to 0\\ u+\phi\in\feas}}\lim
\frac{|F(S(u + \phi)) - F(y) - (\phi \nabla y, \nabla p)_{L^2}|}{\|\phi\|_{L^q}} = 0
\]
Moreover, the mapping
$
\feas \ni u
\mapsto
\nabla y(u)\cdot \nabla p(u)
\in L^{\frac{2s}{s+2}}(\Omega)
$
is Lipschitz continuous with respect to the $L^q(\Omega)$-norm.
\end{theorem}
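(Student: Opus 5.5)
The plan has two parts: the chain rule combined with the adjoint representation, and then a product estimate for the Lipschitz claim.

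\textbf{Differentiability via the chain rule.} Fix $u\in\feas$, set $y=S(u)$, and for feasible $\phi$ set $y_\phi=S(u+\phi)$. We split
\[
F(y_\phi)-F(y)-\langle F'(y),z_{u,\phi}\rangle
=\bigl[F(y_\phi)-F(y)-\langle F'(y),y_\phi-y\rangle\bigr]+\langle F'(y),y_\phi-y-z_{u,\phi}\rangle .
\]
Since $F'$ is Lipschitz, the first bracket is bounded by $\tfrac{L_F}{2}\|y_\phi-y\|_{W^{1,2}}^2$. By \cref{lem:S_Lipschitz} this is $O(\|\phi\|_{L^q}^2)$. The second term is $o(\|\phi\|_{L^q})$ by \cref{lem:S_derivative}.

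\textbf{Identifying the derivative.} Next we rewrite $\langle F'(y),z_{u,\phi}\rangle$. Since $z_{u,\phi}\in W^{1,2}_{\tilde\Gamma}(\Omega)$, we may test the adjoint equation with $v=z_{u,\phi}$. This gives
\[
\langle F'(y),z_{u,\phi}\rangle=-\int_\Omega u\nabla p\cdot\nabla z_{u,\phi}\,dx .
\]
Testing \eqref{eq:def_z_u_phi} with $v=p$ and using the symmetry of the bilinear form, the right-hand side equals $(\phi\nabla y,\nabla p)_{L^2}$. For these pairings to be finite we need $\nabla y,\nabla p\in L^s$, so that $\nabla y\cdot\nabla p\in L^{s/2}$. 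Hölder's inequality then requires $\phi\in L^{(s/2)'}$, and $q>2s/(s-2)$ exceeds $(s/2)'=s/(s-2)$. So the linear functional $\phi\mapsto(\phi\nabla y,\nabla p)$ is bounded on $L^q$, and this proves the first claim.

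\textbf{Lipschitz continuity of $\nabla y\cdot\nabla p$.} For $u_1,u_2\in\feas$ we write
\[
\nabla y_1\cdot\nabla p_1-\nabla y_2\cdot\nabla p_2=\nabla(y_1-y_2)\cdot\nabla p_1+\nabla y_2\cdot\nabla(p_1-p_2).
\]
We apply Hölder's inequality with $\tfrac{s+2}{2s}=\tfrac12+\tfrac1s$. The factors $\|\nabla p_1\|_{L^s}$ and $\|\nabla y_2\|_{L^s}$ are uniformly bounded over $\feas$. For $y$ this follows from \cref{prp-new:elementary_properties}; for $p$ it follows from the same regularity together with the assumed uniform bound on $\|F'(S(u))\|_{W^{-1,s}}$. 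The $L^2$-differences are then controlled by \cref{lem:S_Lipschitz} and \cref{cor:Sp_JointlyLipschitz}, each by a constant times $\|u_1-u_2\|_{L^q}$. This yields the Lipschitz estimate in $L^{2s/(s+2)}(\Omega)$.

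\textbf{Expected obstacle.} The main point to check is that the adjoint state satisfies $p\in W^{1,s}$ uniformly in $u$. This needs the Gröger isomorphism to be applied to $A(u)$ with right-hand side $-F'(y)\in W^{-1,s}(\Omega)$, with constants independent of $u\in\feas$. \Cref{prp-new:elementary_properties} provides this, since its constant depends only on $u_\ell$, $u_u$ and the geometry. I would state this explicitly, because the uniform bound on $\nabla p$ is what makes both the remainder estimate and the Lipschitz constant uniform.
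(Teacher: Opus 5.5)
Your proposal is correct and takes the same route as the paper. The steps match: the chain rule via \cref{lem:S_derivative}, the adjoint identity from testing the adjoint equation with $z_{u,\phi}$ and \eqref{eq:def_z_u_phi} with $p$, and the H\"older split with $\tfrac{s+2}{2s}=\tfrac12+\tfrac1s$ combined with uniform $W^{1,s}$-bounds and \cref{lem:S_Lipschitz,cor:Sp_JointlyLipschitz}. Your explicit remainder splitting via the Lipschitz continuity of $F'$, and your check that $\phi\mapsto(\phi\nabla y,\nabla p)_{L^2}$ is bounded on $L^q(\Omega)$, spell out steps the paper's proof leaves implicit.
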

\begin{proof}
Using Theorem \ref{lem:S_derivative} and the assumptions on  $F$ we obtain that $(F \circ S)'(u)[\phi] = \langle F'(y),z_{u,\phi}\rangle$, for $u \in \feas$, and $\phi \in L^q(\Omega)$ such that $u+\phi \in \feas$. 

Now we use
the adjoint equation \eqref{eq:adjoint} (tested with $z_{u, \phi}$) and the variational form \eqref{eq:def_z_u_phi} of
$z_{u, \phi}$ (tested with $p$) to deduce that
$ \langle F'(y), z_{u, \phi}\rangle
 = - \int_{\Omega} u \nabla p \cdot \nabla z_{u, \phi}
 = \int_{\Omega} \phi \nabla y \cdot \nabla p.$ 
Now, let $u_1$, $u_2 \in \feas$ and set
$y_1 \coloneqq S(u_1)$, $y_2 \coloneqq S(u_2)$,
$p_1 \coloneqq S_p(y_1,u_1)$, and
$p_2 \coloneqq S_p(y_2,u_2)$.
By H\"older's inequality with
$
\frac{s+2}{2s}=\frac12+\frac1s,$ 
we obtain
\begin{align*}
\|\nabla y_1 \cdot \nabla p_1 - \nabla y_2 \cdot \nabla p_2\|_{L^{\frac{2s}{s+2}}}
&\le
\|(\nabla y_1-\nabla y_2)\cdot \nabla p_1\|_{L^{\frac{2s}{s+2}}}
+
\|\nabla y_2\cdot(\nabla p_1-\nabla p_2)\|_{L^{\frac{2s}{s+2}}} \\
&\le
\|\nabla y_1-\nabla y_2\|_{L^2}
\|\nabla p_1\|_{L^s}
+
\|\nabla y_2\|_{L^s}
\|\nabla p_1-\nabla p_2\|_{L^2}.
\end{align*}
By Proposition~\ref{prp-new:elementary_properties}, there exists a constant $C_y>0$,
depending only on the data, the domain, and the admissible bounds
$u_\ell,u_u$, such that
$
\|y(u)\|_{W^{1,s}(\Omega)}
\le C_y,
\; \forall u\in\feas .
$
Similarly, the assumption
$
\sup_{u\in\feas}\|F'(S(u))\|_{W^{-1,s}(\Omega)}<\infty$
is used to obtain a uniform $W^{1,s}$-bound for the adjoint variables.
Note that $p(u)$ solves
$
A(u)p(u)=-F'(S(u)),$ 
the $W^{1,s}$-regularity estimate for the adjoint equation yields
$
\|p(u)\|_{W^{1,s}(\Omega)}
\le
C\|F'(S(u))\|_{W^{-1,s}(\Omega)}$ 
for all $u\in\feas$. Hence, there exists a constant $C_p>0$ such that
$
\|p(u)\|_{W^{1,s}(\Omega)}
\le C_p
\; \forall u\in\feas .
$
Hence, $
\|\nabla p_1\|_{L^s}\le C_p,
\; 
\|\nabla y_2\|_{L^s}\le C_y .$ Combining these uniform bounds with
\cref{lem:S_Lipschitz,cor:Sp_JointlyLipschitz}, we obtain
\[
\|\nabla y_1-\nabla y_2\|_{L^2}
+
\|\nabla p_1-\nabla p_2\|_{L^2}
\le C\|u_1-u_2\|_{L^q}.
\]
Therefore,
\[
\|\nabla y_1 \cdot \nabla p_1 - \nabla y_2 \cdot \nabla p_2\|_{L^{\frac{2s}{s+2}}}
\le C\|u_1-u_2\|_{L^q},
\]
where the constant $C>0$ is independent of $u_1,u_2\in\feas$.
This proves the final claim. 

\end{proof}
\begin{remark}\label{rem:jprime}  Let $
j:\feas \to \mathbb{R},$ be defined by  $j(u) = (F \circ S) (u).$ 
	For all $u \in \feas$ and $\phi$ that satisfy the assumptions of \cref{thm:old_regularity_of_first_derivative}, with $y = S(u),$ we write $ j'(u)[\phi] \coloneqq \int_{\Omega} \phi \nabla p \cdot \nabla y\,dx $
	and observe that this is a bounded linear operator in $\phi$ with respect to the $L^q$-norm.  Moreover, the Lipschitz continuity of the mapping
$
\feas\ni u
\mapsto
\nabla y(u)\cdot \nabla p(u)
\in L^{\frac{2s}{s+2}}(\Omega)
$
yields corresponding continuity properties of the derivative
$j'(u)$ . These properties
will be used later in Section~3 for the optimality system and in
Section~4 in the trust-region convergence analysis.
\end{remark}

\subsection{Second Order Derivatives of \texorpdfstring{$S$}{S} and \texorpdfstring{$F \circ S$}{F o S}} 

From the previous section we have that for  $u \in U_{ad}$ and $\phi \in L^q(\Omega)$ such that $u+\phi \in U_{ad}$, $z_{u, \phi} = S'(u)[\phi] \in W^{1,2}_{\tilde\Gamma}(\Omega)$ is the unique solution to \eqref{eq:def_z_u_phi}. Now we obtain the following property for $z_{u, \phi}$, which will be used in the proof of the second-order
differentiability of the control-to-state operator. 

\begin{lemma}\label{lem:Sprime_lipschitz}
Let $q\ge \frac{2s}{s-2}$ and let $\phi\in L^\infty(\Omega)$
be a feasibly approximable direction as in
\cref{thm:old_regularity_of_first_derivative}.
For every $u\in \feas$, let $y(u)=S(u)\in W^{1,2}(\Omega)$ denote the state.
Then there exists a constant $L>0$ such that for all
$u_1,u_2\in \feas$ we have
\[
\|z_{u_1,\phi}-z_{u_2,\phi}\|_{W^{1,2}(\Omega)}
\le
L\max\{1,\|\phi\|_{L^\infty}\}
\|u_1-u_2\|_{L^q(\Omega)}.
\]
\end{lemma}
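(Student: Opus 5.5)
We subtract the variational equations \eqref{eq:def_z_u_phi} satisfied by $z_1 \coloneqq z_{u_1,\phi}$ and $z_2 \coloneqq z_{u_2,\phi}$, writing $y_i = S(u_i)$. For all $v \in W^{1,2}_{\tilde\Gamma}(\Omega)$ this gives
\[
\int_\Omega u_1 \nabla(z_1 - z_2)\cdot\nabla v\,dx
= \int_\Omega (u_2-u_1)\nabla z_2\cdot\nabla v\,dx
- \int_\Omega \phi\, \nabla(y_1-y_2)\cdot\nabla v\,dx .
\]
We then test with $v = z_1 - z_2 \in W^{1,2}_{\tilde\Gamma}(\Omega)$. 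The left-hand side is bounded below by $u_\ell\|\nabla(z_1-z_2)\|_{L^2}^2$, and the Poincar\'e inequality on $W^{1,2}_{\tilde\Gamma}(\Omega)$ turns this into control of the full $W^{1,2}$-norm, exactly as in \cref{lem:S_Lipschitz}.

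The second term on the right is the easy one. By H\"older's inequality and \cref{lem:S_Lipschitz},
\[
\Bigl|\int_\Omega \phi\,\nabla(y_1-y_2)\cdot\nabla(z_1-z_2)\,dx\Bigr|
\le \|\phi\|_{L^\infty}\,\|\nabla(y_1-y_2)\|_{L^2}\,\|\nabla(z_1-z_2)\|_{L^2}
\le L_S \|\phi\|_{L^\infty}\|u_1-u_2\|_{L^q}\,\|\nabla(z_1-z_2)\|_{L^2}.
\]
For the first term we use H\"older's inequality with exponents $q$, $t$ and $2$ satisfying $\frac1q+\frac1t+\frac12 = 1$. Since $q\ge \frac{2s}{s-2}$, we have $t\le s$, so on the bounded domain
\[
\Bigl|\int_\Omega (u_2-u_1)\nabla z_2\cdot\nabla(z_1-z_2)\,dx\Bigr|
\le C\,\|u_1-u_2\|_{L^q}\,\|\nabla z_2\|_{L^s}\,\|\nabla(z_1-z_2)\|_{L^2}.
\]

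The main point is therefore a uniform $W^{1,s}$-bound on $z_2$. Here the hypothesis $\phi \in L^\infty(\Omega)$ enters. We invoke the estimate already derived in the proof of \cref{lem:S_derivative}: the functional $G_\phi$ lies in $W^{-1,s}(\Omega)$ with $\|G_\phi\|_{W^{-1,s}}\le \|\phi\|_{L^\infty}\|y_2\|_{W^{1,s}}$. The isomorphism property from \cref{prp-new:elementary_properties} (with $\sigma=s$, applied to $A(u_2)$) then gives $\|z_2\|_{W^{1,s}}\le C_1\|\phi\|_{L^\infty}\|y_2\|_{W^{1,s}}$. Finally, \eqref{eq:w1s_estimate} bounds $\|y_2\|_{W^{1,s}}\le C_y$ uniformly over $\feas$.

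One point needs care. The norm of $A(u)^{-1}: W^{-1,s}(\Omega)\to W^{1,s}_{\tilde\Gamma}(\Omega)$ must be uniform in $u\in\feas$. I would justify this by noting that Gr\"oger's condition $M_s k<1$ depends only on $u_\ell$ and $u_u$, so the inverse bound $C_1$ depends only on $u_\ell$, $u_u$, $\Omega$, $\Gamma$, $\tilde\Gamma$ and $s$. This is also the reason the direction must be in $L^\infty$ rather than merely in $L^q$, and I expect this uniformity step to be the main obstacle to make rigorous.

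Combining the three estimates and dividing by $\|\nabla(z_1-z_2)\|_{L^2}$ yields
\[
u_\ell \|\nabla(z_1-z_2)\|_{L^2}\le \bigl(C C_1 C_y + L_S\bigr)\|\phi\|_{L^\infty}\|u_1-u_2\|_{L^q}.
\]
After applying Poincar\'e, this gives the claim with $L$ independent of $u_1$, $u_2$ and $\phi$. The bound is in fact linear in $\|\phi\|_{L^\infty}$, which is dominated by $\max\{1,\|\phi\|_{L^\infty}\}$.
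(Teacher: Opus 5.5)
Your proof is correct and follows the paper's argument essentially step for step: you subtract the two variational problems, test with $z_1-z_2$, apply H\"older with $\frac1q+\frac1t+\frac12=1$ and $t\le s$, use \cref{lem:S_Lipschitz} for $y_1-y_2$, and obtain a uniform $W^{1,s}$-bound on $z_2$ from $\|G\|_{W^{-1,s}}\le\|\phi\|_{L^\infty}\|\nabla y_2\|_{L^s}$ together with \cref{prp-new:elementary_properties}. The uniformity in $u$ of the inverse bound, which you flag as the main obstacle, is exactly what the paper takes from \cref{prp-new:elementary_properties}: its constant depends only on the box bounds, the domain, the boundary parts and $s$, not on the particular $u\in\feas$.
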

\begin{proof}
 Let $u_1$, $u_2\in \feas$ be arbitrary and set
$ y_i \coloneqq S(u_i),\; z_i \coloneqq z_{u_i,\phi} = S'(u_i)[\phi]
$
for $i \in \{1,2\}$. By definition, $z_i$ satisfies \eqref{eq:def_z_u_phi} with $u=u_i$ and $y(u_i)=y_i$.
We therefore subtract \eqref{eq:def_z_u_phi} for $i = 2$ from \eqref{eq:def_z_u_phi} for $i = 1$
for arbitrary test functions $v\in W^{1,2}_{\tilde\Gamma}(\Omega)$ and obtain
\begin{equation*}
\int_\Omega u_1 \nabla z_1\cdot\nabla v\,dx
-\int_\Omega u_2 \nabla z_2\cdot\nabla v\,dx
=
-\int_\Omega \phi \nabla (y_1-y_2)\cdot\nabla v\,dx.
\end{equation*}
Subtracting $\int_\Omega u_1\nabla z_2\cdot\nabla v\,dx$ from both sides and reorganizing terms yields
\begin{equation*}
\int_\Omega u_1\nabla(z_1-z_2)\cdot\nabla v\,dx
=
-\int_\Omega \phi\,\nabla(y_1-y_2)\cdot\nabla v\,dx
-\int_\Omega (u_1-u_2)\nabla z_2\cdot\nabla v\,dx.
\end{equation*}
The choice $v=z_1-z_2\in W^{1,2}_{\tilde\Gamma}(\Omega)$ then yields
\begin{align*}
\int_\Omega u_1|\nabla(z_1-z_2)|^2\,dx
&=
-\int_\Omega \phi\,\nabla(y_1-y_2)\cdot\nabla(z_1-z_2)\,dx
-\int_\Omega (u_1-u_2)\nabla z_2\cdot\nabla(z_1-z_2)\,dx.
\end{align*}
Using the fact that $u_1\ge u_\ell>0$ a.e.\ in $\Omega,$ together with similar estimates
as in the proof of \cref{lem:S_Lipschitz}, we obtain
\begin{equation} \label{inequality}
u_\ell\|\nabla(z_1-z_2)\|_{L^2(\Omega,\R^d)}
\le 
\|\phi\|_{L^\infty(\Omega)} \|\nabla(y_1-y_2)\|_{L^2(\Omega,\R^d)}
+ \|\nabla z_2\|_{L^{t}(\Omega,\R^d)}\|u_1-u_2\|_{L^q(\Omega)}
\end{equation}
with $t \coloneqq 2q/(q-2)$ so that $1/q+1/t+1/2=1.$ Since \(q\ge \frac{2s}{s-2}\), we have \(t\le s\). Hence, by H\"older's inequality and the continuous embedding \(L^s(\Omega)\hookrightarrow L^t(\Omega)\),  $
\|\nabla z_2\|_{L^t(\Omega,\mathbb{R}^d)}
\le C\|\nabla z_2\|_{L^s(\Omega,\mathbb{R}^d)}.$ 
Using \cref{lem:S_Lipschitz}, we obtain that $
\|\nabla(y_1-y_2)\|_{L^2(\Omega,\mathbb{R}^d)}
\le L_S\|u_1-u_2\|_{L^q(\Omega)}.$ 
Therefore, inequality \eqref{inequality} yields
\[
u_\ell\|\nabla(z_1-z_2)\|_{L^2(\Omega,\mathbb{R}^d)}
\le
\Big(
L_S\|\phi\|_{L^\infty(\Omega)}
+
C\|\nabla z_2\|_{L^s(\Omega,\mathbb{R}^d)}
\Big)
\|u_1-u_2\|_{L^q(\Omega)}.
\]
It remains to uniformly bound $\|\nabla z_2\|_{L^s(\Omega,\mathbb{R}^d)}$ in terms of $\| \phi \|_{L^{\infty}}.$ Define the functional $G\in W^{-1,s}(\Omega)$ by
\[
\langle G,v\rangle_{W^{-1,s},W^{1,s'}}
:=
-\int_\Omega \phi \nabla y_2\cdot \nabla v\,dx
\qquad
\forall v\in W^{1,s'}_{\tilde\Gamma}(\Omega).
\]
Then
\begin{align*}
\|G\|_{W^{-1,s}(\Omega)}
&=
\sup_{\|v\|_{W^{1,s'}(\Omega)}\le1}
\left|
\langle G,v\rangle_{W^{-1,s},W^{1,s'}}
\right|  \le
\|\phi\|_{L^\infty(\Omega)}
\|\nabla y_2\|_{L^s(\Omega,\mathbb R^d)} .
\end{align*}
Consequently, the same regularity theory as for the state equation
applies and we
deduce from \cref{prp-new:elementary_properties}
and the assumptions on $\phi$ that
\[ \|z_2\|_{W^{1,s}} \le  K_2(s) 
\|\phi\|_{L^\infty(\Omega)}\| \nabla y_2\|_{L^s(\Omega,\R^d)}
\le  K_2(s)\|\phi\|_{L^\infty}C_{y}
\]
with $C_y \coloneqq \|\tilde{f}_D\|_{W^{1,s}}
+ K_2(s)\bigl(\|f\|_{W^{-1,s}} 
+ u_u\|\nabla \tilde{f}_D\|_{L^s}\bigr)$.
\end{proof}
To prove second-order differentiability of the control-to-state
operator, we later need to estimate terms of the form
$
\phi \nabla z_{u,\psi}$
in \(L^2(\Omega)\). For this purpose, the \(W^{1,2}\)-regularity of
\(z_{u,\psi}\) is not sufficient, and we require the following improved
\(W^{1,\sigma}\)-regularity result for first variations. This is presented in the next lemma.

\begin{lemma}\label{lem:firstvar-Lqtilde}
Let \(u\in U_{\rm ad}\) and let \(y=S(u)\).
Let $\sigma\in(2,s)$ and set $
\sigma' \coloneqq \frac{\sigma}{\sigma-1}$.
Choose \(\hat q=\hat q(\sigma)\in(1,\infty)\) such that
$1/s+1/{\hat q}=1/\sigma$. 
Then, for every \(\phi\in L^{\hat q}(\Omega)\), there exists a unique
$
z_{u,\phi}=S'(u)[\phi]
\in
W^{1,\sigma}_{\tilde\Gamma}(\Omega)
$
satisfying
\[
\int_\Omega
u\nabla z_{u,\phi}\cdot\nabla v\,dx
=
-\int_\Omega
\phi\nabla y\cdot\nabla v\,dx
\qquad
\forall v\in W^{1,\sigma'}_{\tilde\Gamma}(\Omega).
\]
Moreover, there exists a constant \(C>0\), depending only on
\(\Omega\), \(\sigma\), the admissible bounds
\(u_\ell,u_u\), and the data \(f,f_D\), such that
$
\|z_{u,\phi}\|_{W^{1,\sigma}(\Omega)}
\le
C\|\phi\|_{L^{\hat q}(\Omega)}.
$
\end{lemma}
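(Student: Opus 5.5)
The plan is to follow the pattern of the proof of \cref{lem:S_derivative}. We treat the right-hand side as a functional $G_\phi$ and show it lies in $W^{-1,\sigma}(\Omega)$. Then the isomorphism property of $A(u)$ from \cref{prp-new:elementary_properties} at the exponent $\sigma\in(2,s)\subset[2,s]$ gives existence, uniqueness and the estimate.

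\textbf{Step 1: $G_\phi\in W^{-1,\sigma}(\Omega)$.} Define $\langle G_\phi,v\rangle\coloneqq-\int_\Omega\phi\nabla y\cdot\nabla v\,dx$ for $v\in W^{1,\sigma'}_{\tilde\Gamma}(\Omega)$. Since $1/\sigma=1/s+1/\hat q$, the generalized H\"older inequality with exponents $\hat q,s,\sigma'$ applies, because $1/\hat q+1/s+1/\sigma'=1/\sigma+1/\sigma'=1$. It gives
\[
|\langle G_\phi,v\rangle|\le\|\phi\|_{L^{\hat q}}\|\nabla y\|_{L^s}\|\nabla v\|_{L^{\sigma'}}.
\]
Hence $\|G_\phi\|_{W^{-1,\sigma}}\le\|\phi\|_{L^{\hat q}}\|\nabla y\|_{L^s}$. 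Equivalently, $\phi\nabla y\in L^\sigma(\Omega,\R^d)$.

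\textbf{Step 2: uniform bound on $\nabla y$.} By \eqref{eq:w1s_estimate} with $\sigma=s$, we have $\|\nabla y\|_{L^s}\le\|S(u)\|_{W^{1,s}}\le C_y$. Here $C_y$ depends only on $s$, $u_u$, the domain and the data $f$, $f_D$, and it is uniform over $u\in\feas$. Implicitly this uses the standing assumptions $f\in W^{-1,s}(\Omega)$ and $f_D\in W^{1-1/s,s}(\tilde\Gamma)$.

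\textbf{Step 3: solve and estimate.} By the proof of \cref{prp-new:elementary_properties}, based on Gr\"oger's theory, $A(u):W^{1,\sigma}_{\tilde\Gamma}(\Omega)\to W^{-1,\sigma}(\Omega)$ is an isomorphism for every $\sigma\in[2,s]$ and every $u\in\feas$. Its inverse has norm bounded by a constant $K(\sigma)$ that is uniform in $u\in\feas$, since the constant in \eqref{eq:w1s_estimate} depends only on $u_\ell$, $u_u$, the domain and $\sigma$. Setting $z_{u,\phi}\coloneqq A(u)^{-1}G_\phi$ yields existence and uniqueness in $W^{1,\sigma}_{\tilde\Gamma}(\Omega)$ of a solution to the stated variational problem with test functions in $W^{1,\sigma'}_{\tilde\Gamma}(\Omega)$. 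It also yields
\[
\|z_{u,\phi}\|_{W^{1,\sigma}}\le K(\sigma)C_y\|\phi\|_{L^{\hat q}}.
\]

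\textbf{Step 4: consistency with $S'(u)[\phi]$.} When $\phi$ is also an admissible direction as in \cref{lem:S_derivative}, $z_{u,\phi}$ must coincide with $S'(u)[\phi]$. Since $\sigma>2$, we have $\sigma'<2$, so $W^{1,2}_{\tilde\Gamma}\hookrightarrow W^{1,\sigma'}_{\tilde\Gamma}$ and $W^{1,\sigma}_{\tilde\Gamma}\hookrightarrow W^{1,2}_{\tilde\Gamma}$. The $W^{1,\sigma}$ solution therefore satisfies \eqref{eq:def_z_u_phi} for all $v\in W^{1,2}_{\tilde\Gamma}(\Omega)$. By Lax--Milgram uniqueness in $W^{1,2}_{\tilde\Gamma}(\Omega)$, it equals the derivative from \cref{lem:S_derivative}.

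\textbf{Main obstacle.} The delicate point is Step 3: we need the inverse bound uniform in $u$ and valid at the intermediate exponent $\sigma$, not only at $s$ or $2$. I would justify this via the Gr\"oger/Herzog condition $M_\sigma k<1$ on the whole interval $[2,s]$. The constant $k$ depends only on $u_\ell/u_u$, hence uniformly on $\feas$, and by the Riesz--Thorin interpolation used by Herzog et al., $M_\sigma\le M_s$ on $[2,s]$.
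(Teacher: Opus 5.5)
Your proposal is correct and follows essentially the same argument as the paper. It uses H\"older with $1/\hat q+1/s+1/\sigma'=1$ to place the right-hand side in $W^{-1,\sigma}(\Omega)$, the uniform bound $\|y\|_{W^{1,s}}\le C_y$, and the isomorphism of $A(u)$ at the exponent $\sigma$ from \cref{prp-new:elementary_properties}. Your Step 4 identifies the $W^{1,\sigma}$ solution with $S'(u)[\phi]$ via the embeddings and Lax--Milgram uniqueness, which makes explicit what the paper only indicates by noting $\hat q=s\sigma/(s-\sigma)>2s/(s-2)$.
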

\begin{proof}
First note that
$
\hat q
=
\frac{s\sigma}{s-\sigma}.
$
Since \(\sigma\in(2,s)\), we obtain
$
\hat q
>
\frac{2s}{s-2}.$ 
Hence, the requirement from
\cref{thm:old_regularity_of_first_derivative}
is satisfied. Let \(\phi\in L^{\hat q}(\Omega)\).
Since \(y=S(u)\in W^{1,s}(\Omega)\), we have that $
\nabla y\in L^s(\Omega,\mathbb R^d).$ 
By the choice of \(\hat q\), $1/s+1/{\hat q}=1/\sigma$, 
and therefore Hölder's inequality yields
$\phi\nabla y\in L^\sigma(\Omega,\mathbb R^d)$.

We now define the functional
$
G_1\in W^{-1,\sigma}(\Omega)$ 
by
\[
\langle G_1,v\rangle_{
W^{-1,\sigma},
W^{1,\sigma'}_{\tilde\Gamma}}
:=
-\int_\Omega
\phi\nabla y\cdot\nabla v\,dx
\qquad
\forall
v\in W^{1,\sigma'}_{\tilde\Gamma}(\Omega).
\]
Then, for all
\(v\in W^{1,\sigma'}_{\tilde\Gamma}(\Omega)\),
$
|\langle G_1,v\rangle|
\le
\|\phi\nabla y\|_{L^\sigma(\Omega,\mathbb R^d)}
\|\nabla v\|_{L^{\sigma'}(\Omega,\mathbb R^d)}
$
and hence
\[
|\langle G_1,v\rangle|
\le
\|\phi\|_{L^{\hat q}(\Omega)}
\|\nabla y\|_{L^s(\Omega,\mathbb R^d)}
\|v\|_{W^{1,\sigma'}(\Omega)}.
\]
Consequently,
$
\|G_1\|_{W^{-1,\sigma}(\Omega)}
\le
C
\|\phi\|_{L^{\hat q}(\Omega)}
\|\nabla y\|_{L^s(\Omega,\mathbb R^d)}.$ Since \(\sigma\in(2,s)\), Proposition~2.4 applies.
Therefore, there exists a unique
$
z_{u,\phi}\in
W^{1,\sigma}_{\tilde\Gamma}(\Omega)$ 
such that
\[
\int_\Omega
u\nabla z_{u,\phi}\cdot\nabla v\,dx
=
\langle G_1,v\rangle
\qquad
\forall
v\in W^{1,\sigma'}_{\tilde\Gamma}(\Omega),
\]
and
$
\|z_{u,\phi}\|_{W^{1,\sigma}(\Omega)}
\le
K_2(\sigma)
\|G_1\|_{W^{-1,\sigma}(\Omega)}.$
Since
$
\|y\|_{W^{1,s}(\Omega)}
\le
C_y
\;
\forall u\in\feas,$ 
we  obtain
$
\|z_{u,\phi}\|_{W^{1,\sigma}(\Omega)}
\le
C
\|\phi\|_{L^{\hat q}(\Omega)}.
$
This proves the claim.
\end{proof}
Before continuing with our argument, we supply an auxiliary lemma
on the relationship of integrability and interpolation indices
required below.
\begin{lemma}\label{lem:exponent_condition}
Let $q>\frac{2s}{s-2}$. Define $t$ by
$1/t+1/q=1/2$ and let $\theta_1$ satisfy
$\displaystyle
\frac1t=\frac{\theta_1}{2}+\frac{1-\theta_1}{s} \label{theta1}$. 
Let $\tilde t > 0 $ and $\tilde q > 0$ satisfy $1/{\tilde t}+1/{\tilde q}=1/2$,
and define $\theta_2$ by
\begin{equation}\label{eq:theta2_def}
\frac1{\tilde t}
=
\frac{\theta_2}{2}+\frac{1-\theta_2}{s}.
\end{equation}
If $\tilde q
> \tilde{q}_*(q) \coloneqq
\frac{2s}{s-2}
+
\frac{2s\,q}{q(s-2)-2s}$, then $\theta_2 \in (0,1)$ and $\theta_2+\theta_1\theta_2>1$. 
\end{lemma}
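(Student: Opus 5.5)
The plan is a direct computation: I will solve the two interpolation identities explicitly for $\theta_1$ and $\theta_2$, and then reduce both claims to inequalities in $\tilde q$. Throughout, write $a \coloneqq \frac{2s}{s-2}$, so that $\frac12 - \frac1s = \frac1a$ and the hypothesis reads $q > a$.

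\emph{Step 1: closed forms for $\theta_1$ and $\theta_2$.} From $\frac1t = \frac12 - \frac1q$ and $\frac1t = \frac{\theta_1}{2} + \frac{1-\theta_1}{s}$ we get $\frac1t - \frac1s = \theta_1\bigl(\frac12 - \frac1s\bigr)$. Hence
\[
\theta_1 = \frac{\frac12 - \frac1s - \frac1q}{\frac12-\frac1s} = 1 - \frac{a}{q}.
\]
The same computation, with $(t,q)$ replaced by $(\tilde t,\tilde q)$ in \eqref{eq:theta2_def}, gives $\theta_2 = 1 - \frac{a}{\tilde q}$. Since $q > a$, we have $\theta_1 \in (0,1)$.

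\emph{Step 2: $\theta_2\in(0,1)$.} Since $q(s-2) > 2s$, the second summand in $\tilde q_*(q)$ is positive, so $\tilde q > \tilde q_*(q) > a > 0$. Therefore $\theta_2 = 1 - a/\tilde q \in (0,1)$.

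\emph{Step 3: $\theta_2+\theta_1\theta_2>1$, the only real step.} Write the condition as $\theta_2(1+\theta_1) > 1$. Using $1+\theta_1 = 2 - \frac aq = \frac{2q-a}{q} > 0$, it is equivalent to
\[
1 - \frac{a}{\tilde q} > \frac{q}{2q-a},
\quad\text{that is,}\quad
\frac{a}{\tilde q} < \frac{q-a}{2q-a}.
\]
Since $q - a > 0$ and $\tilde q > 0$, this is in turn equivalent to
\[
\tilde q > \frac{a(2q-a)}{q-a} = a + \frac{aq}{q-a}.
\]
It remains to verify that $\frac{aq}{q-a} = \frac{2sq/(s-2)}{(q(s-2)-2s)/(s-2)} = \frac{2sq}{q(s-2)-2s}$. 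So the threshold is exactly $\tilde q_*(q)$, and the hypothesis $\tilde q > \tilde q_*(q)$ gives the claim.

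The main obstacle is only the bookkeeping in Step 3. The care needed is to check that every quantity we divide by or multiply through, namely $2q-a$, $q-a$ and $\tilde q$, is positive, so that no inequality flips. All three positivities follow from $q>a$ and $\tilde q > a$.
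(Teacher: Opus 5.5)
Your proposal is correct and follows essentially the same route as the paper, which also derives $\theta_1 = 1-\frac{2s}{q(s-2)}$ and $\theta_2 = 1-\frac{2s}{\tilde q(s-2)}$, and rewrites $\tilde q_*(q) = \frac{2s}{s-2}\bigl(1+\frac{1}{\theta_1}\bigr)$ so that $\tilde q > \tilde q_*(q)$ becomes exactly $\theta_2 > \frac{1}{1+\theta_1}$. The differences are cosmetic: you write the argument as a chain of equivalences, and you obtain $\theta_2>0$ from $\tilde q > \frac{2s}{s-2}$ rather than from $\theta_2 > \frac{1}{1+\theta_1}$.
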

\begin{proof}
By definition, $t \in (2,s)$ and $\theta_1 \in (0,1)$.  
Moreover, we can rewrite $\theta_1$ in terms of $q$ as
\begin{align}
\theta_1
&=
1-\frac{2s}{q(s-2)} = \frac{q(s-2)-2s}{q(s-2)}, 
\label{eq:theta1_formula}
\end{align}
and consequently, we have that 
$ \tilde{q}_*(q) = \frac{2s}{s-2}\Big(1 + \frac{1}{\theta_1}\Big) = \frac{2s(1+\theta_1)}
{\theta_1(s-2)}.$
If $\tilde q > \tilde{q}_*(q)$, then $ 
\frac{2s}{\tilde q(s-2)}
<
\frac{\theta_1}{1+\theta_1}$.
Hence, using the definition \eqref{eq:theta2_def} of $\theta_2$ we obtain that $
\theta_2
=
1-\frac{2s}{\tilde q(s-2)} > \frac1{1+\theta_1},$ which implies that $\theta_2 \in (0,1)$ and $\theta_2+\theta_1\theta_2>1.$

\end{proof}

\begin{lemma}\label{lem:S_second_derivative}
Let $q > \frac{2s}{s-2}$ and let $\tilde q
> \tilde{q}_*(q)$ (see \cref{lem:exponent_condition}).  
Let $u \in \feas$. Then the map
$\displaystyle
S' :
(\feas,\|\cdot\|_{L^{\tilde q}(\Omega)})
\to
\mathcal L(L^{\tilde q}(\Omega), W^{1,2}(\Omega))
$
is Fr\'echet differentiable at $u$ with respect to feasible variations. 
Specifically, let $\phi$, $\psi$ satisfy $u+\phi$, $u + \psi \in \feas$ and recall that $z_{u, \phi} = S'(u)[\phi]$ and
$z_{u, \psi} = S'(u)[\psi]$ are the corresponding unique solutions to \eqref{eq:def_z_u_phi}. Let
$z_{u,\phi,\psi} \in W^{1,2}_{\tilde\Gamma}(\Omega)$ be the unique solution to
\begin{equation}\label{eq:def_z_phipsi}
\int_\Omega u \nabla z_{u,\phi,\psi} \cdot \nabla v \, dx
=
-\int_\Omega \psi \nabla z_{\phi} \cdot \nabla v \, dx
-
\int_\Omega \phi \nabla z_{\psi} \cdot \nabla v \, dx,
\qquad
\forall v \in W^{1,2}_{\tilde\Gamma}(\Omega).
\end{equation}
Then the second derivative of $S$ at $u$ is
$
S''(u)[\phi,\psi] = z_{u,\phi,\psi},$
and

\[
\underset{\substack{\|\phi\|_{L^{\tilde q}} \to 0\\ u+\phi\in\feas}}\lim
\;
\sup_{\psi \in L^{\tilde q}(\Omega)\setminus\{0\}}
\frac{
\| S'(u+\phi)[\psi] - S'(u)[\psi] - z_{u,\phi,\psi} \|_{W^{1,2}}
}{
\|\phi\|_{L^{\tilde q}}\,
\|\psi\|_{L^{\tilde q}}
}
= 0.
\]
In addition, there exists $C > 0$ such that $\|z_{u,\phi,\psi}\|_{W^{1,2}} \le C \|\phi\|_{L^{\hat{q}}}\|\psi\|_{L^{\hat{q}}}$
for $\hat{q} = 4s/(s - 2)$.
\end{lemma}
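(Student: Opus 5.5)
Following the pattern of \cref{lem:S_derivative}, we first settle well-posedness of \eqref{eq:def_z_phipsi} and the bound on $z_{u,\phi,\psi}$, and then derive an equation for the remainder
\[
r \coloneqq S'(u+\phi)[\psi]-S'(u)[\psi]-z_{u,\phi,\psi},
\]
to be handled by a Hölder–interpolation argument.

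\textbf{Well-posedness and the bound.} Set $\sigma$ through $1/\sigma = 1/s + 1/\hat q$ with $\hat q = 4s/(s-2)$. This gives $\sigma = 4s/(s+2) \in (2,s)$ and $1/\hat q + 1/\sigma = 1/2$. By \cref{lem:firstvar-Lqtilde}, $\|z_{u,\psi}\|_{W^{1,\sigma}} \le C\|\psi\|_{L^{\hat q}}$. Hölder with $1/\hat q + 1/\sigma = 1/2$ then gives $\|\phi\nabla z_{u,\psi}\|_{L^2} \le C\|\phi\|_{L^{\hat q}}\|\psi\|_{L^{\hat q}}$, and symmetrically for $\psi\nabla z_{u,\phi}$. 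The right-hand side of \eqref{eq:def_z_phipsi} therefore lies in $W^{-1,2}(\Omega)$. Lax–Milgram yields existence and uniqueness of $z_{u,\phi,\psi}$ together with the claimed bound.

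\textbf{Remainder equation.} Write $y_\phi = S(u+\phi)$, $z^\phi_\psi = S'(u+\phi)[\psi]$ and $z_\psi = S'(u)[\psi]$. Subtracting the defining equations \eqref{eq:def_z_u_phi} at $u+\phi$ and at $u$ gives
\[
\int u\nabla(z^\phi_\psi - z_\psi)\cdot\nabla v = -\int \phi\nabla z^\phi_\psi\cdot\nabla v - \int \psi\nabla(y_\phi - y)\cdot\nabla v .
\]
Subtracting \eqref{eq:def_z_phipsi} from this yields
\[
\int u\nabla r\cdot\nabla v = -\int \phi\nabla(z^\phi_\psi - z_\psi)\cdot\nabla v - \int \psi\nabla(y_\phi - y - z_{u,\phi})\cdot\nabla v .
\]
We test with $v = r$ and use coercivity together with Poincaré. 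Hölder with $1/\tilde q + 1/\tilde t + 1/2 = 1$ then gives
\[
u_\ell\|r\|_{W^{1,2}} \lesssim \|\phi\|_{L^{\tilde q}}\|\nabla(z^\phi_\psi - z_\psi)\|_{L^{\tilde t}} + \|\psi\|_{L^{\tilde q}}\|\nabla(y_\phi - y - z_{u,\phi})\|_{L^{\tilde t}} .
\]
Each $L^{\tilde t}$-norm is interpolated between $L^2$ and $L^s$ with exponent $\theta_2$ from \cref{lem:exponent_condition}.

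\textbf{The second term.} The $L^s$ parts are uniformly bounded: $y$ and $y_\phi$ are bounded in $W^{1,s}$ by \cref{prp-new:elementary_properties}, and $z_{u,\phi}$ is bounded in $W^{1,s}$ by the $L^\infty$ estimate in \cref{lem:S_derivative}, since $\|\phi\|_{L^\infty} \le u_u - u_\ell$. For the $L^2$ part, the proof of \cref{lem:S_derivative} gives the quantitative bound $\|y_\phi - y - z_{u,\phi}\|_{W^{1,2}} \le C\|\phi\|_{L^q}^{1+\theta_1} \le C\|\phi\|_{L^{\tilde q}}^{1+\theta_1}$, where $\theta_1$ is the interpolation exponent there, identified with that of \cref{lem:exponent_condition}. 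The term is therefore $\lesssim \|\psi\|_{L^{\tilde q}}\|\phi\|_{L^{\tilde q}}^{(1+\theta_1)\theta_2}$, and $(1+\theta_1)\theta_2 > 1$ is exactly the conclusion of \cref{lem:exponent_condition}.

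\textbf{The first term.} We redo the estimate of \cref{lem:Sprime_lipschitz} with $u_1 = u+\phi$, $u_2 = u$, but place $\psi$ in $L^{\tilde q}$ instead of $L^\infty$. The difference $z^\phi_\psi - z_\psi$ solves
\[
u\nabla(\cdot) = -\psi\nabla(y_\phi - y) - \phi\nabla z^\phi_\psi .
\]
By Hölder and \cref{lem:S_Lipschitz}, $\|\psi\nabla(y_\phi - y)\|_{L^2}$ is bounded by $\|\psi\|_{L^{\tilde q}}\|\nabla(y_\phi - y)\|_{L^{\tilde t}}$, and this is $\lesssim \|\psi\|_{L^{\tilde q}}\|\phi\|_{L^q}^{\theta_2}$ after interpolation. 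The second source is bounded using \cref{lem:firstvar-Lqtilde}: $\|\phi\nabla z^\phi_\psi\|_{L^2} \lesssim \|\phi\|_{L^{\tilde q}}\|\psi\|_{L^{\tilde q}}$, which requires $\tilde q \ge \hat q$-type compatibility. Hence $\|\nabla(z^\phi_\psi - z_\psi)\|_{L^2} \lesssim \|\psi\|_{L^{\tilde q}}\|\phi\|^{\theta_2}$ for small $\phi$. Its $W^{1,\sigma}$-norm is bounded by $C\|\psi\|_{L^{\tilde q}}$ via \cref{lem:firstvar-Lqtilde}, interpolating between $2$ and $\sigma$ rather than $s$. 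Multiplying by $\|\phi\|_{L^{\tilde q}}$ gives a factor $\|\phi\|^{1+\text{positive}}\|\psi\|$. Dividing by $\|\phi\|\|\psi\|$ and taking the supremum over $\psi$ then proves the limit.

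\textbf{Main obstacle.} The critical step is the exponent bookkeeping. We must make the interpolation targets ($s$ versus $\sigma$, as dictated by the regularity that \cref{lem:firstvar-Lqtilde} provides for $\psi \in L^{\tilde q}$) consistent with $1/\tilde t + 1/\tilde q = 1/2$. We must also check that $\tilde q > \tilde q_*(q)$ indeed forces $\tilde q$ large enough for \cref{lem:firstvar-Lqtilde} to apply. If this mismatch appears, the first thing to try is running everything in $W^{1,\sigma}$ with $\sigma$ close to $s$, absorbing the loss into the strict inequality $\theta_2 + \theta_1\theta_2 > 1$.
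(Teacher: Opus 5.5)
Your proposal is essentially the paper's proof. It uses the same remainder identity, and it treats the $\psi$-term by $L^2$--$L^s$ interpolation with $\theta_2$ and the rate $\|\phi\|_{L^q}^{1+\theta_1}$ from \cref{lem:S_derivative}, closed by \cref{lem:exponent_condition}. It treats the $\phi$-term by $L^2$--$L^\sigma$ interpolation with $\sigma=4s/(s+2)$, an energy estimate for $z^\phi_\psi-z_\psi$, and \cref{lem:firstvar-Lqtilde}; the compatibility you flag is immediate, since $\tilde q_*(q)=\frac{2s}{s-2}+\frac{2s}{(s-2)-2s/q}>\frac{4s}{s-2}$.

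One caveat, shared with the paper: the step $\|\phi\|_{L^q}\lesssim\|\phi\|_{L^{\tilde q}}$ in the $\psi$-term needs $\tilde q\ge q$, which $\tilde q>\tilde q_*(q)$ does not guarantee once $q>\frac{3+\sqrt5}{2}\cdot\frac{2s}{s-2}$, so that condition should be stated explicitly.
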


\begin{proof}
We set $y \coloneqq S(u)$, $y_\phi \coloneqq S(u+\phi)$ and recall the homogenized states
$\tilde y \coloneqq y-\tilde f_D$ and $\tilde y_\phi \coloneqq y_\phi -\tilde f_D$.
Moreover, we know that $z_{u, \psi}$ and $z_{u, \psi}[\phi]$ solve
\begin{align}
\int_\Omega u\,\nabla z_{u, \psi}\cdot \nabla v\,dx
&=
-\int_\Omega \psi\,\nabla y\cdot \nabla v\,dx,
& \forall v\in W^{1,2}_{\tilde\Gamma}(\Omega), \label{eq:zpsi_u_Sprimeproof}\\
\int_\Omega (u+\phi)\,\nabla z_{u, \psi}[\phi]\cdot \nabla v\,dx
&=
-\int_\Omega \psi\,\nabla y_\phi\cdot \nabla v\,dx,
&\forall v\in W^{1,2}_{\tilde\Gamma}(\Omega). \label{eq:zpsi_uphi_Sprimeproof}
\end{align}
We define the remainder
$
r_\phi \coloneqq z_{u, \psi}[\phi]-z_{u, \psi}-z_{u,\phi,\psi}.
$
Thus subtracting \eqref{eq:def_z_phipsi} and \eqref{eq:zpsi_u_Sprimeproof} from \eqref{eq:zpsi_uphi_Sprimeproof} yields
\begin{equation}\label{eq:rphi_identity_Sprimeproof}
\int_\Omega u\,\nabla r_\phi\cdot\nabla v\,dx
=
-\int_\Omega \psi\,\nabla\big((y_\phi-y)-z_{u, \phi}\big)\cdot\nabla v\,dx
-\int_\Omega \phi\,\nabla\big(z_{u, \psi}[\phi]-z_{u, \psi}\big)\cdot\nabla v\,dx .
\end{equation}
Testing with $v=r_\phi$, using coercivity $u\ge u_\ell>0$ and Poincar\'e's inequality, we obtain
\begin{equation}\label{eq:rphi_est_Sprimeproof}
\|r_\phi\|_{W^{1,2}}
\le
C\Big(
\|\psi\,\nabla((y_\phi-y)-z_{u, \phi})\|_{L^2}
+
\|\phi\,\nabla(z_{u, \psi}[\phi]-z_{u, \psi})\|_{L^2}
\Big)
\end{equation}
for some $C > 0$. We estimate the two terms on the right-hand side of \eqref{eq:rphi_est_Sprimeproof} separately.

For the first term in \eqref{eq:rphi_est_Sprimeproof}, we set $d_\phi \coloneqq y_{\phi}-y-z_{u,\phi} \in W^{1,2}_{\tilde\Gamma}(\Omega)$.
Let $\tilde t\in(2,s)$ satisfy $1/\tilde{q} + 1/\tilde{t} = 1/2$. 
We interpolate between $L^2(\Omega)$ and $L^s(\Omega)$ and define $\theta_2$ by
$ \frac{1}{\tilde t}
=
\frac{\theta_2}{2}
+
\frac{1-\theta_2}{s},$ 
where $\theta_2 \in (0,1)$ because of $\tilde{t} \in (2,s)$. In combination with H\"older's inequality, we obtain
\begin{equation}\label{eq:interpolation_en}
\|\psi \nabla d_\phi\|_{L^2} \le  \|\psi\|_{L^{\tilde q}} \|\nabla d_\phi\|_{L^{\tilde t}}
\le C  \|\psi\|_{L^{\tilde q}} \|\nabla d_\phi\|_{L^2}^{\theta_2} \|\nabla d_\phi\|_{L^s}^{1-\theta_2}
\end{equation}
for some $C > 0$.
From \cref{lem:S_derivative}, we obtain
\begin{equation}\label{eq:lemma311_L2_bound}
\|\nabla d_\phi\|_{L^2} \le \|d_\phi\|_{W^{1,2}}
=
\|S(u+\phi)-S(u)-S'(u)[\phi]\|_{W^{1,2}}
\le C \|\phi\|_{L^q}^{1+\theta_1}
\end{equation}
with  $\theta_1$ from the proof of \cref{lem:S_derivative} or, equivalently, 
\cref{lem:exponent_condition}. On the other hand, by the $W^{1,s}$-regularity of the state and of the first variation,
we have $\|y_{\phi}-y\|_{W^{1,s}} \le C$, $\|z_{\phi}\|_{W^{1,s}} \le C$, and in turn
$\|\nabla d_\phi\|_{L^s} \le C$ for some large enough $C > 0$ that is independent of $\phi$.
Consequently, for some $C > 0$, we can combine the considerations to deduce that $
\|\psi \nabla d_\phi\|_{L^2}
\le
C\|\psi\|_{L^{\tilde q}}
\|\phi\|_{L^q}^{\theta_2+\theta_1\theta_2},$ 
where \cref{lem:exponent_condition} yields that our choice for $\tilde q$ implies
$\theta_2+\theta_1\theta_2>1$ and $L^{\tilde q}(\Omega) \hookrightarrow L^q(\Omega)$.
We thus obtain for some $C > 0$ that
\begin{equation}\label{eq:first_term_final}
\sup_{0 \neq \psi \in \feas - u} \frac{\|\psi\nabla(y_{\phi}-y -z_{\phi})\|_{L^2}}{\|\phi\|_{L^{\tilde q}}\|\psi\|_{L^{\tilde q}}}
\le C \|\phi\|_{L^{\tilde q}}^{\theta_2+\theta_1\theta_2 - 1}.
\end{equation}

We now estimate the second term in \eqref{eq:rphi_est_Sprimeproof}.
As before, recall $\tilde{q}$ and $ \tilde{t}$ satisfy $1/\tilde{q} + 1/\tilde{t}= 1/2$, and set $
\sigma \coloneqq  \frac{4s}{s+2}, 
\;
\theta_3 \coloneqq  \frac{\frac{1}{\tilde{t}}-\frac{1}{\sigma}}{\frac{1}{2}-\frac{1}{\sigma}}.$ 
Because $\tilde{q} > 1$, we have
$
\frac{1}{\tilde{t}} = \frac{1}{2} - \frac{1}{\tilde{q}} < \frac{1}{2},
$
and hence $\tilde{t} > 2$. Moreover, since $\tilde{q} > \frac{4s}{s-2}$, we obtain
$
\frac{1}{\tilde{t}}
=
\frac{1}{2} - \frac{1}{\tilde{q}}
>
\frac{1}{2} - \frac{s-2}{4s}
=
\frac{s+2}{4s}
=
\frac{1}{\sigma},
$
which implies $\tilde{t} < \sigma$. Consequently, $\tilde{t} \in (2,\sigma)$. Furthermore, since $s>2$, we have
$
2 < \frac{4s}{s+2} < s,
$
and thus $\sigma \in (2,s)$.  Finally, since $\frac{1}{\sigma} < \frac{1}{\tilde{t}} < \frac{1}{2}$, we have $\theta_3 \in (0,1)$.

By H\"older's inequality and interpolation, we then obtain
\begin{align*}
\|\phi \nabla(z_{u, \psi}[\phi]-z_{u, \psi})\|_{L^2}
&\le
\|\phi\|_{L^{\tilde{q}}}
\|\nabla(z_{u, \psi}[\phi]-z_{u, \psi})\|_{L^{\tilde{t}}} \\
&\le 
\|\phi\|_{L^{\tilde{q}}}
\|\nabla z_{u, \psi}[\phi] - \nabla z_{u, \psi}\|_{L^{2}}^{\theta_3}
\|\nabla z_{u, \psi}[\phi] - \nabla z_{u, \psi}\|_{L^{\sigma}}^{1 - \theta_3}.
\end{align*}
Then \cref{lem:firstvar-Lqtilde} implies $\|\nabla z_{u, \psi}\|_{L^{\sigma}} \le C\|\psi\|_{L^{q_2}}$
for some $C > 0$ and $q_2 = \frac{4s}{s-2}$. Inspecting \cref{lem:firstvar-Lqtilde} and the weak formulations
\eqref{eq:zpsi_u_Sprimeproof} and \eqref{eq:zpsi_uphi_Sprimeproof}, we obtain 
that $\|\nabla z_{u, \psi}[\phi]\|_{L^{\sigma}} \le C\|\psi\|_{L^{q_2}}$ too, for the same constant $C > 0$.
Combining this with the triangle inequality yields
\begin{align*}
\|\phi \nabla(z_{u, \psi}[\phi]-z_{u, \psi})\|_{L^2} 
&\le C
\|\phi\|_{L^{\tilde{q}}}
\|\nabla z_{u, \psi}[\phi] - \nabla z_{u, \psi}\|_{L^{2}}^{\theta_3}
\|\psi\|_{L^{q_2}}^{1 - \theta_3},
\end{align*}
for some $C > 0$. Similar to the derivation of \eqref{eq:rphi_est_Sprimeproof}, we obtain
\begin{gather*}
\|z_{u, \psi}[\phi_n]-z_{u, \psi}\|_{W^{1,2}}
\le
C\bigl(
\|\psi\,\nabla(y_{\phi_n}-y)\|_{L^2}
+
\|\phi_n\,\nabla z_{u, \psi}[\phi_n]\|_{L^2}
\bigr),
\end{gather*}
for some $C > 0$. Note that $\frac{1}{q_2} + \frac{1}{\sigma}= \frac{1}{2} $ and recall that $ \sigma \in (2,s).$
Then, again by H\"older's inequality, interpolation, and \cref{lem:firstvar-Lqtilde} we have
\begin{align*}
\|\psi\,\nabla(y_{\phi}-y)\|_{L^2}
&\le
\|\psi\|_{L^{q_2}}
\|\nabla(y_{\phi}-y)\|_{L^{\sigma}}
\le 
\|\psi\|_{L^{q_2}}
\|\nabla(y_{\phi}-y)\|_{L^{2}}^{\theta_4}
\|\nabla(y_{\phi}-y)\|_{L^{s}}^{1 - \theta_4},\\
\|\phi \nabla z_{u, \psi}[\phi]\|_{L^2}
&\le
\|\phi\|_{L^{q_2}}
\|\nabla z_{u, \psi}[\phi]\|_{L^{\sigma}}
\le 
C\|\phi\|_{L^{q_2}}\|\psi\|_{L^{q_2}},
\end{align*}
where $q_2= \frac{4s}{s-2}$ , and $\theta_4\in(0,1)$ is given by 
$
\theta_4
=
\frac{\frac{s+2}{4s}-\frac{1}{s}}{\frac{1}{2}-\frac{1}{s}}
=
\frac{\frac{s-2}{4s}}{\frac{s-2}{2s}}
=
\frac{1}{2}.
$
In combination, we obtain $
\|\phi \nabla(z_{u, \psi}[\phi]-z_{u, \psi})\|_{L^2} 
\le C \|\phi\|_{L^{\tilde{q}}}\|\psi\|_{L^{q_2}}(\|\phi\|_{L^{q_2}} + c\|\phi\|_{L^{q}}^{\theta_4})^{\theta_3} ,$ 
for some $C$, $c > 0$. One can show that $\tilde{q}_*(q)$ is monotonically decreasing
with respect to $q$ and converges to $q_2$ so that we have $\tilde{q} > q_2$. Consequently,
the continuous embedding $L^{\tilde{q}}(\Omega) \hookrightarrow L^{q_2}(\Omega)$ implies that, for some $C > 0$,
\begin{align}
\|\phi \nabla (z_{u, \psi}[\phi] - z_{u, \psi})\|_{L^2}
\le C \|\phi\|_{L^{\tilde{q}}} \|\psi\|_{L^{\tilde{q}}}
\bigl( \|\phi\|_{L^{q_2}} + c\,\|\phi\|_{L^{q}}^{\theta_4} \bigr)^{\theta_3}. \label{eq:second_term_final}
\end{align}

Inserting \eqref{eq:first_term_final} and \eqref{eq:second_term_final} into
\eqref{eq:rphi_est_Sprimeproof}, we obtain
\[
\frac{\|r_\phi\|_{W^{1,2}}}{\|\phi\|_{L^{\tilde q}}\|\psi\|_{L^{\tilde q}}}
\le
C\bigl(
\|\phi\|_{L^{\tilde q}}^{\theta_2+\theta_1\theta_2 - 1}
+
\bigl( \|\phi\|_{L^{q_2}} + c\,\|\phi\|_{L^{q}}^{\theta_4} \bigr)^{\theta_3}
\bigr)
\]
for some $C > 0$.

Since we have $\|\phi\|_{L^r} \to 0$ for all $r \in [0,\infty)$ if $\|\phi\|_{L^{\tilde q}} \to 0$ due to the $L^\infty$-bounds
on $\feas$ and $u$ and thus $\phi$ as well as $\theta_2 + \theta_1 \theta_2 - 1 > 0$, $\theta_3> 0$, $\theta_4 > 0$, we obtain
that the right-hand side tends to zero whenever $\|\phi\|_{L^{\tilde q}} \to 0$, which proves that
$
S':
(\feas,\|\cdot\|_{L^{\tilde q}(\Omega)})
\to
\mathcal L(L^{\tilde q}(\Omega),W^{1,2}(\Omega)) $
is Fr\'echet differentiable at $u$ with respect to feasible variations, with
$
S''(u)[\phi,\psi]=z_{u,\phi,\psi}.$

It remains to prove the continuity estimate on the bilinear mapping $(\feas - u) \times (\feas - u) \ni (\phi,\psi) \mapsto z_{u,\phi,\psi} \in W^{1,2}_{\tilde\Gamma}(\Omega)$,
where the bilinearity follows directly from \eqref{eq:def_z_phipsi}. We test \eqref{eq:def_z_phipsi} with $v = z_{u,\phi,\psi}$, use $u \ge u_\ell > 0$, 
and choose $\sigma$, $q_2$ as above. Then we obtain
\[ \|z_{u,\phi,\psi}\|_{W^{1,2}} \le C(\|\psi\|_{L^{q_2}}\|z_{u, \phi}\|_{W^{1,\sigma}} + \|\phi\|_{L^{q_2}}\|z_{u, \psi}\|_{W^{1,\sigma}})
\]
for some $C > 0$ by Poincar\'{e}’s inequality on $W^{1,2}_{\tilde{\Gamma}}(\Omega)$, and Hölder’s inequality.
Then the claim follows from \cref{lem:firstvar-Lqtilde}.
\end{proof}

The following theorem establishes
the second-order differentiability of the reduced objective
functional $j = F\circ S$.

\begin{theorem}\label{thm:second_order}
Let $q>\frac{2s}{s-2}$ and $\tilde q>\tilde q_*(q)$ (see \cref{lem:exponent_condition}). Let $F:W^{1,2}_{\tilde\Gamma}(\Omega)\to\mathbb R$
be twice continuously Fr\'echet differentiable and assume that
\[
F'(y)\in W^{-1,2}(\Omega),
\qquad
F''(y)\in \mathcal L(W^{1,2}_{\tilde\Gamma}(\Omega),W^{-1,2}(\Omega))
\qquad
\text{for all } y\in W^{1,2}_{\tilde\Gamma}(\Omega),
\]
with $F'$ and $F''$ Lipschitz continuous on $W^{1,2}_{\tilde\Gamma}(\Omega)$.

Then $j \coloneqq F\circ S:(\feas,\|\cdot\|_{L^{\tilde q}(\Omega)})\to\mathbb R$
is twice Fr\'echet differentiable on $\feas$ for feasible variations.
More precisely, for $u\in\feas$, let
\[
y\coloneqq S(u),\qquad p\coloneqq S_p(y,u),
\qquad z_{u, \phi} \coloneqq S'(u)[\phi],\qquad z_{u, \psi}\coloneqq S'(u)[\psi],
\qquad z_{u,\phi,\psi}\coloneqq S''(u)[\phi,\psi].
\]
Then, for all $\phi$, $\psi\in U - u$, we have
\begin{align} \label{j'}
j'(u)[\phi]
=
\langle F'(y),z_{u, \phi}\rangle_{W^{-1,2},W^{1,2}_{\tilde\Gamma}}
=
\int_\Omega \phi\,\nabla y\cdot\nabla p\,dx,
\end{align}
and
\begin{align} \label{j''}
j''(u)[\phi,\psi]
&=
\langle F''(y)z_{u, \psi},z_{u, \phi}\rangle_{W^{-1,2},W^{1,2}_{\tilde\Gamma}}
+
\int_\Omega \psi\,\nabla z_{u, \phi}\cdot\nabla p\,dx
+
\int_\Omega \phi\,\nabla z_{u, \psi}\cdot\nabla p\,dx .
\end{align}
Moreover,
\[
\lim_{\substack{\eta\to 0\\ u+\eta\in\feas}}
\frac{
\bigl|j(u+\eta)-j(u)-j'(u)[\eta]-\frac12\,j''(u)[\eta,\eta]\bigr|
}{
\|\eta\|_{L^{\tilde q}(\Omega)}^2
}
=0,
\]
and the bilinear map $j''(u):L^{\tilde q}(\Omega)\times L^{\tilde q}(\Omega)\to\mathbb \R$
is bounded.
\end{theorem}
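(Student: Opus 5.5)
The plan is to obtain the result by combining the chain rule with the adjoint identity, using the first- and second-order expansions of $S$ already established. Throughout, $\tilde q>\tilde q_*(q)>\frac{4s}{s-2}$, so the $L^{\tilde q}$-norm dominates every $L^r$-norm used earlier, with $r\in\{q,\hat q,4s/(s-2)\}$.

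\textbf{First derivative.} Formula \eqref{j'} is \cref{thm:old_regularity_of_first_derivative} combined with the adjoint computation in its proof: test \eqref{eq:adjoint} with $z_{u,\phi}$ and \eqref{eq:def_z_u_phi} with $p$.

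\textbf{Second derivative.} To identify $j''$, I will differentiate $u\mapsto j'(u)[\psi]=\langle F'(S(u)),S'(u)[\psi]\rangle$ in direction $\phi$. By the chain and product rules (valid by \cref{lem:S_derivative,lem:S_second_derivative} and the $C^2$ assumption on $F$) this gives
\[
j''(u)[\phi,\psi]=\langle F''(y)z_{u,\phi},z_{u,\psi}\rangle+\langle F'(y),z_{u,\phi,\psi}\rangle .
\]
I then test the adjoint equation with $z_{u,\phi,\psi}$ and \eqref{eq:def_z_phipsi} with $p$, which converts the second term into $\int_\Omega\psi\nabla z_{u,\phi}\cdot\nabla p+\int_\Omega\phi\nabla z_{u,\psi}\cdot\nabla p$. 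Symmetry of $F''(y)$ then yields \eqref{j''}.

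\textbf{Boundedness.} Boundedness of the bilinear form follows from three estimates:
\begin{itemize}
\item $|\langle F''(y)z_{u,\psi},z_{u,\phi}\rangle|\le\|F''(y)\|\,\|z_{u,\phi}\|_{W^{1,2}}\|z_{u,\psi}\|_{W^{1,2}}$, and each factor is $\le C\|\cdot\|_{L^q}$ by \cref{lem:S_derivative}.
\item For the integral terms, apply H\"older with exponents $\tilde q,\sigma,s$, where $\sigma=4s/(s+2)$. This requires $1/\tilde q+1/\sigma+1/s\le1$, which holds because $1/\sigma+1/s=(s+6)/(4s)<1$ and $\tilde q$ is large. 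Then \cref{lem:firstvar-Lqtilde} gives $\|\nabla z_{u,\phi}\|_{L^\sigma}\le C\|\phi\|_{L^{\tilde q}}$.
\item The uniform $W^{1,s}$ bound on $p$ from the proof of \cref{thm:old_regularity_of_first_derivative} controls $\nabla p$. If $F'(y)\in W^{-1,s}$ is not assumed here, I would instead place $\nabla p$ in $L^2$ and use the bound $\|\psi\nabla z_{u,\phi}\|_{L^2}\le C\|\psi\|_{L^{\tilde q}}\|\phi\|_{L^{\tilde q}}$, exactly as in the last step of \cref{lem:S_second_derivative}.
\end{itemize}

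\textbf{Taylor remainder.} For the second-order expansion I will use the integral form
\[
j(u+\eta)-j(u)-j'(u)[\eta]=\int_0^1\bigl(j'(u+\tau\eta)-j'(u)\bigr)[\eta]\,d\tau,
\]
which is legitimate because $u+\tau\eta\in\feas$ by convexity and $\tau\mapsto j(u+\tau\eta)$ is differentiable. I will write
\[
j'(u+\tau\eta)[\eta]=\langle F'(S(u+\tau\eta)),S'(u+\tau\eta)[\eta]\rangle
\]
and expand. First, $F'(S(u+\tau\eta))-F'(y)=F''(y)(S(u+\tau\eta)-y)+o(\|S(u+\tau\eta)-y\|_{W^{1,2}})$, where $\|S(u+\tau\eta)-y\|_{W^{1,2}}\le C\tau\|\eta\|_{L^q}$ by \cref{lem:S_Lipschitz}. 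Next, $S(u+\tau\eta)-y=\tau z_{u,\eta}+o(\tau\|\eta\|)$ by \cref{lem:S_derivative}. Finally, $S'(u+\tau\eta)[\eta]-z_{u,\eta}=\tau z_{u,\eta,\eta}+o(\tau\|\eta\|_{L^{\tilde q}}^2)$ by \cref{lem:S_second_derivative}, applied with direction $\tau\eta$ and $\psi=\eta$, using bilinearity. Collecting terms gives
\[
j'(u+\tau\eta)[\eta]-j'(u)[\eta]=\tau\bigl(\langle F''(y)z_{u,\eta},z_{u,\eta}\rangle+\langle F'(y),z_{u,\eta,\eta}\rangle\bigr)+o(\tau\|\eta\|_{L^{\tilde q}}^2),
\]
with the $o$-term uniform in $\tau$. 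The cross terms, such as $(F'(S(u+\tau\eta))-F'(y))(S'(u+\tau\eta)[\eta]-z_{u,\eta})$, are controlled by Lipschitz bounds times \cref{lem:Sprime_lipschitz} and are $O(\|\eta\|_{L^q}^2)$ times a factor tending to zero. Integrating over $\tau$ produces $\tfrac12 j''(u)[\eta,\eta]+o(\|\eta\|_{L^{\tilde q}}^2)$.

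\textbf{Main obstacle.} The delicate step is the uniformity of the remainders. The limit in \cref{lem:S_second_derivative} is stated at the fixed base point $u$ with varying direction $\phi=\tau\eta$, so it applies directly and uniformly, since $\|\tau\eta\|\le\|\eta\|$. The products with the difference $F'(S(u+\tau\eta))-F'(y)$, however, need a genuine $o(\|\eta\|^2)$ rate. For these I will use the Lipschitz continuity of $F''$ to get an $O(\|\eta\|_{L^q}^3)$ bound. The factor $\|\eta\|_{L^\infty}$ in \cref{lem:Sprime_lipschitz} is harmless because $\eta$ is uniformly bounded on $\feas-u$. I will verify that every norm mismatch is resolved by the embedding $L^{\tilde q}\hookrightarrow L^q$ together with this $L^\infty$ bound.
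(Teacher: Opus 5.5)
Your route matches the paper's for identifying the derivatives: the chain rule, then the adjoint identity obtained by testing \eqref{eq:adjoint} with $z_{u,\phi,\psi}$ and \eqref{eq:def_z_phipsi} with $p$. Your boundedness estimates are also correct, including the $W^{-1,2}$ fallback via $\|\psi\nabla z_{u,\phi}\|_{L^2}$. The paper only invokes the Banach-space chain rule and asserts that the second-order expansion is the characterization of twice differentiability. You instead prove the remainder by hand through $\int_0^1 (j'(u+\tau\eta)-j'(u))[\eta]\,d\tau$. That plan is sound, but one estimate in it fails as written.

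\textbf{The cross term.} The term $\langle F'(S(u+\tau\eta))-F'(y),\,S'(u+\tau\eta)[\eta]-z_{u,\eta}\rangle$ is not $o(\|\eta\|_{L^{\tilde q}}^2)$ with the tools you name.
\Cref{lem:Sprime_lipschitz} gives $\|S'(u+\tau\eta)[\eta]-z_{u,\eta}\|_{W^{1,2}}\le L\max\{1,\|\eta\|_{L^\infty}\}\,\tau\|\eta\|_{L^q}$. The Lipschitz bounds give $\|F'(S(u+\tau\eta))-F'(y)\|_{W^{-1,2}}\le C\tau\|\eta\|_{L^q}$. So the product is only $O(\tau^2\|\eta\|_{L^q}^2)$, with no factor tending to zero: $\max\{1,\|\eta\|_{L^\infty}\}\ge 1$, and $\|\eta\|_{L^\infty}$ need not vanish as $\|\eta\|_{L^{\tilde q}}\to 0$. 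Uniform boundedness on $\feas-u$ is not enough.
Lipschitz continuity of $F''$ does not rescue this. Even after removing the $F''$-Taylor remainder, the piece $\langle F''(y)(S(u+\tau\eta)-y),\,S'(u+\tau\eta)[\eta]-z_{u,\eta}\rangle$ remains, and it has the same order.

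\textbf{The fix.} Bound the second factor with \cref{lem:S_second_derivative} instead. It gives $S'(u+\tau\eta)[\eta]-z_{u,\eta}=\tau z_{u,\eta,\eta}+o(\tau\|\eta\|_{L^{\tilde q}}^2)$, together with $\|z_{u,\eta,\eta}\|_{W^{1,2}}\le C\|\eta\|_{L^{4s/(s-2)}}^2\le C\|\eta\|_{L^{\tilde q}}^2$. Hence the cross term is $O(\tau^2\|\eta\|_{L^{\tilde q}}^3)$.
What you actually need is continuity of $S'$ at $u$ in the operator norm of $\calL(L^{\tilde q}(\Omega),W^{1,2}(\Omega))$, with a rate. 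This is exactly what the abstract product rule in your second paragraph, and the paper's chain-rule argument, use implicitly. \Cref{lem:Sprime_lipschitz} is only the right tool for making $\tau\mapsto j'(u+\tau\eta)[\eta]$ continuous, which the integral formula requires.

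\textbf{A minor point.} Your opening claim that $L^{\tilde q}$ dominates $L^q$ does not follow from $\tilde q>\tilde q_*(q)$. For $q>(3+\sqrt{5})s/(s-2)$ one has $\tilde q_*(q)<q$, so $\tilde q<q$ is possible. This is harmless for your argument, since \cref{lem:S_Lipschitz,lem:S_derivative} may be applied with exponent $\tilde q$ directly. The paper makes the same implicit assumption in the proof of \cref{lem:S_second_derivative}.
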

\begin{proof}
By \cref{lem:S_second_derivative}, the control-to-state operator
$S:(\feas,\|\cdot\|_{L^{\tilde q}})\to W^{1,2}_{\tilde\Gamma}(\Omega)$
is twice Fréchet differentiable for feasible variations.

Due to the assumed twice continuous Fréchet differentiability of $F$ and the fact that
the remainder term estimate for the inner derivative holds for feasible variations,
the chain rule in Banach spaces may be applied to deduce that the reduced functional
$j$ is twice Fréchet differentiable for feasible variations.

Specifically, we obtain for $u \in \feas$ and $\phi \in \feas - u$ that
$ j'(u)[\phi] = \langle F'(y),S'(u)[\phi]\rangle = \langle F'(y),z_{u, \phi}\rangle
$
for the first derivative and, recalling $y = S(u)$,
\begin{align} \label{j''-proof}
    j''(u)[\phi,\psi]
= \langle F''(y)z_{u, \psi},z_{u, \phi}\rangle
+
\langle F'(y),S''(u)[\phi,\psi]\rangle = \langle F''(y)z_{u, \psi},z_{u, \phi}\rangle
+
\langle F'(y),z_{u,\phi,\psi}\rangle
\end{align}
for the second derivative, where $\langle \cdot,\cdot \rangle$ is the duality pairing
between $W^{1,2}_{\tilde\Gamma}(\Omega)$ and $W^{-1,2}(\Omega)$.

Recall that the adjoint state $p$ satisfies
\begin{align}\label{adjoint}
\int_\Omega u \nabla p \cdot \nabla v \, dx
=
- \langle F'(y), v \rangle
\qquad \text{for all } v \in W^{1,2}_{\tilde\Gamma}(\Omega)
\end{align}
Testing the above identity with $v = z_{u, \phi}$, and combining it with the variational formulation of $z_{u, \phi}$ from \eqref{eq:def_z_u_phi}, we obtain that
$
\langle F'(y), z_{u, \phi} \rangle
=
\int_\Omega \phi \, \nabla y \cdot \nabla p \, dx$
 which establishes \eqref{j'}.

To rewrite the second term in $j''(u)[\phi,\psi]$, we test \eqref{adjoint} with
$v=z_{u,\phi,\psi}$ and use the variational formulation \eqref{eq:def_z_phipsi} for $z_{u,\phi,\psi}$ :
\begin{align*}
\langle F'(y),z_{u,\phi,\psi}\rangle
&=
-\int_\Omega u\nabla p\cdot\nabla z_{u,\phi,\psi}\,dx
=
\int_\Omega \psi\,\nabla z_{u, \phi}\cdot\nabla p\,dx
+
\int_\Omega \phi\,\nabla z_{u, \psi}\cdot\nabla p\,dx.
\end{align*}
Substituting this into  \eqref{j''-proof} leads to \eqref{j''}.

The second-order expansion
\[ \lim_{\substack{\eta\to 0\\ u+\eta\in\feas}}
\frac{
\bigl|j(u+\eta)-j(u)-j'(u)[\eta]-\frac12\,j''(u)[\eta,\eta]\bigr|
}{
\|\eta\|_{L^{\tilde q}(\Omega)}^2
}
=0
\]
is precisely the characterization of twice Fr\'echet differentiability of $j$ on
$(\feas,\|\cdot\|_{L^{\tilde q}(\Omega)})$ for feasible variations.

Finally, boundedness of the bilinear map $j''(u)$ follows from the chain-rule representation,
the boundedness of $F''(y)$ as an operator
$
F''(y)\in \mathcal L\bigl(W^{1,2}_{\tilde\Gamma}(\Omega),W^{-1,2}(\Omega)\bigr),$ 
and the boundedness of $S'(u)$ and $S''(u)$ as multilinear maps with respect to the
$L^{\tilde q}(\Omega)$-norm.
\end{proof}

\section{First-order optimality conditions for
\texorpdfstring{\eqref{eq:p}}{(P)}}
\label{sec:first-order_opt}
For deriving first-order optimality conditions, we consider the following 
abstraction of \eqref{eq:p}:
\begin{gather}\label{eq:r}
\min_{u}\ j(u) + \TV(u)
\ \operatorname{s.t.}\ u \in \feas,
\tag{P$'$}
\end{gather}
where we recover \eqref{eq:p} with the choice $j \coloneqq F \circ S$.
\begin{lemma}
Let $j : L^2(\Omega) \to \R$ be bounded below and lower semicontinuous. Then \eqref{eq:r} admits a solution.
\end{lemma}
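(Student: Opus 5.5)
We use the direct method of the calculus of variations. First, the problem is nontrivial: any constant function $u \equiv c$ with $c\in[u_\ell,u_u]$ belongs to $\feas$ and has $\TV(u)=0$. Since $j$ is bounded below and $\TV\ge 0$, the infimum $m \coloneqq \inf_{u\in\feas} j(u)+\TV(u)$ is therefore finite. We take a minimizing sequence $(u_n)\subset\feas$ with $j(u_n)+\TV(u_n)\to m$. Because $j$ is bounded below, $\TV(u_n)$ is bounded. The box constraints give $\|u_n\|_{L^1}\le u_u|\Omega|$, so $(u_n)$ is bounded in $\BV(\Omega)$.

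Next we extract a convergent subsequence. $\Omega$ is a bounded Lipschitz domain, so the compact embedding $\BV(\Omega)\hookrightarrow L^1(\Omega)$ yields a subsequence (not relabeled) and a limit $\bar u\in L^1(\Omega)$ with $u_n\to\bar u$ in $L^1(\Omega)$. Passing to a further subsequence, we also have $u_n\to\bar u$ a.e.; hence $\bar u(x)\in[u_\ell,u_u]$ for a.e.\ $x$, since this interval is closed. The uniform bound $|u_n-\bar u|\le u_u-u_\ell$ then upgrades the convergence to every $L^p$ with $p<\infty$ via
\[
\|u_n-\bar u\|_{L^p}^p\le (u_u-u_\ell)^{p-1}\|u_n-\bar u\|_{L^1}.
\]
In particular $u_n\to\bar u$ in $L^2(\Omega)$.

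Then we pass to the limit. The total variation is lower semicontinuous with respect to $L^1$ convergence, so $\TV(\bar u)\le\liminf_n \TV(u_n)<\infty$. Thus $\bar u\in\BV(\Omega)$, and together with the bounds above, $\bar u\in\feas$. By assumption $j$ is lower semicontinuous on $L^2(\Omega)$, which gives $j(\bar u)\le\liminf_n j(u_n)$. Combining the two,
\[
j(\bar u)+\TV(\bar u)\le \liminf_n j(u_n)+\liminf_n\TV(u_n)\le \liminf_n\bigl(j(u_n)+\TV(u_n)\bigr)=m,
\]
so $\bar u$ is a solution of \eqref{eq:r}.

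The only step that needs care is converting the $L^1$ compactness coming from $\BV$ into convergence in the $L^2$ topology in which $j$ is assumed lower semicontinuous. The uniform $L^\infty$ bound from $\feas$ handles this, as shown above. For $j=F\circ S$, the required lower semicontinuity (in fact continuity) follows from \cref{lem:S_Lipschitz} combined with the same interpolation argument, provided $F$ is continuous on $W^{1,2}(\Omega)$.
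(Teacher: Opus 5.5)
Your proof is correct and is exactly the direct method that the paper invokes in a single line. You fill in the details properly: $\BV$ compactness, upgrading $L^1$- to $L^2$-convergence via the box constraints, and lower semicontinuity of $\TV$ and $j$. One minor point about your closing remark: continuity of $F$ is more than needed there, since the paper's standing assumption that $F$ is weakly lower semicontinuous already suffices once $S(u_n)\to S(\bar u)$ strongly in $W^{1,2}(\Omega)$.
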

\begin{proof}
That follows using the direct method of calculus of variations.
\end{proof}
For \eqref{eq:p} the lower semi-continuity of $j$ follows from the continuity of the solution operator of the state equation asserted
in \cref{lem:S_Lipschitz}. Again, we consider some $s > 2$, where we have in mind the situation of \cref{rem:index_s} and 
\cref{prp-new:elementary_properties}, and $q > \frac{2s}{s - 2}$. Then $2 < q < \infty$ and the Hölder conjugate $q'$
satisfies $1 < q' < 2$. Moreover, the convex subdifferential of the indicator function on $\BV(\Omega) \cap L^{q}(\Omega)$
reads:
\[ \partial \delta_{\feas}(\bar{u}) = \{u^* \in (\BV(\Omega) \cap L^{q}(\Omega))^* \,:\, \langle u^*, u - \bar{u} \rangle \le 0
\text{ for all } u \in \feas\}.
\]
In the following, we consider locally optimal solutions to \eqref{eq:r}, where locally means in a neighborhood with respect
to the norm-topology. Since the choice of the topology in $\BV(\Omega)$ (weak$^*$, strict, or norm) usually matters a lot,
we briefly recall that if a function is locally optimal in a weak$^*$-neighborhood or strict neighborhood of some point in
$\BV(\Omega)$, it is also locally optimal in a neighborhood with respect to the norm topology of $\BV(\Omega)$ so that
it makes sense to consider local optimality in this weakest sense.
\begin{lemma}
$\bar{u} \in \feas$ is locally optimal for \eqref{eq:r}
with respect to $\BV(\Omega) \cap L^{1}(\Omega)$ if and only if it is locally
optimal with respect to $\BV(\Omega) \cap L^{p}(\Omega)$ for $1 < p < \infty$.
\end{lemma}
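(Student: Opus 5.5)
The plan is to use that every point of $\feas$ takes values in $[u_\ell,u_u]$. Hence for $u,\bar u\in\feas$ the difference $u-\bar u$ is bounded in $L^\infty$ by $M\coloneqq u_u-u_\ell$. The $L^1$- and $L^p$-norms of such differences are therefore mutually controlled:
\[
\|u-\bar u\|_{L^1}\le |\Omega|^{1-1/p}\|u-\bar u\|_{L^p},
\qquad
\|u-\bar u\|_{L^p}\le M^{1-1/p}\|u-\bar u\|_{L^1}^{1/p}.
\]
The first bound is Hölder's inequality on the bounded domain. The second follows from $|u-\bar u|^p\le M^{p-1}|u-\bar u|$ a.e. Note that $\TV(u)=+\infty$ outside $\BV(\Omega)$ and that the feasible set is contained in $\feas$. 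So the objective of \eqref{eq:r} on a neighborhood only needs to be compared against competitors in $\feas$, and only the trace of the neighborhood on $\feas$ matters.

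I interpret \emph{locally optimal with respect to $\BV(\Omega)\cap L^p(\Omega)$} as follows: there is $\varepsilon>0$ such that $j(\bar u)+\TV(\bar u)\le j(u)+\TV(u)$ for all $u\in\feas$ with $\|u-\bar u\|_{L^p}+\TV(u-\bar u)<\varepsilon$. This uses the natural norm $\|\cdot\|_{L^p}+|\cdot|_{\TV}$ on the intersection, and for $p=1$ it is the $\BV$ norm.

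\textbf{($L^1\Rightarrow L^p$).} Suppose optimality holds on the $L^1$-type ball of radius $\varepsilon$. I will pick $\delta\le\varepsilon/2$ with $|\Omega|^{1-1/p}\delta\le\varepsilon/2$. Any feasible $u$ with $\|u-\bar u\|_{L^p}+\TV(u-\bar u)<\delta$ then satisfies $\|u-\bar u\|_{L^1}+\TV(u-\bar u)<\varepsilon$. Optimality transfers directly.

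\textbf{($L^p\Rightarrow L^1$).} Suppose optimality holds on the $L^p$-type ball of radius $\varepsilon$. I will choose $\delta\in(0,\varepsilon/2]$ with $M^{1-1/p}\delta^{1/p}\le\varepsilon/2$. Then for feasible $u$ with $\|u-\bar u\|_{L^1}+\TV(u-\bar u)<\delta$ we get $\TV(u-\bar u)<\varepsilon/2$ and $\|u-\bar u\|_{L^p}<\varepsilon/2$. So $u$ lies in the $L^p$-ball and optimality transfers.

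There is no genuine obstacle here. The only point needing care is the second direction, where the $L^p$-norm is not controlled by the $L^1$-norm in general. This is exactly where the uniform box constraint enters, through the interpolation bound $|u-\bar u|^p\le M^{p-1}|u-\bar u|$. I will also remark that the argument never uses properties of $j$, so the equivalence holds for any $j$. Moreover, since the $\TV$ part of the norm is identical in both topologies, it plays no role in the comparison.
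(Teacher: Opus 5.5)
Your proposal is correct and follows essentially the same route as the paper. One direction uses Hölder's inequality (the embedding $L^p(\Omega)\hookrightarrow L^1(\Omega)$), and the other uses the box-constraint bound $\|u-\bar u\|_{L^p}^p\le (u_u-u_\ell)^{p-1}\|u-\bar u\|_{L^1}$. The only difference is cosmetic: you measure neighborhoods with the sum $\|u-\bar u\|_{L^p}+\TV(u-\bar u)$, while the paper uses $\max\{\TV(u-\bar u),\|u-\bar u\|_{L^p}\}$, and these two quantities are equivalent.
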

\begin{proof}
Let $\bar{u} \in \BV(\Omega) \cap \feas$ be locally optimal with respect to
$\BV(\Omega) \cap L^1(\Omega)$, that is, there exists $r > 0$ such that
$j(\bar{u}) + \TV(\bar{u}) \le j(u) + \TV(u)$ 
holds for all $u \in \feas$ with $\max\{\TV(u - \bar{u}), \|u - \bar{u}\|_{L^1}\} \le r$. 
The continuous embedding $L^p(\Omega) \hookrightarrow L^1(\Omega)$ gives some $c > 0$
such that
\[ \max\{\TV(u - \bar{u}),\|u - \bar{u}\|_{L^1}\}
   \le \max\{c,1\} \max\{\TV(u - \bar{u}),\|u - \bar{u}\|_{L^p}\}.
\]
Consequently, $u \in \feas$ with $\max\{\TV(u - \bar{u}),\|u - \bar{u}\|_{L^p}\} \le \frac{r}{\max\{1,c\}}$
implies $j(\bar{u}) + \TV(\bar{u}) \le j(u) + \TV(u)$.
For the reverse direction, we observe that the box constraints imply
\[ \|u - \bar{u}\|_{L^p}^p \le \|u - \bar{u}\|_{L^1}\|(u - \bar{u})^{p - 1}\|_{L^\infty}
   \le \|u - \bar{u}\|_{L^1}(u_u - u_\ell)^{p - 1}.
\]
Consequently, for all $u \in \feas$ with $\max\{\TV(u - \bar{u}),\|u - \bar{u}\|_{L^1}\} \le r$
we have
\begin{align*}
\hspace{1em}&\hspace{-1em} \max\{\TV(u - \bar{u}),\|u - \bar{u}\|_{L^p}\}\\
&\le 
\max\{1,u_u - u_\ell\}^{\frac{p - 1}{p}}\max\{\TV(u - \bar{u}),\|u - \bar{u}\|_{L^1}^\frac{1}{p}\} \\
&\le \max\{1,u_u - u_\ell\}^{\frac{p - 1}{p}}\max\{\|u - \bar{u}\|_{L^1}^\frac{1}{p},\TV(u - \bar{u})^\frac{1}{p},
                                         \|u - \bar{u}\|_{L^1},\TV(u - \bar{u})\}\\
&\le \max\{1,u_u - u_\ell\}^{\frac{p - 1}{p}}\max\{r^{\frac{1}{p}},r\}.
\end{align*}
\end{proof}
We prove our optimality conditions under regularity conditions as are ensured
for our model problem \eqref{eq:p} by \cref{thm:old_regularity_of_first_derivative},
where we recall that $q > 2s/(s - 2)$ implies $q' < 2s/(2 + s)$.
\begin{assumption}\label{ass:j}
Let $j : (\feas, \|\cdot\|_{L^q}) \to \R$ be differentiable on $\feas$
with respect to feasible variations and its derivative be Lipschitz continuous
as a mapping $\nabla_R j : (\feas, \|\cdot\|_{L^q}) \to L^{q'}(\Omega)$.
\end{assumption}
\begin{lemma}
Let \cref{ass:j} hold.
Let $\bar{u} \in \feas$ be locally optimal for \eqref{eq:r}
with respect to $\BV(\Omega) \cap L^{q}(\Omega)$.
Then there are $\sigma \in \calM(\Omega,\R^d)^*$
and $u^* \in \partial \delta_{\feas}(\bar{u})$ such that
\[ \int_{\Omega}-\nabla_R j(\bar{u}) v \dd x
= \langle \sigma, \nabla v\rangle_{\calM^*,\calM} 
+ \langle u^*, v \rangle_{(\BV(\Omega) \cap L^{q}(\Omega))^*,\BV(\Omega) \cap L^{q}(\Omega)}
\]
holds for all $v \in \BV(\Omega) \cap L^{q}(\Omega)$.
\end{lemma}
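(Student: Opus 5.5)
We plan to derive the multiplier representation from a convex-analytic first-order condition combined with a sum rule for subdifferentials. Set $X \coloneqq \BV(\Omega) \cap L^q(\Omega)$ with norm $\|v\|_X \coloneqq \|v\|_{L^q} + \TV(v)$. The first step is a variational inequality obtained from local optimality and \cref{ass:j}. For $u \in \feas$ and $t \in (0,1]$ small, the point $\bar u + t(u - \bar u)$ lies in $\feas$ and in the local neighborhood. Convexity of $\TV$ and differentiability of $j$ along feasible variations then give
\[ 0 \le j(\bar u + t(u-\bar u)) - j(\bar u) + t\bigl(\TV(u) - \TV(\bar u)\bigr). \]
Dividing by $t$ and letting $t \downarrow 0$ yields
\[ \int_\Omega \nabla_R j(\bar u)(u - \bar u)\dd x + \TV(u) - \TV(\bar u) \ge 0 \quad\text{for all } u \in \feas. \]
Equivalently, $\bar u$ minimizes the convex function
\[ G(u) \coloneqq \ell(u) + \TV(u) + \delta_{\feas}(u) \quad\text{on } X, \qquad \ell(u) \coloneqq \int_\Omega \nabla_R j(\bar u) u \dd x. \]
Here $\ell \in X^*$, because $\nabla_R j(\bar u) \in L^{q'}(\Omega)$.

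The second step applies the subdifferential sum rule: $0 \in \partial G(\bar u) = \ell + \partial(\TV|_X)(\bar u) + \partial\delta_{\feas}(\bar u)$. This produces $\xi \in \partial(\TV|_X)(\bar u)$ and $u^* \in \partial\delta_{\feas}(\bar u)$ with $-\ell = \xi + u^*$. The Moreau--Rockafellar theorem requires one of the two functions to be continuous at a point of the other's domain. $\TV$ is finite and convex on all of $X$ and Lipschitz with respect to $\|\cdot\|_X$, since $|\TV(u) - \TV(w)| \le \TV(u - w) \le \|u-w\|_X$. Hence it is continuous everywhere, and the qualification holds at $\bar u$ itself.

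The third step, which we expect to be the main obstacle, represents $\xi$ through a functional acting on $\nabla v$. We write $\TV|_X = h \circ D$, where $D : X \to \calM(\Omega,\R^d)$, $v \mapsto \nabla v$ (the distributional gradient measure), is bounded linear, and $h(\mu) \coloneqq \|\mu\|_{\calM}$ is a continuous convex function on $\calM$. The chain rule for a continuous convex function composed with a bounded linear map holds without closed-range assumptions, because $h$ is continuous everywhere; in the Moreau--Rockafellar form it reads $\partial(h\circ D)(\bar u) = D^*\partial h(D\bar u)$. We would therefore get $\sigma \in \partial h(\nabla\bar u) \subset \calM(\Omega,\R^d)^*$, i.e.\ $\|\sigma\|_{\calM^*} \le 1$ and $\langle\sigma,\nabla\bar u\rangle = \TV(\bar u)$, with $\langle \xi, v\rangle = \langle \sigma, \nabla v\rangle_{\calM^*,\calM}$ for all $v \in X$. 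If invoking this chain rule is in doubt, a direct fallback is available. Define the functional $\nabla v \mapsto \langle\xi, v\rangle - \langle\xi,\bar u\rangle + \TV(\bar u)$ on the range of $D$. The subgradient inequality $\langle \xi, v - \bar u\rangle \le \TV(v) - \TV(\bar u)$ shows it is dominated by $\|\cdot\|_{\calM}$ there; note that $\ker D$ consists of constants, on which $\xi$ vanishes by the same inequality applied to $\bar u \pm c$. Hahn--Banach then extends it to a norm-dominated element $\sigma$ of $\calM^*$.

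Combining the three steps gives, for all $v \in X$,
\[ -\int_\Omega \nabla_R j(\bar u) v \dd x = \langle\sigma,\nabla v\rangle + \langle u^*, v\rangle, \]
which is the claim.
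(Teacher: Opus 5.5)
Your proposal is correct and follows the paper's route. The paper likewise passes from local optimality to $-j'(\bar u) \in \partial(\TV + \delta_{\feas})(\bar u)$, then applies the chain rule $\partial \TV(\bar u) = \nabla^*(\partial\|\cdot\|_{\calM})(\nabla \bar u)$ and the convex sum rule. You supply more detail: you derive the variational inequality explicitly from the convexity of $\TV$, and you justify the qualification condition through the Lipschitz continuity of $\TV$ on $\BV(\Omega)\cap L^q(\Omega)$, whereas the paper only points to the constant $\tfrac12(u_\ell+u_u)$.
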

\begin{proof}
The local optimality implies the following first-order optimality condition:
\[ -j'(\bar{u}) \in \partial (\TV + \delta_{\feas})(\bar{u}) 
   \subset (\BV(\Omega) \cap L^{q}(\Omega))^*
\]
We observe by the chain rule for the convex subdifferential (note that $\mu \mapsto \|\mu\|_{\calM}$
is continuous on $\calM$) that
$\sigma \in \partial \TV(\bar{u})$ if and only if
$\sigma \in \nabla^* (\partial \|\cdot\|_{\calM})(\nabla \bar{u})$.
The sum rule for subdifferentials gives 
$\partial (\TV + \delta_{\feas})(\bar{u}) 
= \nabla^* (\partial \|\cdot\|_{\calM})(\nabla \bar{u}) + \partial \delta_{\feas}(\bar{u})$
(note that $\tfrac{1}{2}(u_\ell + u_u)$ is in the interior of the effective domain
of $\TV$). In combination with the characterization of the convex subdifferential of the indicator
function, we obtain the claim.
\end{proof}
\begin{remark}
We deliberately consider local optimality with respect to the space $\BV(\Omega) \cap L^{q}(\Omega)$, on which $\TV$ is continuous, to apply the sum rule
here. While this setting enables our analysis and is also used in \cite{natemeyer2022penalty}, obtaining a first-order optimality condition for local
minimizers with respect to $L^q(\Omega)$, on which $\TV$ is only lower semi-continuous, would constitute a stronger result. However, this result does not appear to be available with our current approach; 
see also the corresponding discussion in \S1 in \cite{clason2018total}.
\end{remark}
We regularize the problem by means of a positive standard mollifier that induces
the sequence of operators $K_\eta u \coloneqq (k_\eta * u_0)|_\Omega$ for
$\eta \in (0,\infty)$, where $w_0$ is the extension of $w \in L^1(\Omega)$
to $L^1_{\textrm{loc}}(\R)$ by zero outside of $\Omega$ and
$\tilde{w}|_\Omega \in L^1(\Omega)$
the restriction of $\tilde{w} \in L^1_{\textrm{loc}}(\R)$ to $\Omega$.
We define $j_\eta \coloneqq j \circ K_\eta$ and obtain that
$j_\eta : L^1(\Omega) \to \R$ is continuously Fr\'{e}chet differentiable with
$\nabla_R j_\eta(u) = K_\eta^* \nabla_R j(K_\eta u)$, where
we have identified $(L^p(\Omega))^* \cong L^{p'}(\Omega)$ for appropriate choices
of $p \in [1,\infty)$. The mollification regularization of \eqref{eq:r} reads
\begin{gather}\label{eq:r_eta}
\min_{u}\ j_\eta(u) + \TV(u)
\ \operatorname{s.t.}\ u \in \feas.
\tag{P$_\eta'$}
\end{gather}
We recover the corresponding mollification regularization of our model problem 
\eqref{eq:p} as
\begin{gather}\label{eq:p_eta}
\min_{y,u}\ F(y) + \TV(u)
\ \operatorname{s.t.}\
\left\{
\begin{aligned}
- \dvg\big((K_\eta u) \nabla y\big) &= f \text{ in } \Omega, \\
y &= f_D \text{ on } \tilde{\Gamma}, \\
\nabla y \cdot \nu  &= 0  \text{ on } \Gamma, \\
0 < u_\ell \le u(x) &\le u_u \text{ for a.e.\ }x \in \Omega
\end{aligned}
\right.
\tag{P$_\eta$}
\end{gather}
A straightforward $\Gamma$-convergence-type argument (see \cref{lem:ubar_eta_strictly} below for the sketch of
a very similar argument) shows that any sequence of global minimizers
of \eqref{eq:r_eta} as $\eta \searrow 0$ has a convergent subsequence (weakly-$^*$ and strictly
in $\BV(\Omega)$), where the limit is a global minimizer of \eqref{eq:r}.
Due to \cite{natemeyer2022penalty}, it is known that the first-order optimality
condition for \eqref{eq:r_eta} reads
\begin{theorem}
Let \cref{ass:j} hold.
Let $\eta > 0$ and $u_\eta \in \feas$ be locally optimal for \eqref{eq:r_eta} with respect to
$\BV(\Omega) \cap L^{2}(\Omega)$. Then there exist $\sigma_\eta \in L^\infty(\Omega)$ with
$\dvg \sigma_\eta \in L^2(\Omega)$, $\lambda^\ell_\eta \in L^2(\Omega)$, and $\lambda^u_\eta \in L^2(\Omega)$
such that the following optimality system is satisfied:
\begin{align*}
-\dvg \sigma_\eta - \lambda^\ell_\eta + \lambda^u_\eta &= -\nabla_R j_\eta(u_\eta) \\
-\dvg \sigma_\eta &\in \partial \TV (u_\eta) \\
\lambda^\ell_\eta &\ge 0 \\
\lambda^u_\eta &\ge 0 \\
(\lambda^\ell_\eta, u_\ell - u_\eta)_{L^2} &= 0\\
(\lambda^u_\eta, u_\eta - u_u)_{L^2} &= 0.
\end{align*}
\end{theorem}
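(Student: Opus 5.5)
The plan is to derive this optimality system by combining a penalty and dualization approach in the spirit of \cite{natemeyer2022penalty} with the smoothness of $j_\eta$. Since $K_\eta$ maps $L^1(\Omega)$ continuously into $C(\bar\Omega)\cap W^{1,\infty}$-type spaces, and \cref{ass:j} gives $\nabla_R j$ Lipschitz into $L^{q'}(\Omega)$, the composite gradient $g_\eta \coloneqq \nabla_R j_\eta(u_\eta) = K_\eta^*\nabla_R j(K_\eta u_\eta)$ lies in $L^\infty(\Omega)$, and in particular in $L^2(\Omega)$. Local optimality of $u_\eta$ with respect to $\BV(\Omega)\cap L^2(\Omega)$, together with the Fr\'echet differentiability of $j_\eta$ on $L^1(\Omega)$, yields, as in the preceding lemma, that $u_\eta$ is a minimizer of the convex problem
\[
\min_{v\in\feas}\ (g_\eta, v)_{L^2} + \TV(v).
\]
To see this, compare $u_\eta$ with $u_\eta + t(v-u_\eta)$ for $t\searrow 0$ and use the convexity of $\TV$ and of $\feas$.

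Next I would separate the box constraint from $\TV$ in this linearized convex problem at the $L^2$ level. I would introduce the convex functional $G(v)\coloneqq \TV(v)+\delta_{\feas}(v)$ on $L^2(\Omega)$, which is lower semicontinuous. Then $-g_\eta\in\partial G(u_\eta)\subset L^2(\Omega)$. For the sum rule in $L^2$ I would avoid relying on continuity of $\TV$ and instead use a Moreau--Yosida or penalty argument. Concretely, replace $\delta_{\feas}$ by $\frac{1}{2\gamma}\|(v-u_u)^+\|^2_{L^2}+\frac{1}{2\gamma}\|(u_\ell-v)^+\|^2_{L^2}$ and add $\frac12\|v-u_\eta\|_{L^2}^2$ to enforce convergence to $u_\eta$. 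The penalized minimizers $v_\gamma$ then satisfy
\[
-g_\eta - (v_\gamma-u_\eta) - \lambda^u_\gamma + \lambda^\ell_\gamma \in \partial\TV(v_\gamma),
\qquad \lambda^u_\gamma = \tfrac1\gamma (v_\gamma-u_u)^+,\quad \lambda^\ell_\gamma = \tfrac1\gamma(u_\ell-v_\gamma)^+,
\]
because the penalty is continuous on $L^2$ and the sum rule applies. Moreover, $\partial\TV(v)$ in $L^2$ is characterized by $\{-\dvg\sigma : \sigma\in L^\infty,\ \|\sigma\|_\infty\le1,\ \dvg\sigma\in L^2,\ \int v\,(-\dvg \sigma) = \TV(v)\}$ (the standard characterization via the pairing of \cite{natemeyer2022penalty}), which produces $\sigma_\gamma$.

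The main obstacle is obtaining $L^2$ bounds on the multipliers $\lambda^\ell_\gamma,\lambda^u_\gamma$ that are uniform in $\gamma$. For these I would use truncation and cutoff arguments. Test the inclusion with $(v_\gamma-u_u)^+$, using that $\TV$ is monotone under truncation, i.e. $\TV(\min(v,u_u))\le\TV(v)$, so that the $\TV$ contribution has the right sign. This gives $\gamma\|\lambda^u_\gamma\|^2_{L^2}\cdot\frac{1}{\gamma}\le \|g_\eta\|_{L^2}\|\lambda^u_\gamma\|_{L^2}\cdot$const, hence $\|\lambda^u_\gamma\|_{L^2}\le \|g_\eta\|_{L^2}+\|v_\gamma-u_\eta\|_{L^2}$, and similarly for $\lambda^\ell_\gamma$. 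Since the multipliers are supported on disjoint sets, the $L^2$ bound on $\dvg\sigma_\gamma$ follows from the inclusion itself, and $\|\sigma_\gamma\|_\infty\le 1$.

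Finally, I would pass to the limit $\gamma\to0$. We have $v_\gamma\to u_\eta$ strictly and in $L^2$ because $u_\eta$ is the unique minimizer of the proximal problem. Extract weak limits $\lambda^{\ell,u}_\gamma\rightharpoonup\lambda^{\ell,u}_\eta$ in $L^2$ and $\sigma_\gamma\weakstarto\sigma_\eta$ in $L^\infty$ with $\dvg\sigma_\gamma\rightharpoonup\dvg\sigma_\eta$ in $L^2$. Nonnegativity is preserved under these limits. Complementarity follows from $(\lambda^u_\gamma, v_\gamma-u_u)\ge0$, from $\lambda^u_\gamma\ge0$, and from feasibility of the limit, via weak--strong convergence of the pairing. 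The inclusion $-\dvg\sigma_\eta\in\partial\TV(u_\eta)$ follows from the weak--strong closedness of the graph of the maximal monotone operator $\partial\TV$ in $L^2\times L^2$.
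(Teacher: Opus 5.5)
Your argument is correct, but it does not follow the paper's route. The paper's proof is only a citation of Theorems 3.12 and 4.3 in \cite{natemeyer2022penalty}. That penalty framework, as reflected in \cref{Lq-mult}, works with a smoothed total variation $\psi_\varepsilon$, smoothed max-type penalties with parameter $\rho$, and $H^1$-regular approximants. It bounds the multipliers by testing the smooth variational identity with the multipliers, using $\nabla\lambda^\ell_\eta=-\alpha_\ell(u_\eta)\rho\nabla u_\eta$ and $\psi_\varepsilon'(\xi)\cdot\xi\ge 0$.

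You instead keep the exact $\TV$ and penalize only the box constraints in $L^2(\Omega)$. Three ingredients replace the paper's machinery:
\begin{itemize}
\item The reduction to the linearized convex problem, plus the proximal term $\frac12\|v-u_\eta\|_{L^2}^2$ (analogous to the paper's device \eqref{eq:cvx_ubar}), pins the limit of the penalized minimizers to $u_\eta$.
\item The sign condition on $\psi_\varepsilon'$ is replaced by the fact that truncation does not increase $\TV$, inserted into the subdifferential inequality.
\item The limit inclusion follows from strong--weak closedness of the graph of the maximal monotone operator $\partial\TV$ on $L^2(\Omega)$.
\end{itemize}
Your route gives a self-contained convex-analytic proof with no smoothing of $\TV$ and no $H^1$-regularity. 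It rests only on the standard $L^2$-characterization of $\partial\TV$, which the paper also uses via Proposition 7 in \cite{bredies2016pointwise}. The citation gives brevity and stays inside the machinery the paper reuses for its $L^{q'}$-estimates.

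Two details need tidying. First, the clean multiplier estimate is $\|\lambda^u_\gamma\|_{L^2}^2\le-(g_\eta+v_\gamma-u_\eta,\lambda^u_\gamma)_{L^2}$. Disjointness of supports is what removes the cross term $(\lambda^\ell_\gamma,\lambda^u_\gamma)_{L^2}$ here; the bound on $\dvg\sigma_\gamma$ needs no disjointness. Second, you need strong $L^2$-convergence $v_\gamma\to u_\eta$ for complementarity and for graph closedness, and $\BV$-compactness alone does not give it. It comes from comparing with $u_\eta$ in the penalized objective. After identifying the weak limit by uniqueness, this yields $\limsup_\gamma[\TV(v_\gamma)+\frac12\|v_\gamma-u_\eta\|_{L^2}^2]\le\TV(u_\eta)\le\liminf_\gamma\TV(v_\gamma)$, which gives both strict and strong $L^2$-convergence.
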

\begin{proof}
This follows from Theorems 3.12 and 4.3 in \cite{natemeyer2022penalty}.
\end{proof}
For a local minimizer $\bar{u}$ of \eqref{eq:r}, we consider the convex
proximal regularization of the partially linearized problem
\begin{gather}\label{eq:cvx_ubar}
\min_{u \in \feas}\ \int_\Omega \nabla_R j(\bar{u}) (u - \bar{u}) + \frac{1}{q}\|u - \bar{u}\|_{L^{q}}^{q}
+ \TV(u)
\tag{C$_{\bar{u}}$}
\end{gather}
and obtain that $\bar{u}$ is its unique global minimizer:
\begin{lemma}\label{lem:local_minimizers_minimize_cvx_ubar}
Let \cref{ass:j} hold.
If $\bar{u}$ is a local minimizer of \eqref{eq:r} with respect to $\BV(\Omega) \cap L^q(\Omega)$, then $\bar{u}$ is a
global minimizer of \eqref{eq:cvx_ubar} with respect to $\BV(\Omega) \cap L^q(\Omega)$.
\end{lemma}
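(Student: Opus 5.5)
Since \eqref{eq:cvx_ubar} is convex, it suffices to show that $\bar u$ satisfies its first-order optimality condition, and we will derive that condition from the local optimality of $\bar u$ for \eqref{eq:r}. Write $\Phi(u) \coloneqq \int_\Omega \nabla_R j(\bar u)(u-\bar u)\dd x + \frac1q\|u-\bar u\|_{L^q}^q + \TV(u)$, and fix an arbitrary $u \in \feas$. Since $\Phi(u) = +\infty$ whenever $u \notin \BV(\Omega)$, we may assume $u \in \BV(\Omega)\cap L^q(\Omega)$. For $t\in(0,1]$ set $u_t \coloneqq \bar u + t(u-\bar u)$. By convexity of the box, $u_t \in \feas$. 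Moreover, $\max\{\TV(u_t-\bar u),\|u_t-\bar u\|_{L^q}\} = t\max\{\TV(u-\bar u),\|u-\bar u\|_{L^q}\} \to 0$, so for all sufficiently small $t$ the point $u_t$ lies in the neighbourhood on which $\bar u$ is locally optimal. Hence
\[ j(\bar u) + \TV(\bar u) \le j(u_t) + \TV(u_t) \le j(u_t) + (1-t)\TV(\bar u) + t\TV(u), \]
where the second inequality is convexity of $\TV$.

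Next we use the differentiability from \cref{ass:j}. Because $\phi = t(u-\bar u)$ is a feasible variation, we can write $j(u_t) = j(\bar u) + t\int_\Omega \nabla_R j(\bar u)(u-\bar u)\dd x + o(t)$. Substituting this into the inequality above, dividing by $t$, and letting $t\searrow 0$ yields
\[ 0 \le \int_\Omega \nabla_R j(\bar u)(u-\bar u)\dd x + \TV(u) - \TV(\bar u). \]
Adding the nonnegative term $\frac1q\|u-\bar u\|_{L^q}^q$ to the right-hand side gives $\Phi(\bar u) = \TV(\bar u) \le \Phi(u)$. Since $u \in \feas$ was arbitrary, $\bar u$ is a global minimizer of \eqref{eq:cvx_ubar}. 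Note that all terms are finite: $\nabla_R j(\bar u)\in L^{q'}$ and $u-\bar u\in L^q$, and in any case $u - \bar u \in L^\infty$ by the box constraints.

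The only point that needs care is the Fréchet remainder step. The $o(t)$ term is $o(\|t(u-\bar u)\|_{L^q})$, and since $u-\bar u$ is fixed this is $o(t)$. The convex-combination argument also uses that $\TV$ is finite at both $u$ and $\bar u$, which holds because both lie in $\BV(\Omega)$.

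Uniqueness, mentioned in the text before the lemma, is not part of the statement. If it were needed, it would follow from strict convexity of $\frac1q\|\cdot-\bar u\|_{L^q}^q$ for $q>1$, together with convexity of the remaining terms.
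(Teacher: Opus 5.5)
Your proof is correct, but it follows a different route from the paper's.

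The paper first tries to show that $\bar u$ is a \emph{local} minimizer of \eqref{eq:cvx_ubar}. On the $r$-neighbourhood it expands $j(u)=j(\bar u)+\int_\Omega \nabla_R j(\bar u)(u-\bar u)\dd x+o(\|u-\bar u\|_{L^q})$ and bounds the proximal term $\frac1q\|u-\bar u\|_{L^q}^q$ from below by this remainder. It then applies the local optimality of $\bar u$ for \eqref{eq:r}, and uses strict convexity of \eqref{eq:cvx_ubar} to pass from local to global (and unique) optimality.

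You instead work along segments $u_t=\bar u+t(u-\bar u)$ and use convexity of $\feas$ and of $\TV$. Dividing by $t$ and letting $t\searrow 0$ gives the variational inequality $\int_\Omega \nabla_R j(\bar u)(u-\bar u)\dd x+\TV(u)-\TV(\bar u)\ge 0$ for every $u\in\feas$. This is already a global statement, and the proximal term enters only as a nonnegative addition.

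The advantage of your route is that it never needs the proximal term to dominate the Taylor remainder. That comparison is not automatic here: under \cref{ass:j} the remainder is only known to be $O(\|u-\bar u\|_{L^q}^2)$, and $q>2$. So $\frac1q\|u-\bar u\|_{L^q}^q$ is of higher order than the available bound on the remainder and cannot be expected to dominate it near $\bar u$. In your argument the remainder is $o(t)$ and disappears after dividing by $t$.

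As a by-product, you obtain the slightly stronger fact that $\bar u$ minimizes the linearized functional even without the proximal term. Uniqueness, which the paper gets from strict convexity in the same step, can be appended exactly as you indicate.
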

\begin{proof}
The local optimality of $\bar{u}$ and the differentiability of
$j : (\feas,\|\cdot\|_{L^q}) \to \R$ give that there exists
$r > 0$ such that for all $u \in \feas$ with $\max\{\TV(u - \bar{u}),\|u - \bar{u}\|_{L^{q}}\} \le r$ it holds that
\begin{align*}
\hspace{1em}&\hspace{-1em}
j(\bar{u}) + \int_\Omega \nabla_R j(\bar{u}) (u - \bar{u}) 
+ \frac{1}{q}\|u - \bar{u}\|_{L^{q}}^{q} + \TV(\bar{u})\\
&\ge j(\bar{u}) + \int_\Omega \nabla_R j(\bar{u}) (u - \bar{u}) 
+ o(\|u - \bar{u}\|_{L^{q}}) + \TV(\bar{u})\\
&\ge j(\bar{u}) 
= j(\bar{u}) + \int_\Omega \nabla_R j(\bar{u}) (\bar{u} - \bar{u}) 
+ \frac{1}{q}\|\bar{u} - \bar{u}\|_{L^{q}}^{q} + \TV(\bar{u}).
\end{align*}
Subtracting $j(\bar{u})$ on both sides yields local optimality for \eqref{eq:cvx_ubar}. 
Since the problem is strictly convex, $\bar{u}$ is its unique global minimizer.
\end{proof}
Next, we consider the regularized (convex) problems, where $\nabla_R  j(\bar{u})$
is replaced by $K_\eta^*\nabla j(K_\eta \bar{u})$
\begin{gather}\label{eq:cvx_ubar_eta}
\min_{u \in \feas}\ \int_\Omega \nabla_R j_\eta(\bar{u}) (u - \bar{u}) 
+ \frac{1}{q}\|K_\eta(u - \bar{u})\|_{L^{q}}^{q}
+ \TV(u)
\tag{C$_{\bar{u},\eta}$}
\end{gather}
and obtain the following approximation result.
\begin{lemma}\label{lem:ubar_eta_strictly}
Let \cref{ass:j} hold.
Let $\bar{u}$ solve \eqref{eq:cvx_ubar}. Let $\bar{u}_\eta$ solve \eqref{eq:cvx_ubar_eta} as $\eta \searrow 0$.
Then $\bar{u}_\eta \to \bar{u}$ in $L^{q}(\Omega)$, as well as weakly$^*$ and strictly in $\BV(\Omega)$.
\end{lemma}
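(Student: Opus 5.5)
The plan is a $\Gamma$-convergence-type argument combined with strict convexity of \eqref{eq:cvx_ubar}. Write $J(u)$ for the objective of \eqref{eq:cvx_ubar} and $J_\eta(u)$ for that of \eqref{eq:cvx_ubar_eta}.

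\textbf{Step 1: comparison with $\bar u$.} Since $\bar u \in \feas$ is feasible for \eqref{eq:cvx_ubar_eta} and $J_\eta(\bar u) = \TV(\bar u)$, minimality gives
\[
J_\eta(\bar u_\eta) \le \TV(\bar u).
\]
The box constraints bound $\|\bar u_\eta\|_{L^\infty}$ uniformly. By \cref{ass:j}, $\nabla_R j(\bar u) \in L^{q'}(\Omega)$, and $K_\eta^*$ is a contraction on $L^{q'}$. Hence the linear term in $J_\eta(\bar u_\eta)$ is bounded independently of $\eta$. Since the $L^q$-power term is nonnegative, we conclude $\sup_\eta \TV(\bar u_\eta) < \infty$.

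\textbf{Step 2: compactness.} By compactness of $\BV(\Omega) \hookrightarrow L^1(\Omega)$ and the uniform $L^\infty$-bound, every subsequence has a further subsequence with $\bar u_\eta \to \tilde u$ in $L^1(\Omega)$ and weakly$^*$ in $\BV(\Omega)$. Interpolating against the $L^\infty$-bound upgrades this to convergence in $L^q(\Omega)$. The limit satisfies $\tilde u \in \feas$, since $\feas$ is closed.

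\textbf{Step 3: liminf inequality.} I need the linear term to converge: $\int K_\eta^*\nabla_R j(K_\eta\bar u)\,(\bar u_\eta - \bar u) \to \int \nabla_R j(\bar u)(\tilde u - \bar u)$. This follows from three facts.
\begin{itemize}
\item $K_\eta \bar u \to \bar u$ in $L^q$, because mollification converges in $L^q$ for $q<\infty$. Together with $K_\eta\bar u \in \feas$ (up to the boundary effect, addressed below) and the Lipschitz continuity in \cref{ass:j}, this gives $\nabla_R j(K_\eta \bar u) \to \nabla_R j(\bar u)$ in $L^{q'}$.
\item $K_\eta^* g \to g$ in $L^{q'}$ for fixed $g$, and $K_\eta^*$ is uniformly bounded.
\item $\bar u_\eta - \bar u \to \tilde u - \bar u$ in $L^q$.
\end{itemize}
Next, $K_\eta(\bar u_\eta - \bar u) \to \tilde u - \bar u$ in $L^q$, using uniform boundedness of $K_\eta$ and convergence of $K_\eta$ on the fixed limit. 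Finally, lower semicontinuity of $\TV$ under $L^1$ convergence gives
\[
J(\tilde u) \le \liminf_{\eta \searrow 0} J_\eta(\bar u_\eta) \le \TV(\bar u) = J(\bar u).
\]
Since $\bar u$ is the unique minimizer of the strictly convex problem \eqref{eq:cvx_ubar}, we get $\tilde u = \bar u$. The subsequence-of-subsequence argument then yields convergence of the whole family.

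\textbf{Step 4: strict convergence.} From the previous chain, $\limsup \TV(\bar u_\eta) \le \TV(\bar u)$ once the other two terms are known to converge. The linear term converges to $0$ and the power term to $\frac1q\|\bar u-\bar u\|^q = 0$. Combined with lower semicontinuity, this gives $\TV(\bar u_\eta) \to \TV(\bar u)$, i.e.\ strict convergence.

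\textbf{Main obstacle.} The delicate point is the convergence $\nabla_R j(K_\eta\bar u) \to \nabla_R j(\bar u)$. Because $u_0$ is extended by zero, $K_\eta \bar u$ may fall below $u_\ell$ near $\partial\Omega$, so $K_\eta\bar u$ need not lie in $\feas$. In that case \cref{ass:j} does not apply directly, and $j$ is not even defined there by \cref{prp-new:elementary_properties}. I would first try to interpret $j_\eta$ so that $K_\eta$ maps $\feas$ into $\feas$, e.g.\ via an extension preserving the bounds, or via a normalized mollifier restricted to $\Omega$. Failing that, I would work with the assumption that $j$ and $\nabla_R j$ are defined and Lipschitz on a slightly larger set containing the mollified controls. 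Once that is settled, the rest is the routine $L^q$/$L^{q'}$ duality bookkeeping above.
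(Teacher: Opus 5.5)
Your proposal is correct and follows essentially the same $\Gamma$-convergence route as the paper: a BV bound from comparing with the feasible point $\bar u$, compactness, convergence of the two smooth terms, lower semicontinuity of $\TV$, the constant recovery sequence $\bar u$, and uniqueness by strict convexity. Your Step~4 makes explicit the strict convergence that the paper leaves implicit. The boundary issue you flag, that $K_\eta\bar u$ need not lie in $\feas$, so $\nabla_R j(K_\eta\bar u)$ requires $j$ to be defined and Lipschitz beyond $\feas$, is not addressed in the paper either, which simply asserts that the first two terms converge. One small fix: in Step~1 the uniform bound you need is on $\nabla_R j(K_\eta\bar u)$ rather than on $\nabla_R j(\bar u)$, and your Step~3 convergence argument already supplies it.
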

\begin{proof}
This follows with a $\Gamma$-convergence-type argument, which we summarize in the interest of completeness.
Because $\bar{u}$ is unique, the claim follows if the sequence $(\bar{u}_\eta)_\eta$ admits accumulation points and all
of them are optimal for \eqref{eq:cvx_ubar}. The existence of accumulation points in $L^1(\Omega)$ follows from the
boundedness in $\BV(\Omega)$ due to the bound constraints and the $\TV$-term in the objective and the feasibility of $\bar{u}$
for all $\eta > 0$. Let $u_\eta \to u^*$ in $L^1(\Omega)$ as $\eta \searrow 0$, we obtain that the first two terms
of the objective of \eqref{eq:cvx_ubar_eta} with $u = u_\eta$ converge to the first terms of \eqref{eq:cvx_ubar} with $u = u^*$.
Moreover, since the feasible sets of \eqref{eq:cvx_ubar_eta} and \eqref{eq:cvx_ubar} coincide and are closed,
the feasibility of a subsequence of $(u_\eta)_\eta$ implies feasibility of the limit $u^*$. Finally,
the $\TV$-seminorm is lower semicontinuous so that $\TV(u^*) \le \liminf_{\eta\searrow 0} \TV(\bar{u}_\eta)$.
Moreover, $u_\eta = \bar{u}$ for all $\eta$ is a feasible recovery sequence with $u_\eta \to \bar{u}$ and
$\TV(u_\eta) \to \TV(\bar{u})$ for the optimal solution $\bar{u}$ to \eqref{eq:cvx_ubar}. Consequently, every accumulation point
of $(\bar{u}_\eta)$ minimizes \eqref{eq:cvx_ubar} and thus coincides with $\bar{u}$ with the famous $\Gamma$-convergence
optimality argument; see, e.g., Proposition 1.18 in \cite{braides2002gamma}.
\end{proof}
As the final auxiliary result bevor we can show our optimality condition,
we now strengthen the $L^2$-estimate of Lemma~3.5 in \cite{natemeyer2022penalty}
to an $L^{q'}$-estimate for $q' \in(1,2)$. The main idea is to test the
variational identity \eqref{VI-lemma} below with nonlinear powers of the multipliers, 
specifically with $v = (\lambda^\ell_\eta)^{q' - 1}$ and $v = (\lambda^u_\eta)^{q' - 1}$. 
Since these are not known to be admissible test functions, in particular not in
$H^1(\Omega)$, a priori, we proceed via a truncation argument and monotone convergence
to prove the claim.
\begin{lemma}\label{Lq-mult}
Let \cref{ass:j} hold. Assume $u_\eta\in H^1(\Omega)$ and let  the smooth approximation of the maximum function be defined by
\[
\maxrho(x) \coloneqq
\begin{cases}
\max\{0,x\}, & |x|\ge \tfrac{1}{2\rho},\\[6pt]
\dfrac{\rho}{2}\,\Big(x + \tfrac{1}{2\rho}\Big)^2, & |x| < \tfrac{1}{2\rho},
\end{cases}
\qquad \text{for }\rho>0.
\]
Let
$ \lambda^\ell_\eta(x)\coloneqq\maxrho\big(\rho(u_\ell-u_\eta(x))\big),
\;
\lambda^u_\eta(x)
\coloneqq\maxrho\big(\rho(u_\eta(x)-u_u)\big).$ 
Suppose further that, for all $v\in H^1(\Omega)$,
\begin{equation}\label{VI-lemma}
\int_\Omega \psi'_\varepsilon(\nabla u_\eta)\cdot\nabla v\,d x
-\int_\Omega \lambda^\ell_\eta v\,dx
+\int_\Omega \lambda^u_\eta v\,dx
=\int_\Omega g_\eta v \,d x
\end{equation}
holds with
$
g_\eta \coloneqq \nabla_R j_\eta(\bar u)
+
K_\eta^*\big(|K_\eta(u_\eta-\bar u)|^{q-2}\,K_\eta(u_\eta-\bar u)\big),
$
where $K_\eta$ is a standard mollifier with
$\|K_\eta\|_{\calL(L^r(\Omega))}=\|K_\eta^*\|_{\calL(L^r(\Omega))}=1$
for $1\le r\le\infty$, and $\psi_\varepsilon$ is the $\TV$-smoothing
from \S3 in \cite{natemeyer2022penalty} with $\psi'_\varepsilon(\xi)\cdot\xi\ge 0$
for all $\xi\in\mathbb{R}^n$.
Assume, in addition, that $\rho^2\ge (u_u-u_\ell)^{-1}$ so that $\supp(\lambda^\ell_\eta)\cap\supp(\lambda^u_\eta)=\emptyset$
holds; see Lemma 3.4 in \cite{natemeyer2022penalty}.

Then
\[
\|\lambda^\ell_\eta\|_{L^{q'}(\Omega)}+\|\lambda^u_\eta\|_{L^{q'}(\Omega)}
\le 2\|\nabla j_\eta(\bar u)\|_{L^{q'}(\Omega)} + 2 \|u_\eta-\bar u\|_{L^{q}(\Omega)}^{q - 1}.
\]
\end{lemma}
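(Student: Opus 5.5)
We would test \eqref{VI-lemma} with truncated powers of the lower multiplier and pass to the limit by monotone convergence, then repeat with the upper multiplier. Fix $M>0$ and set $v_M \coloneqq \min\{\lambda^\ell_\eta, M\}^{q'-1}$.

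\emph{Admissibility of $v_M$.} Since $u_\eta\in H^1(\Omega)$ and $\maxrho$ is $C^1$ with bounded derivative on bounded sets, $\lambda^\ell_\eta = \maxrho(\rho(u_\ell-u_\eta))\in H^1(\Omega)$ by the chain rule. The map $t\mapsto t^{q'-1}$ is not Lipschitz at $0$ because $q'-1\in(0,1)$. We therefore use $v_{M,\delta}\coloneqq (\min\{\lambda^\ell_\eta,M\}+\delta)^{q'-1}-\delta^{q'-1}$ with $\delta>0$. This function is a Lipschitz image of an $H^1$ function, hence lies in $H^1(\Omega)$. It is nonnegative, bounded, nondecreasing in $\lambda^\ell_\eta$, and vanishes wherever $\lambda^\ell_\eta=0$.

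\emph{Sign of the $\TV$ term.} We have
\[
\nabla v_{M,\delta} = (q'-1)(\cdot)^{q'-2}\,\mathbf 1_{\{\lambda^\ell_\eta<M\}}\,\nabla\lambda^\ell_\eta,
\qquad
\nabla\lambda^\ell_\eta = -\rho\,\maxrho'(\cdot)\,\nabla u_\eta .
\]
Thus $\nabla v_{M,\delta}$ is a nonnegative multiple of $-\nabla u_\eta$, and $\psi'_\varepsilon(\nabla u_\eta)\cdot\nabla v_{M,\delta}\le 0$ by the assumed monotonicity $\psi'_\varepsilon(\xi)\cdot\xi\ge 0$.

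\emph{Estimate for $\lambda^\ell_\eta$.} Because $\supp(\lambda^\ell_\eta)\cap\supp(\lambda^u_\eta)=\emptyset$ and $v_{M,\delta}$ vanishes off $\supp\lambda^\ell_\eta$, the $\lambda^u_\eta$ term drops out. Identity \eqref{VI-lemma} then gives
\[
\int_\Omega \lambda^\ell_\eta v_{M,\delta}\,dx \le -\int_\Omega g_\eta v_{M,\delta}\,dx \le \|g_\eta\|_{L^{q'}}\|v_{M,\delta}\|_{L^{q}}.
\]
Let $\delta\searrow0$ by dominated convergence, which is justified since everything is bounded by $M$. With $m\coloneqq\min\{\lambda^\ell_\eta,M\}$ we get $\int m^{q'}\le\int\lambda^\ell_\eta m^{q'-1}\le \|g_\eta\|_{L^{q'}}\|m^{q'-1}\|_{L^q}$. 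Since $(q'-1)q=q'$, we have $\|m^{q'-1}\|_{L^q}=\|m\|_{L^{q'}}^{q'/q}$. Dividing yields $\|m\|_{L^{q'}}^{q'(1-1/q)}=\|m\|_{L^{q'}}\le\|g_\eta\|_{L^{q'}}$; this division is legitimate because $m$ is bounded and hence $\|m\|_{L^{q'}}$ is finite. Letting $M\to\infty$ and applying monotone convergence gives $\|\lambda^\ell_\eta\|_{L^{q'}}\le\|g_\eta\|_{L^{q'}}$.

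\emph{Estimate for $\lambda^u_\eta$.} The symmetric test with $-(\min\{\lambda^u_\eta,M\}+\delta)^{q'-1}+\delta^{q'-1}$ works the same way. Here $\nabla\lambda^u_\eta$ is a nonnegative multiple of $\nabla u_\eta$, so after the sign flip the $\TV$ term is again nonpositive, and the same argument gives $\|\lambda^u_\eta\|_{L^{q'}}\le\|g_\eta\|_{L^{q'}}$.

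\emph{Bounding $g_\eta$.} By the triangle inequality and $\|K_\eta\|=\|K_\eta^*\|=1$ on $L^{q'}$ and $L^q$,
\[
\|g_\eta\|_{L^{q'}}\le \|\nabla j_\eta(\bar u)\|_{L^{q'}}+\big\||K_\eta(u_\eta-\bar u)|^{q-1}\big\|_{L^{q'}}
=\|\nabla j_\eta(\bar u)\|_{L^{q'}}+\|K_\eta(u_\eta-\bar u)\|_{L^q}^{q-1},
\]
where the equality uses $(q-1)q'=q$. The last term is at most $\|u_\eta-\bar u\|_{L^q}^{q-1}$. Adding the two multiplier bounds gives the claimed estimate with the factor $2$.

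\emph{Main obstacle.} The delicate step is the admissibility and sign argument for the nonlinear test function. It requires the $\delta$-shift to avoid the singular derivative of $t^{q'-1}$ at $0$, the truncation at $M$ to keep the function bounded, and checking that each of these limits is justified.
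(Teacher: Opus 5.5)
Your proposal is correct and follows essentially the same route as the paper: you test \eqref{VI-lemma} with truncated $(q'-1)$-powers of the multipliers, sign the $\TV$ term via the chain rule and $\psi'_\varepsilon(\xi)\cdot\xi\ge 0$, and use the disjoint supports, H\"older, and monotone convergence, then bound $\|g_\eta\|_{L^{q'}}$ exactly as the paper does. The differences are only technical: your $\delta$-shift replaces the paper's clamping at $1/k$ with mollified truncations $T_{k,m}$, so your test function vanishes where $\lambda^\ell_\eta=0$ and the $\lambda^u_\eta$-term drops out exactly rather than being $O(k^{1-q'})$; and dividing at the truncated level spares you a limit passage in the $g_\eta$ term.
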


\begin{proof}
We use a truncation argument and for $k\in\mathbb{N}$ we set
\[
\tilde{T}_k(s) \coloneqq
\begin{cases}
\dfrac{1}{k}, & 0 \le s < \dfrac{1}{k}, \\[4pt]
s, & \dfrac{1}{k} \le s < k, \\[4pt]
k, & k \le s
\end{cases}
\qquad \text{for }s\in\mathbb{R}.
\]

We approximate $\tilde T_k$ by smooth monotone functions. Let $(\rho_m)_{m\in\mathbb N}$ be a standard family of nonnegative mollifiers and set
$
T_{k,m}:=\rho_m * \tilde T_k.$
Then $T_{k,m}\in C^\infty(\mathbb R)$, $T_{k,m}$ is nondecreasing,
\[
0\le T_{k,m}' (s)\le 1,
\qquad
\frac1k\le T_{k,m}(s)\le k
\quad\text{for all } s\in\mathbb R,
\]
and $T_{k,m}\to \tilde T_k$ locally uniformly on $\mathbb R$ as $m\to\infty$.

Since $\lambda_\eta^i\ge 0$ and $\lambda_\eta^i\in H^1(\Omega)$ for $i\in\{\ell,u\}$, we define
\[
v_{k,m,\ell}:=\bigl(T_{k,m}(\lambda^\ell_\eta)\bigr)^{q'-1},
\qquad
v_{k,m,u}:=\bigl(T_{k,m}(\lambda^u_\eta)\bigr)^{q'-1}.
\]
Since $T_{k,m} $ is smooth and $\lambda_\eta^i\in H^1(\Omega)$, composition preserves $H^1$-regularity, and $
v_{k,m,i}\in H^1(\Omega)\cap L^\infty(\Omega)
$ for $i\in\{\ell,u\}.$
Moreover,
\[
\nabla v_{k,m,\ell}
=(q'-1)\bigl(T_{k,m}(\lambda^\ell_\eta)\bigr)^{q'-2}
\,T'_{k,m}(\lambda^\ell_\eta)\,\nabla\lambda^\ell_\eta
\quad\text{a.e.\ in }\Omega.
\]
Testing \eqref{VI-lemma} with $v=v_{k,m,\ell}$ yields
\begin{equation}\label{eq:test-vk}
\int_\Omega \psi'_\varepsilon(\nabla u_\eta)\cdot\nabla v_{k,m,\ell}\,dx
-\int_\Omega \lambda^\ell_\eta\,v_{k,m,\ell}\,dx
+\int_\Omega \lambda^u_\eta\,v_{k,m,\ell}\,dx
=
\int_\Omega g_\eta\,v_{k,m,\ell}\,dx.
\end{equation}

It will prove useful below that the definition of $\lambda^\ell_\eta$ gives
\[
  \nabla \lambda^\ell_\eta
  =\maxrho\nolimits'(\rho(u_\ell-u_\eta))\rho(-\nabla u_\eta)
  = -\alpha_\ell(u_\eta) \rho \nabla u_\eta
  \quad \text{with }0\le \alpha_\ell(u_\eta)\le 1,
\]
where $\alpha_\ell(u_\eta) = \maxrho\nolimits'(\rho(u_\ell-u_\eta))$
and the derivative of the smoothed maximum is given by
\[
\maxrho\nolimits'(x)
=
\begin{cases}
1, & |x|\ge \tfrac{1}{2\rho},\\[4pt]
\rho \bigl(x+\tfrac{1}{2\rho}\bigr), & |x|<\tfrac{1}{2\rho}.
\end{cases}
\]
Hence $0 \le \maxrho\nolimits'(x) \le 1$ for all $x\in\mathbb{R}$,
independently of $\rho$. Therefore $\alpha_\ell(u_\eta)$ is uniformly bounded by $1$.
In addition, by assumption on $\psi_\varepsilon$, we have $\psi'_\varepsilon(\xi)\cdot\xi\ge 0$
for all $\xi\in\mathbb{R}^n$. Thus
\[
  \psi'_\varepsilon(\nabla u_\eta)\cdot\nabla v_{k,m,\ell}
  = -(q'-1)\alpha_\ell(u_\eta)\rho\big(T_{k,m}(\lambda^\ell_\eta)\big)^{q'-2}
     T'_{k,m}(\lambda^\ell_\eta)
     \big(\psi'_\varepsilon(\nabla u_\eta)\cdot\nabla u_\eta\big)
  \le 0.
\]
Hence from \eqref{eq:test-vk}, we obtain
\begin{equation}\label{eq:key-ineq-m}
-\int_\Omega \lambda^\ell_\eta\,v_{k,m,\ell}\,dx
+\int_\Omega \lambda^u_\eta\,v_{k,m,\ell}\,dx
\ge
\int_\Omega g_\eta\,v_{k,m,\ell}\,dx
\end{equation}
and passing to the limit $m\to\infty$ yields
\begin{equation}\label{eq:key-ineq}
-\int_\Omega \lambda^\ell_\eta\,v_{k,\ell}\,dx
+\int_\Omega \lambda^u_\eta\,v_{k,\ell}\,dx
\ge
\int_\Omega g_\eta\,v_{k,\ell}\,dx.
\end{equation}

Next, by the support condition
$\supp(\lambda^\ell_\eta)\cap\supp(\lambda^u_\eta)=\emptyset$, we have
that $\lambda^u_\eta>0$ implies $\lambda^\ell_\eta=0$, and therefore
$\tilde T_k(\lambda^\ell_\eta)=\tilde T_k(0)=1/k$ on $\supp(\lambda^u_\eta)$.
Thus
\[
  \int_\Omega \lambda^u_\eta\,v_{k,\ell}\,dx
  = \Big(\frac{1}{k}\Big)^{q'-1}\int_\Omega \lambda^u_\eta\,dx
  \le k^{1-q'}\|\lambda^u_\eta\|_{L^1(\Omega)}
  \to 0
  \qquad \text{as } k\to\infty.
\]

Since $\tilde T_k(\lambda^\ell_\eta)\to \lambda^\ell_\eta$ pointwise as $k\to\infty$ and
the sequence is monotonically increasing, passing to the limit $k\to\infty$ in
\eqref{eq:key-ineq} and applying the monotone convergence theorem yields
\[
  -\int_\Omega (\lambda^\ell_\eta)^{q'}\,dx
  \ge
  \int_\Omega g_\eta\,(\lambda^\ell_\eta)^{q'-1}\,dx .
\]
Therefore,
$\|\lambda^\ell_\eta\|_{L^{q'}(\Omega)}^{q'}
  = \int_\Omega (\lambda^\ell_\eta)^{q'}\,dx
  \le - \int_\Omega g_\eta\,(\lambda^\ell_\eta)^{q'-1}\,dx
  \le \|g_\eta\|_{L^{q'}(\Omega)}
     \|(\lambda^\ell_\eta)^{q' - 1}\|_{L^q(\Omega)}.
$
Since
$  \|(\lambda^\ell_\eta)^{q'-1}\|_{L^q(\Omega)}
  = \|\lambda^\ell_\eta\|_{L^{q'}(\Omega)}^{q'-1},
$
we obtain that
$\|\lambda^\ell_\eta\|_{L^{q'}(\Omega)}^{q'}
  \le
  \|g_\eta\|_{L^{q'}(\Omega)}
  \|\lambda^\ell_\eta\|_{L^{q'}(\Omega)}^{q'-1},
$ and hence
\[
\|\lambda^\ell_\eta\|_{L^{q'}(\Omega)}
\le \|g_\eta\|_{L^{q'}(\Omega)}.
\]

The same argument with $v=v_{k,m,u}$ shows
\[
\|\lambda^u_\eta\|_{L^{q'}(\Omega)}\le \|g_\eta\|_{L^{q'}(\Omega)},
\]
and therefore
\[
 \|\lambda^\ell_\eta\|_{L^{q'}(\Omega)}
 + \|\lambda^u_\eta\|_{L^{q'}(\Omega)}
 \le 2\|g_\eta\|_{L^{q'}(\Omega)} .
\]

Finally,
\[
  g_\eta
  = \nabla j_\eta(\bar u)
  + K_\eta^*\!\big(|K_\eta(u_\eta-\bar u)|^{q-2}K_\eta(u_\eta-\bar u)\big)
\]
and $\|K_\eta\|_{\calL(L^{q}(\Omega))}=\|K_\eta^*\|_{\calL(L^{q'}(\Omega))}=1$ imply
\[
  \|g_\eta\|_{L^{q'}(\Omega)}
  \le \|\nabla j_\eta(\bar u)\|_{L^{q'}(\Omega)}
  + \|(K_\eta(u_\eta-\bar u))^{q-1}\|_{L^{q'}(\Omega)}
  \le \|\nabla j_\eta(\bar u)\|_{L^{q'}(\Omega)}
  + \|u_\eta-\bar u\|_{L^q(\Omega)}^{q-1},
\]
which completes the proof.
\end{proof}

\begin{theorem}\label{thm:stationarity}
Let \cref{ass:j} hold.	
Let $\bar{u}$ be a local minimizer of \eqref{eq:r} with respect to $\BV(\Omega) \cap L^q(\Omega)$.
Then $\bar{u}$ satisfies the following first-order optimality condition.
There exist $\bar{\sigma} \in L^\infty(\Omega)$ with $\dvg \bar{\sigma} \in L^{q'}(\Omega)$,
$\bar{\lambda}^\ell \in L^{q'}(\Omega)$, $\bar{\lambda}^u \in L^{q'}(\Omega)$ such that
\begin{align*}
-\dvg \bar{\sigma} - \bar{\lambda}^\ell + \bar{\lambda}^u &= -\nabla_R j(\bar{u}) \\
\bar{\lambda}^\ell &\ge 0 \\
\bar{\lambda}^u &\ge 0 \\
\langle\bar{\lambda}^\ell, u_\ell - \bar{u}\rangle_{L^q,L^{q'}} &= 0\\
\langle\bar{\lambda}^u, \bar{u} - u_u\rangle_{L^q,L^{q'}} &= 0 \\
\|\bar{\sigma}\|_{L^\infty(\Omega)} &= 1 \\
\TV(\bar{u}) &= -\int_{\Omega} \bar{u} \dvg \bar{\sigma} \\
\TV(u) - \TV(\bar{u}) &\ge \int_{\Omega} \dvg \bar{\sigma}(u - \bar{u}) \text{ for all } u \in \feas \text{(!)}.
\end{align*}
\end{theorem}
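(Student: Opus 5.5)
The plan is to follow the approximation strategy of \cite{natemeyer2022penalty}, with the $L^{q'}$-bounds of \cref{Lq-mult} in place of $L^2$-bounds. First I use \cref{lem:local_minimizers_minimize_cvx_ubar}: $\bar u$ is the unique global minimizer of \eqref{eq:cvx_ubar}. Then I pass to the regularized convex problems \eqref{eq:cvx_ubar_eta}, whose minimizers $\bar u_\eta$ converge to $\bar u$ in $L^q(\Omega)$ and strictly in $\BV(\Omega)$ by \cref{lem:ubar_eta_strictly}. For fixed $\eta$, I further smooth $\TV$ by $\psi_\varepsilon$ and penalize the box constraints by $\maxrho$, as in \S3 of \cite{natemeyer2022penalty}, obtaining $H^1$-minimizers $u_\eta$ that satisfy the variational identity \eqref{VI-lemma}. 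Through a diagonal choice $\varepsilon,\rho^{-1}\to 0$ along with $\eta$, these also converge to $\bar u$ in $L^q$ and strictly. (Theorems 3.12 and 4.3 there provide the limits $\varepsilon\to0$, $\rho\to\infty$ at fixed $\eta$, and the same $\Gamma$-convergence argument applies jointly.)

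\textbf{Uniform bounds.} \cref{Lq-mult} gives
\[
\|\lambda^\ell_\eta\|_{L^{q'}}+\|\lambda^u_\eta\|_{L^{q'}} \le 2\|\nabla_R j_\eta(\bar u)\|_{L^{q'}}+2\|u_\eta-\bar u\|_{L^q}^{q-1}.
\]
The right-hand side is bounded because $\|K_\eta^*\|=1$, \cref{ass:j} holds, and $u_\eta$ is bounded in $L^\infty$. The field $\sigma_\eta \coloneqq \psi'_\varepsilon(\nabla u_\eta)$ satisfies $\|\sigma_\eta\|_{L^\infty}\le 1$. From \eqref{VI-lemma} we have $-\dvg\sigma_\eta = g_\eta+\lambda^\ell_\eta-\lambda^u_\eta$, so $\dvg\sigma_\eta$ is bounded in $L^{q'}$. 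I extract weak limits $\lambda^\ell_\eta\weakto\bar\lambda^\ell$ and $\lambda^u_\eta\weakto\bar\lambda^u$ in $L^{q'}$ (reflexive, since $q'>1$), $\sigma_\eta\weakstarto\bar\sigma$ in $L^\infty$, and $\dvg\sigma_\eta\weakto\dvg\bar\sigma$ in $L^{q'}$. Since $g_\eta\to\nabla_R j(\bar u)$ in $L^{q'}$ (mollifier convergence, Lipschitz continuity of $\nabla_R j$, and $u_\eta\to\bar u$ in $L^q$), the first equation follows in the limit. Nonnegativity of the multipliers is preserved under weak limits.

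\textbf{Complementarity.} On the support of $\lambda^\ell_\eta$ we have $u_\eta\le u_\ell+1/(2\rho^2)$, so $\int\lambda^\ell_\eta(u_\ell-u_\eta)\ge -\|\lambda^\ell_\eta\|_{L^1}/(2\rho^2)\to 0$. Using the strong $L^q$-convergence of $u_\eta$ against the weak $L^{q'}$-convergence of $\lambda^\ell_\eta$, the limit satisfies $\langle\bar\lambda^\ell,u_\ell-\bar u\rangle\ge0$. The reverse inequality holds because $\bar u\ge u_\ell$ and $\bar\lambda^\ell\ge0$, giving equality; the upper multiplier is handled the same way.

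\textbf{TV relations, the main obstacle.} Passing to the limit in $\int\psi'_\varepsilon(\nabla u_\eta)\cdot\nabla u_\eta\ge \psi_\varepsilon(\nabla u_\eta)-c\varepsilon$ (convexity) gives $-\int\bar u\,\dvg\bar\sigma\ge\TV(\bar u)$. Here the pairing $\int u_\eta\dvg\sigma_\eta\to\int\bar u\dvg\bar\sigma$ is justified by strong $L^q$ against weak $L^{q'}$ convergence, and the strict convergence gives $\psi_\varepsilon$-energies tending to $\TV(\bar u)$. Conversely, since $|\bar\sigma|\le1$, the definition of $\TV$ gives $-\int\bar u\,\dvg\bar\sigma\le\TV(\bar u)$; this requires the test-field definition to admit $\bar\sigma\in L^\infty$ with $L^{q'}$-divergence, i.e.\ approximating $\bar\sigma$ by smooth fields compactly supported in $\Omega$, which is delicate near the boundary. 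I expect this to be the main technical point. For it I would use the Anzellotti pairing $(\bar\sigma,D\bar u)$ and its normal trace, with the boundary condition inherited from \eqref{VI-lemma} holding for all $v\in H^1$. This gives the equality $\TV(\bar u)=-\int\bar u\dvg\bar\sigma$, and then $\|\bar\sigma\|_{L^\infty}=1$ (if $\TV(\bar u)=0$, one may instead choose $\bar\sigma$ with norm one and zero divergence adjustments). The restricted subgradient inequality then follows for $u\in\feas$, which lies in $L^\infty\subset L^q$: $-\int u\,\dvg\bar\sigma\le\TV(u)$ by the same bound, and subtracting the equality yields the last line.
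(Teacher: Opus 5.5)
Your proof is correct in outline and shares the paper's skeleton: \cref{lem:local_minimizers_minimize_cvx_ubar}, then \eqref{eq:cvx_ubar_eta}, then the $L^{q'}$-bounds of \cref{Lq-mult} and weak limits. Several steps, however, run differently.

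\textbf{How the paper proceeds.} The paper first invokes Theorems 3.12 and 4.3 of \cite{natemeyer2022penalty} at fixed $\eta$. This gives $u_\eta\in\feas$ solving \eqref{eq:cvx_ubar_eta} exactly, with $-\dvg\sigma_\eta\in\partial\TV(u_\eta)$ and exact complementarity; only then does it let $\eta\searrow 0$. It obtains $\TV(\bar u)=-\int_\Omega \bar u\dvg\bar\sigma$ by passing to the limit in $\TV(u_\eta)=-\int_\Omega u_\eta\dvg\sigma_\eta$ (Proposition 7 of \cite{bredies2016pointwise}), which needs the strict convergence from \cref{lem:ubar_eta_strictly}. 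The last inequality comes from a density result for $\bar\sigma$ \cite{hintermuller2015density}.

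\textbf{How your route differs, and what it buys.} You collapse smoothing, penalization and mollification into one diagonal sequence. As a result, \cref{Lq-mult} is applied literally in the setting in which it is stated; the paper applies it to the $\eta$-level multipliers, for which the bound has to be inherited from penalized approximations. You get complementarity from $u_\eta<u_\ell+1/(2\rho^2)$ on $\{\lambda^\ell_\eta>0\}$. You split the TV identity into two one-sided estimates:
\begin{itemize}
\item convexity of $\psi_\varepsilon$ plus lower semicontinuity, so strict convergence is not actually needed;
\item the Anzellotti bound $-\int_\Omega w\dvg\bar\sigma=(\bar\sigma,Dw)(\Omega)\le\TV(w)$ for $w\in\BV(\Omega)\cap L^\infty(\Omega)$, available because the weak Neumann condition in \eqref{VI-lemma} passes to the limit.
\end{itemize}
That same bound yields the restricted subgradient inequality. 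Your treatment of $\|\bar\sigma\|_{L^\infty}=1$ is also more careful than the paper's, since weak-$^*$ convergence alone only gives $\le 1$.

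\textbf{The one step you still need to prove.} Your route relies on $u_\eta$ being bounded in $L^\infty(\Omega)$, and this is asserted without proof. In the paper it is free because $u_\eta\in\feas$; your penalized minimizers are not feasible. Since $\BV(\Omega)$ only embeds into $L^{d/(d-1)}(\Omega)$ while $q>2$, without this bound neither the right-hand side of \cref{Lq-mult} nor the strong $L^q$-convergence used for $\int_\Omega u_\eta\dvg\sigma_\eta$ is under control.

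You can supply it by testing \eqref{VI-lemma} with $(u_\eta-k)_+$, $k\ge u_u+1/(2\rho^2)$:
\begin{itemize}
\item the $\psi'_\varepsilon$-term is nonnegative;
\item on $\{u_\eta>k\}$ one has $\lambda^\ell_\eta=0$ and $\lambda^u_\eta=\rho(u_\eta-u_u)$;
\item hence $u_\eta\le u_u+\max\{1/(2\rho^2),\|g_\eta\|_{L^\infty}/\rho\}$ a.e., and symmetrically from below.
\end{itemize}
For fixed $\eta$, $\|g_\eta\|_{L^\infty}$ is bounded independently of $\varepsilon$ and $\rho$. Indeed, mollification maps $L^{q'}$ and $L^1$ boundedly into $L^\infty$, and $\|u_\eta-\bar u\|_{L^1}$ is bounded by comparing objective values with a constant feasible function. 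Choosing $\rho=\rho(\eta)$ large enough in the diagonal sequence then gives $u_\eta\in[u_\ell-\delta_\eta,u_u+\delta_\eta]$ with $\delta_\eta\to 0$. This also yields feasibility of the limit and, together with the $\TV$-bound, compactness in $L^q(\Omega)$.
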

\begin{proof}
\Cref{lem:local_minimizers_minimize_cvx_ubar} gives that $\bar{u}$
is a global minimizer of \eqref{eq:cvx_ubar}.
Next, we consider the solutions to \eqref{eq:cvx_ubar_eta},
which we know converge to $\bar{u}$ by virtue of \cref{lem:ubar_eta_strictly}.
We note that, because of the mollification regularization, the objective term
\[ (\feas,\|\cdot\|_{L^2(\Omega)}) \ni u \mapsto \int_{\Omega} \nabla_R j_\eta(\bar{u}) (u - \bar{u}) + \frac{1}{q}\|K_\eta (u - \bar{u})\|_{L^{q}(\Omega)}^{q} \in \R\]
is continuously Fr\'{e}chet differentiable for all fixed $\eta > 0$
(and can be extended to a continuously Fr\'{e}chet differentiable function on all of $L^2(\Omega)$). We thus apply Theorems 3.12 and 4.3 in \cite{natemeyer2022penalty}
so that we obtain for all $\eta > 0$ that, in addition to the solution
$u_\eta \in \feas$ of \eqref{eq:cvx_ubar_eta}, there exist
$\sigma_\eta \in L^\infty(\Omega)$ with $\dvg \sigma_\eta \in L^2(\Omega)$, $\lambda^\ell_\eta \in L^2(\Omega)$,
and $\lambda^u_\eta \in L^2(\Omega)$ such that the optimality system
\begin{align*}
-\dvg \sigma_\eta - \lambda^\ell_\eta + \lambda^u_\eta &= -\nabla_R j_\eta(\bar{u}) 
- K_\eta^* \big(K_\eta(u_\eta - \bar{u})|K_\eta(u_\eta - \bar{u})|^{q-2}\big) \\
-\dvg \sigma_\eta &\in \partial \TV (u_\eta) \\
\lambda^\ell_\eta &\ge 0 \\
\lambda^u_\eta &\ge 0 \\
(\lambda^\ell_\eta, u_\ell - u_\eta)_{L^2} &= 0\\
(\lambda^u_\eta, u_\eta - u_u)_{L^2} &= 0
\end{align*}
is satisfied. We apply \cref{Lq-mult} and obtain
$\|\lambda^\ell_\eta\|_{L^{q'}(\Omega)} + \|\lambda^u_\eta\|_{L^{q'}(\Omega)}
\le 2\|\nabla j_\eta(\bar{u})\|_{L^{q'}(\Omega)} 
   + 2 \|u_\eta - \bar{u}\|_{L^{q}(\Omega)}^{q-1}$ 
as well. Then we obtain with the same argument as in the proof of
\cref{Lq-mult} and the fact that $\nabla_R j : (\feas, \|\cdot\|_{L^{q}(\Omega)}) \to L^{q'}(\Omega)$ is Lipschitz continuous the inequality
\begin{multline*}
 \|\dvg \sigma_\eta\|_{L^{q'}(\Omega)} + \|\lambda^u_\eta\|_{L^{q'}(\Omega)} + \|\lambda^u_\eta\|_{L^{q'}(\Omega)}\\
\le 3 \big(\|\nabla_R j(\bar{u})\|_{L^{q'}(\Omega)} + 2L\|\bar{u}\|_{L^{q}(\Omega)}  + \lambda(\Omega)^{\frac{q-1}{q}}(u_u - u_\ell)^{q - 1}\big)
\end{multline*}
so that, after passing to a subsequence, we have
\begin{align*}
u_\eta &\to \bar{u} &&\text{in } L^{q}(\Omega),\\
\TV(u_\eta) &\to \TV(\bar{u}) && \text{in }\R\text{; see \cref{lem:ubar_eta_strictly}},\\
\lambda^\ell_\eta &\weakto \bar{\lambda}^\ell &&\text{in } L^{q'}(\Omega),\\
\lambda^u_\eta &\weakto \bar{\lambda}^u &&\text{in } L^{q'}(\Omega).
\end{align*}
Moreover, Proposition 7 from \cite{bredies2016pointwise} gives
$\|\sigma_\eta\|_{L^\infty} = 1$ for all $\eta$ so that we have
\begin{align*}
\sigma_\eta &\weakstarto \bar{\sigma}       &&\text{in } L^\infty(\Omega),\\
\dvg \sigma_\eta &\weakto \dvg \bar{\sigma} &&\text{in } L^{q'}(\Omega)
\end{align*}
because the space $W^{q'}_0(\dvg,\Omega)$ is reflexive. Moreover, we have $\|\bar{\sigma}\|_{L^\infty} = 1$.
We obtain
\begin{align*}
-\dvg \bar{\sigma} - \lambda^\ell + \bar{\lambda}^u &= \nabla_R j(\bar{u}),\\
\bar{\lambda}^\ell &\ge 0,\\
\bar{\lambda}^u &\ge 0,\\
\langle\bar{\lambda}^\ell, u_\ell - \bar{u}\rangle_{L^{q'},L^{q}} &= 0,\\
\langle\bar{\lambda}^u, \bar{u} - u_u\rangle_{L^{q'},L^{q}} &= 0.
\end{align*}
Moreover, we have $u_\eta$, $\bar{u} \in L^\infty(\Omega)$ so that
\[ \TV(\bar{u}) \leftarrow \TV(u_\eta) \underset{\text{Prp.\ 7 in \cite{bredies2016pointwise}}}= -\int_\Omega u_\eta \dvg \sigma_\eta \to -\int_\Omega \bar{u} \dvg \bar{\sigma}.
\]
In particular, $-\dvg \bar{\sigma}$ satisifies the subdifferential characterization
$\TV(\bar{u}) = -\int_\Omega \bar{u}\dvg \bar{\sigma}$, $\|\bar{\sigma}\|_{L^\infty(\Omega)} = 1$
of Proposition 7 in \cite{bredies2016pointwise} except that
$-\dvg \sigma \in L^{q'}(\Omega)$ only (and is not necessarily an element
of the dual space of an $L^p$-space that $\BV(\Omega)$ embeds continuously into).
For all $u \in \feas$, a smooth approximation of $\bar{\sigma}$ \cite{hintermuller2015density}
gives the (restricted) subdifferential inequality
\[ \TV(u) - \TV(\bar{u})
   \ge \int_{\Omega} \dvg \bar{\sigma} u - \TV(\bar{u})
     = \int_{\Omega} \dvg \bar{\sigma}(u - \bar{u}).
\]
\end{proof}

\section{Algorithmic considerations}\label{sec:tr_algorithm}
In this section, we provide a nonsmooth trust-region algorithm with
a convergence analysis for the setting of \cref{sec:first-order_opt}
and \cref{ass:j}, in particular $q > 2s/(s - 2)$.
Regularized coefficient optimization problems like \eqref{eq:r} have been analyzed intensively,
where the case of a regularization term that acts pointwisely is the subject of
\cite{wachsmuth2024control}, the case with $\TV$-regularization is the subject of
\cite{clason2018total,clason2021optimal}, and the case with $H^s$-regularization, $0 < s < 1$, is
the subject of \cite{assmann2013identification}. While \cite{clason2018total,clason2021optimal}
provide semismooth Newton-type or primal-dual splitting proximal splitting-type solution algorithms,
these works do not prove a global convergence results for this setting, which is
in particular nonreflexive in the presence of $\TV$-regularization.

We keep the $\TV$-regularization term in our problem setting and show convergence of
the algorithm to points that satisfy our first-order optimality condition
with respect to $\BV(\Omega) \cap L^q(\Omega)$, equipped with the norm topology,
from \cref{thm:stationarity}. However, our analysis below does not hinge on the specific
presence of the $\TV$-term but also works for general proper, convex, lower semi-continuous
functionals on Banach spaces.

\subsection{Algorithm design and convergence analysis}
As is common, we introduce the criticality measure
\[
\begin{aligned}
\calC(u) \coloneqq 
&&-\min_{v \in \feas}\ &\underbrace{\int_\Omega \nabla_R j(u) (v - u) + \frac{1}{q}\|v - u\|_{L^{q}}^{q}
+ \alpha\TV(v)}_{\eqqcolon m_{u}(v)} - \alpha\TV(u) 
\end{aligned}
\]
and similar $\calC_\eta$ and $m_{\eta,u}$ for \eqref{eq:cvx_ubar_eta} vs.\ \eqref{eq:cvx_ubar}. The
following lemma motivates to define stationary points as zeros of $\calC$.
\begin{lemma}
\begin{enumerate}
\item $\calC(u) \ge 0$ for all $u \in \feas$.
\item $\calC(\bar{u}) = 0$ if and only if $\bar{u} \in \feas$ solves \eqref{eq:cvx_ubar}.
\item If $\bar{u}$ is a local minimizer of \eqref{eq:r}, then $\calC(\bar{u}) = 0$.
\end{enumerate}
\end{lemma}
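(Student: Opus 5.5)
The plan is to read all three assertions off the definition of $\calC$, using that $m_u(u) = \alpha\TV(u)$ and the characterization of $\bar u$ from \cref{lem:local_minimizers_minimize_cvx_ubar}. Throughout, $\TV$ is weighted by $\alpha$ inside $m_u$ and \eqref{eq:cvx_ubar} is read with the same weight $\alpha$. Since $m_u$ is proper, convex and lower semicontinuous, and $\feas\cap\BV(\Omega)$ is bounded in $\BV(\Omega)$ along minimizing sequences, the minimum exists by the direct method. This is the same compactness argument as in \cref{lem:ubar_eta_strictly}: strong $L^1$ compactness combined with the box bounds gives $L^q$ convergence, so the linear and $L^q$-power terms are continuous along the sequence. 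Strict convexity of $v\mapsto \frac1q\|v-u\|_{L^q}^q$ (for $q>1$) makes the minimizer unique.

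For (1), insert the feasible candidate $v=u$. The linear term and the power term vanish, so $\min_{v} m_u(v) \le m_u(u) = \alpha\TV(u)$. Hence $\calC(u) = \alpha\TV(u) - \min_v m_u(v) \ge 0$. If $\TV(u)=\infty$ we adopt the convention $\calC(u)=\infty$, but for $u\in\feas\subset\BV(\Omega)$ this case does not arise.

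For (2), note that $\calC(\bar u)=0$ means exactly $\min_{v\in\feas} m_{\bar u}(v) = m_{\bar u}(\bar u)$. In other words, $\bar u$ attains the minimum of $m_{\bar u}$, which is precisely the objective of \eqref{eq:cvx_ubar} up to the constant $0$ at $v=\bar u$. Conversely, if $\bar u$ solves \eqref{eq:cvx_ubar}, the minimum equals $m_{\bar u}(\bar u)=\alpha\TV(\bar u)$, so $\calC(\bar u)=0$. Uniqueness is not needed here, only attainment.

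For (3), the plan is to invoke \cref{lem:local_minimizers_minimize_cvx_ubar}, which under \cref{ass:j} shows that a local minimizer $\bar u$ of \eqref{eq:r} with respect to $\BV(\Omega)\cap L^q(\Omega)$ globally minimizes \eqref{eq:cvx_ubar}; then (2) gives $\calC(\bar u)=0$. The only point requiring care, and the main obstacle, is that the proof of \cref{lem:local_minimizers_minimize_cvx_ubar} establishes only local minimality of $\bar u$ for \eqref{eq:cvx_ubar} and passes to global minimality via convexity. I would make this explicit. Given $u\in\feas$, the segment $\bar u + t(u-\bar u)$ lies in $\feas$, and for small $t$ it lies in the $\max\{\TV,\|\cdot\|_{L^q}\}$-ball around $\bar u$. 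Convexity of $m_{\bar u}$ along the segment then yields
\[
m_{\bar u}(\bar u) \le m_{\bar u}(\bar u + t(u-\bar u)) \le (1-t)\,m_{\bar u}(\bar u) + t\,m_{\bar u}(u),
\]
so $m_{\bar u}(\bar u)\le m_{\bar u}(u)$. The local inequality itself rests on $o(\|u-\bar u\|_{L^q}) \ge -\frac1q\|u-\bar u\|_{L^q}^q$ failing for small steps when $q>1$, so I would also double-check that step. The comparison at small $t$ should instead be done directly by using $j(\bar u)+\alpha\TV(\bar u)\le j(\bar u+t w)+\alpha\TV(\bar u+tw)$, the differentiability expansion, division by $t$, and $t\searrow0$ together with convexity of $\TV$. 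This gives $\int\nabla_R j(\bar u)w+\alpha\TV(u)-\alpha\TV(\bar u)\ge0$ for $w=u-\bar u$. Since the power term is nonnegative, this implies $m_{\bar u}(u)\ge m_{\bar u}(\bar u)$ directly.
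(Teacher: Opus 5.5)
Your arguments for (1) and (2) are the paper's: both follow from $m_u(u)=\alpha\TV(u)$ and the definition of $\calC$. This requires reading $\calC(u)=\alpha\TV(u)-\min_{v\in\feas}m_u(v)$, the only reading under which (1) holds, and using one common weight $\alpha$ in $m_u$, \eqref{eq:cvx_ubar} and \eqref{eq:r}. State the weight for \eqref{eq:r} explicitly, since your proof of (3) uses $j+\alpha\TV$. The existence and uniqueness discussion is harmless but unnecessary: with $\inf$ in place of $\min$, $\calC(\bar u)=0$ holds if and only if $\bar u$ attains the infimum.

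For (3) the paper only cites \cref{lem:local_minimizers_minimize_cvx_ubar}. You instead re-derive its conclusion, and your suspicion about that lemma's proof is justified. Its first inequality requires $\frac1q\|u-\bar u\|_{L^q}^q\ge j(u)-j(\bar u)-\int_\Omega\nabla_R j(\bar u)(u-\bar u)\,\dd x$ near $\bar u$. The right-hand side is only known to be $o(\|u-\bar u\|_{L^q})$, and under \cref{ass:j} it can be of exact order $\|u-\bar u\|_{L^q}^2$ (e.g.\ $j(u)=\frac12\|u\|_{L^2}^2$ with constant perturbations). Since $q>2$, this dominates the $q$-th power for small steps. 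The displayed chain also writes $\TV(\bar u)$ where $\TV(u)$ is meant.

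Your replacement is sound. Test local optimality with $\bar u+t(u-\bar u)\in\feas$, which lies in the $\BV(\Omega)\cap L^q(\Omega)$-neighbourhood for small $t$ because $\TV(u-\bar u)<\infty$. Bound $\TV$ by convexity, divide by $t$ and let $t\searrow0$. This gives $\int_\Omega\nabla_R j(\bar u)(u-\bar u)\,\dd x+\alpha\TV(u)-\alpha\TV(\bar u)\ge0$ for all $u\in\feas$. Adding the nonnegative power term yields global minimality of $\bar u$ for \eqref{eq:cvx_ubar} in one step. Compared with the paper's route, this gives a valid proof and shows that the proximal term plays no role in this implication.

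Two clean-ups are needed. First, state the defect correctly; your sentence about $o(\cdot)\ge-\frac1q\|\cdot\|^q$ ``failing'' is garbled. Second, delete the local-to-global convexity paragraph. Its first inequality presupposes exactly the local minimality of $\bar u$ for \eqref{eq:cvx_ubar} that the defective step was meant to supply, and the direct argument makes it superfluous.
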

\begin{proof}
	This follows from \cref{lem:local_minimizers_minimize_cvx_ubar}
	and the aligned definitions of \eqref{eq:cvx_ubar} and $\calC$.
\end{proof}
\begin{definition}
We say that $u \in \feas$ is \emph{stationary} for \eqref{eq:r} if $u$ solves \eqref{eq:cvx_ubar},
that is, if and only if $\calC(u) = 0$.
\end{definition}
We note that if $u \in \BV(\Omega) \cap \feas$ is not stationary, then the $\Gamma$-convergence-type
result \cref{lem:ubar_eta_strictly} implies $\calC_\eta(u) \to \calC(u) > 0$ as $\eta \searrow 0$. The gist
of trust-region algorithms is to achieve a sufficient objective reduction in the vicinity of iterates $u$
with $\calC(u) > 0$. This is often guaranteed by means of so-called \emph{Cauchy} points, which usually
achieve a predicted reduction of a fraction of the criticality measure multiplied by the trust-region radius;
see, e.g., Lemma 4.3 in \cite{nocedal1999numerical}. In Hilbert space settings, Cauchy points are often steps
along the anti-gradient direction, projected onto the trust region (and possibly further convex constraints).
In convex, nonsmooth Hilbert space settings, an analog to the anti-gradient, that is, a direction of (steepest)
descent generally exists; see, e.g.,  Lemmas 2.75, 2.77 in \cite{ruszczynski2011nonlinear}. It can be
characterized as the projection of zero onto the subdifferential, which renders its explicit computation difficult,
thereby also impairing the application of standard linesearch methods.

It is a common approach to consider (convex) nonsmooths term exactly in trust-region subproblems
\cite{fletcher1982model,yuan1985conditions,hansknecht2024convergence,leyffer2022sequential,manns2023integer}.
This allows the trust-region subproblem to improve over the decrease along the direction of steepest descent to
obtain a locally uniform sufficient objective reduction that leads to global convergence; see (3.7)
in \cite{fletcher1982model} and Chapter 11 in \cite{conn2000trust} for finite-dimensional approaches that
assume a Lipschitz continuous nonsmoothness in a Euclidean space. In our case, we lack such a Lipschitz
property and, in addition, must work in $(\BV(\Omega), \text{weak-}^*)$ and $L^q(\Omega)$ regarding
the asymptotics of our algorithm to have sequential compactness at hand. This is a mismatch
to the topology used for our stationarity condition. To resolve this problem, we will make an algorithmic
compromise in that we reset the trust-region radius in successful iterations. While this may lead to slower
convergence in practice, it allows us to prove convergence to stationary points. We refer to 
\cite{manns2023integer,manns2025homotopy} for a different class of nonsmooth problems,
where the reset technique was applied successfully, and to \cite{manns2026convergence} for a case,
where the reset can be avoided by means of a technical argument in the presence of a $\TV$-term
that acts on integer-valued functions but the argument therein only works for $\Omega \subset \R$.

Our algorithm solves a sequence of subproblems with trust-region models $\tilde{m}_u$ that approximate
$m_u$ and $j$ near given $u$ that satisfy the following assumption.
\begin{assumption}\label{ass:trust-region_model}
For $u \in \feas$, we define $\tilde{m}_u : L^{q}(\Omega) \to \R \cup \{\infty\}$ as
$ \tilde{m}_u(v) \coloneqq q_u(v) + \alpha \TV(v) + \delta_{\feas}(v)$
for all $v \in \feas$, where $q_u : L^{q}(\Omega) \to \R$ is an approximation of $j$
near $u$ that satisfies
\begin{enumerate}
\item $q_u$ is twice differentiable.
\item $q_u(u) = j(u)$.
\item $\nabla_R q_u(u) = \nabla_R j(u)$.
\item\label{itm:regularity_quadratic_model}
 $\sup_{v \in \feas} \|q_u''(v)\|_{\calL(L^{q}, (L^{q})^*)} \le \kappa_{umh} - 1$
for some $\kappa_{umh} \ge 1$.
\end{enumerate}
\end{assumption}
\begin{remark}
The note that the assumptions on the model for the smooth part in \cref{ass:trust-region_model} coincide with
the (typical) assumptions AM.1-AM.4 from Chapter 6 in \cite{conn2000trust}. If $q_u$ is actually a
non-linear (typically quadratic) model, \cref{ass:trust-region_model} is non-trivial to satisfy.
E.g., if one wants to use the Hessian for \eqref{eq:p},
we need to increase $q$ to be as least as large as $\tilde{q}_*(q)$ from \cref{lem:exponent_condition}
to deduce the boundedness from
\cref{lem:S_second_derivative} and \cref{thm:second_order}.	
\end{remark}
The trust-region subproblem then reads for some $r \in [1,\infty)$:
\begin{gather}\label{eq:trp}
\begin{aligned}
\min_v\enskip
q_u(v) + \alpha \TV(v)
\enskip
\text{s.t.}\enskip
v \in \feas \text{ and } \|v - u\|_{L^{r}} \le \Delta.
\end{aligned}\tag{TR}
\end{gather}
\begin{lemma}
Let \cref{ass:trust-region_model} hold and $u \in \feas$. Then \eqref{eq:trp} admits a solution.
\end{lemma}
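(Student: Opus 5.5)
We use the direct method of the calculus of variations in the $L^1$/weak-$^*$ $\BV$ topology, where $\feas$ has compactness.

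\emph{Step 1: a bounded minimizing sequence.} The feasible set $\mathcal{F}_\Delta \coloneqq \{v \in \feas : \|v - u\|_{L^r} \le \Delta\}$ contains $u$. Since $u \in \feas \subset \BV(\Omega)$, the value at $u$ is finite, so the infimum is $< \infty$.

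For a lower bound, use a Taylor expansion of $q_u$ around $u$ along the segment to $v$; the segment stays in the convex set $\feas$. By \cref{ass:trust-region_model} this gives
\[
q_u(v) \ge j(u) + \int_\Omega \nabla_R j(u)(v-u) - \tfrac{\kappa_{umh}-1}{2}\|v-u\|_{L^q}^2 .
\]
The right-hand side is uniformly bounded below on $\feas$ because $\|v-u\|_{L^\infty} \le u_u - u_\ell$ and $\Omega$ is bounded. Hence the infimum is finite.

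Take a minimizing sequence $(v_n) \subset \mathcal{F}_\Delta$. The objective values are bounded above and $q_u$ is bounded below on $\feas$, so $\alpha\TV(v_n)$ is bounded. Together with $\|v_n\|_{L^\infty} \le u_u$, this bounds $(v_n)$ in $\BV(\Omega)$.

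\emph{Step 2: compactness.} By the compact embedding $\BV(\Omega) \hookrightarrow L^1(\Omega)$ on the bounded Lipschitz domain, a subsequence satisfies $v_n \to \bar v$ in $L^1$ and weakly-$^*$ in $\BV$. Passing to a further subsequence, $v_n \to \bar v$ a.e. The uniform $L^\infty$ bound then upgrades this to convergence in every $L^p$ with $p<\infty$, in particular in $L^q$ and $L^r$.

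The limit is feasible:
\begin{itemize}
\item the box constraints pass to the a.e. limit;
\item the trust-region constraint holds by norm continuity, $\|\bar v - u\|_{L^r} = \lim \|v_n - u\|_{L^r} \le \Delta$;
\item $\bar v \in \BV(\Omega)$ by lower semicontinuity of $\TV$.
\end{itemize}

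\emph{Step 3: lower semicontinuity of the objective.} $\TV$ is lower semicontinuous under $L^1$ convergence. For $q_u$, the key observation is that twice differentiability with bounded $q_u''$ makes $q_u$ continuous with respect to $\|\cdot\|_{L^q}$ on $\feas$. We use the mean value inequality
\[
|q_u(v_n) - q_u(\bar v)| \le \sup_{w \in [v_n,\bar v]} \|q_u'(w)\|_{(L^q)^*}\,\|v_n-\bar v\|_{L^q}.
\]
The supremum is uniformly bounded because $\|q_u'(w)\|_{(L^q)^*} \le \|\nabla_R j(u)\|_{L^{q'}} + (\kappa_{umh}-1)\|w-u\|_{L^q}$. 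This follows by integrating $q_u''$ along the segment from $u$ to $w$, all inside the convex set $\feas$.

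Hence $q_u(v_n) \to q_u(\bar v)$, and
\[
q_u(\bar v) + \alpha\TV(\bar v) \le \liminf_{n\to\infty} \big(q_u(v_n) + \alpha\TV(v_n)\big) = \inf,
\]
so $\bar v$ solves \eqref{eq:trp}.

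The main subtlety is Step 3: $q_u$ is only assumed differentiable, with second-derivative bounds on $\feas$, so its continuity has to be derived along segments inside $\feas$. This is also why we use strong $L^q$ convergence, obtained from the $L^\infty$ bounds, rather than weak convergence.
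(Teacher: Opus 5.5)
Your proposal is correct and is the direct-method argument the paper intends: the paper's proof only points back to the existence proof for \eqref{eq:r}, and you fill in the details that reference leaves implicit (a lower bound for $q_u$ on $\feas$ from the Taylor expansion with the bound $\kappa_{umh}-1$ on $q_u''$, $\BV$-compactness from the $\TV$ and box bounds, closedness of the box and $L^r$-ball constraints, and lower semicontinuity of $\TV$). As a minor simplification, the mean-value estimate in Step 3 is not needed, since differentiability of $q_u$ at $\bar v$ already gives $q_u(v_n)\to q_u(\bar v)$ whenever $v_n\to\bar v$ in $L^q(\Omega)$.
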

\begin{proof}
This follows similar as the existence of solutions to \eqref{eq:r}.
\end{proof}
We build the trust-region \cref{alg:trm} on the subproblem \eqref{eq:trp}. An iteration $k$ is
\emph{successful} if the sufficient reduction $\sigma_k \ge \underline{\sigma}$ is achieved
so that $u_{k+1}$ becomes the computed trial point $v_k$.
\begin{algorithm}[H]
\caption{Trust-region Algorithm}\label{alg:trm}
\textbf{Input:} $0 < \Delta_{\textrm{res}} = \Delta_0$, $u_0 \in \feas$,
$\underline{\sigma} \in (0,1)$, $\gamma \in (0,1)$.
\begin{algorithmic}[1]
	\For{$k = 0,1,\ldots$}
	\State $v_k \gets $ approximate solution to \eqref{eq:trp} with $\Delta = \Delta_k$ and $u = u_k$.\label{ln:approx_trstep}
	\State $\displaystyle{\sigma_k \gets \frac{j(u_k) + \alpha \TV(u_k) - j(v_k) - \alpha \TV(v_k)}{\tilde{m}_{u_k}(u_k) - \tilde{m}_{u_k}(v_k)}.}$\label{ln:ratio}
	\State $u_{k+1} \gets 
	\begin{cases}
	u_k & \text{ if }\sigma_k < \underline{\sigma}, \\
	v_k & \text{ if }\sigma_k \ge \underline{\sigma}.
	\end{cases}$\label{ln:acceptance}
	\State  $\Delta_{k+1} \gets
	\begin{cases}
	\gamma\Delta_k & \text{ if }\sigma_k < \underline{\sigma}, \\
	\Delta_{\textrm{res}}  & \text{ if }\sigma_k \ge \underline{\sigma}.
	\end{cases}$\label{ln:tr_update}
	\EndFor
\end{algorithmic}
\end{algorithm}
In Line \ref{ln:ratio}, we tacitly set $\sigma_k$ to zero if both numerator and
denominator are zero so that the iteration is unsuccessful. As a consequence,
the sequence of iterates may be eventually constant
if an iterate is stationary.
Regarding the approximation in Line \ref{ln:approx_trstep},
we assume the following.
\begin{assumption}\label{ass:approx_step}
There is $\eta \in (0,1]$ such that all $v_k$ computed in \cref{ln:approx_trstep} satisfy
$v_k \in \BV(\Omega) \cap \feas$ and
\begin{gather}\label{eq:pred_ratio_assumption}
\tilde{m}_{u_k}(u_k) - \tilde{m}_{u_k}(v_k) \ge \eta (\tilde{m}_k(u_k) - \inf \eqref{eq:trp}).
\end{gather}
\end{assumption}

To obtain a sufficient decrease property for the algorithm in our non-Lipschitzian, non-Hilbertian
setting, we observe that if $\calC(u) > 0$ and thus reduction of the objective is possible,
then there must exist a direction that, together with a corresponding element of the subdifferential,
realizes decrease; see Lemma 17.4.3 in \cite{attouch2014variational}. Together with the monotonicity of 
the convex subdifferential, this gives decrease along a line segment, which we will use as a
substitute of the steepest descent direction and Cauchy points. We assert this in \cref{lem:decrease}
below. 
\begin{lemma}\label{lem:decrease}
Let $u_\infty \in \BV(\Omega) \cap \feas$ satisfy $\calC(u_\infty) > 0$. Then there exist
$\varepsilon > 0$ and $\tilde{u} \in \BV(\Omega)
\cap \feas$ such that for all $s \in (0,1)$
it holds that
\[ m_{u_\infty}(u_\infty) - m_{u_\infty}(u_\infty + s(\tilde{u} - u_\infty)) \ge s \varepsilon.
\]
\end{lemma}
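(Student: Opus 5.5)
The natural choice is to take $\tilde u$ to be a minimizer of $m_{u_\infty}$ over $\feas$, i.e., a solution of the convex problem defining $\calC(u_\infty)$. Convexity along the segment then does the rest: the linear and $\TV$ parts behave linearly or convexly, and the $L^q$-power term is of higher order in $s$. The monotonicity hint preceding the lemma is a one-dimensional form of this convexity argument.

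\emph{Step 1: existence of $\tilde u$.} Existence of a minimizer of $m_{u_\infty}$ on $\feas$ follows from the direct method, as for \eqref{eq:r}. A minimizing sequence is bounded in $\BV(\Omega)$ because of the box constraints and the $\TV$-term, so it has a subsequence converging in $L^1(\Omega)$. Since the sequence is uniformly bounded in $L^\infty(\Omega)$, it also converges in $L^q(\Omega)$. The linear term $\int_\Omega \nabla_R j(u_\infty)(v-u_\infty)$ is continuous on $L^q(\Omega)$ because $\nabla_R j(u_\infty)\in L^{q'}(\Omega)$, and so is $\frac1q\|v-u_\infty\|_{L^q}^q$. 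The $\TV$-term is lower semicontinuous, and $\feas$ is closed. Let $\tilde u\in\BV(\Omega)\cap\feas$ be such a minimizer. By definition of $\calC$,
\[ m_{u_\infty}(u_\infty)-m_{u_\infty}(\tilde u)=\calC(u_\infty)>0 .\]
Here $m_{u_\infty}(u_\infty)=\alpha\TV(u_\infty)$, which is consistent with the definition of $\calC$.

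\emph{Step 2: decrease along the segment.} Set $w\coloneqq\tilde u-u_\infty$ and $u_s\coloneqq u_\infty+sw$, which lies in $\feas$ by convexity. Split $m_{u_\infty}(u_s)-m_{u_\infty}(u_\infty)$ into three terms:
\begin{itemize}
\item[(a)] the linear term, which equals $s\int_\Omega\nabla_R j(u_\infty)w$;
\item[(b)] the power term, which equals $\frac{s^q}{q}\|w\|_{L^q}^q\le \frac{s}{q}\|w\|_{L^q}^q$ for $s\in(0,1)$, since $q>1$;
\item[(c)] the $\TV$ term, where convexity gives $\alpha\TV(u_s)-\alpha\TV(u_\infty)\le s\alpha(\TV(\tilde u)-\TV(u_\infty))$.
\end{itemize}
Combining the three yields
\[ m_{u_\infty}(u_\infty)-m_{u_\infty}(u_s)\ge s\Big(-\int_\Omega\nabla_R j(u_\infty)w-\alpha\TV(\tilde u)+\alpha\TV(u_\infty)\Big)-\frac{s^q}{q}\|w\|_{L^q}^q .\]
The bracket equals $\calC(u_\infty)+\frac1q\|w\|_{L^q}^q$. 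Using $s^q\le s$, the right-hand side is at least
\[ s\,\calC(u_\infty)+\frac{s-s^q}{q}\|w\|_{L^q}^q\ge s\,\calC(u_\infty),\]
so the claim holds with $\varepsilon\coloneqq\calC(u_\infty)$.

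\emph{Main obstacle.} The only delicate point is Step 1: attainment of the minimum and finiteness of $\TV(\tilde u)$, which is needed so that $\tilde u\in\BV(\Omega)$. This is settled by the compactness argument above together with the fact that $m_{u_\infty}(u_\infty)$ is finite because $u_\infty\in\BV(\Omega)$. If one prefers to avoid attainment, a near-minimizer $\tilde u$ with $m_{u_\infty}(u_\infty)-m_{u_\infty}(\tilde u)\ge\frac12\calC(u_\infty)$ works equally well and gives $\varepsilon=\frac12\calC(u_\infty)$. Everything else is elementary convexity.
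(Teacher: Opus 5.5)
Your proof is correct, but it takes a different and more elementary route than the paper.

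\textbf{The paper's route.} It invokes Lemma 17.4.3 of Attouch et al.\ to obtain $\tilde u$ together with a subgradient $\tilde u^*\in\partial(m_{u_\infty}+\delta_{\feas})(\tilde u)$ satisfying $\langle\tilde u^*,u_\infty-\tilde u\rangle\ge\varepsilon$. It then carries this bound along the segment using the subdifferential inequality at $u^s$ and the maximal monotonicity of the subdifferential. This requires the sum rule in $\BV(\Omega)\cap L^q(\Omega)$ to guarantee $\partial(m_{u_\infty}+\delta_{\feas})(u^s)\neq\emptyset$.

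\textbf{Your route.} You bypass all subdifferential calculus. Your Step 2 is just the convexity inequality $m_{u_\infty}(u_s)\le(1-s)\,m_{u_\infty}(u_\infty)+s\,m_{u_\infty}(\tilde u)$ for the convex functional $m_{u_\infty}$ on the convex set $\feas$. The three-term split is not even needed, since the power term is itself convex. The argument works for any $\tilde u\in\feas$ with $m_{u_\infty}(\tilde u)<m_{u_\infty}(u_\infty)$, so the attainment argument in Step 1 is optional, as you note with the near-minimizer fallback.

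\textbf{What each buys.} Your approach gives an explicit constant $\varepsilon=\calC(u_\infty)$ (or $\tfrac12\calC(u_\infty)$) tied directly to the criticality measure, and it does not depend on external results. The paper's approach additionally produces a dual certificate $\tilde u^*$. However, the subsequent trust-region lemmas use only $\tilde u$ and $\varepsilon$, so that extra structure is not exploited.
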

\begin{proof}
We abbreviate $V \coloneqq \BV(\Omega) \cap L^q(\Omega)$.
Since $\calC(u_\infty) > 0$ holds, $u_\infty$ does not minimize the proper, convex, and
lower semicontinuous functional $m_{u_\infty} + \delta_{\feas} : V \to \R \cup \{\infty\}$.
We thus apply  Lemma 17.4.3 in \cite{attouch2014variational} to deduce that there are
$\varepsilon > 0$, $\tilde{u} \in V \cap \feas$, and
\[ 
\tilde{u}^* \in 
   		\{\nabla_R j(u_\infty)\} 
   		+ \Big\{ (\tilde{u} - u_\infty)\frac{|\tilde{u} - u_\infty|^{q - 2}}{\|\tilde{u} - u_\infty\|^{q - 2}_{L^{q}}}\Big\}
  + \partial\big(\alpha \TV + \delta_{\feas}\big)(\tilde{u})
\]
such that $m_{u_\infty}(\tilde{u}) < m_{u_\infty}(u_\infty)$,
$\langle \tilde{u}^*, u_\infty - \tilde{u}\rangle_{V^*,V} \ge \varepsilon$.
	
Let $u^s = \tilde{u} + (1 - s)(u_\infty - \tilde{u}) = u_\infty + s(\tilde{u} - u_\infty)$ for
some $s \in (0,1)$. Because $u_\infty$, $\tilde{u} \in \feas$, it holds $u^s \in \feas$ for
$s \in (0,1)$, implying $0 \in \partial \delta_{\feas}(u^s)$. 
In addition, the $\TV$-term is bounded in a $V$-neighborhood of $u^s$ so that
$\emptyset \neq \partial\TV(u^s)$. In combination with the differentiability of the first two terms in
$m_{u_\infty}$, the sum rule for convex subdifferentials gives $\emptyset \neq \partial (m_{u_\infty} + \delta_{\feas})(u^s)$.
For every $u^{s,*} \in \partial(m_{u_\infty} + \delta_{\feas})(u^s)$, we deduce
\begin{align*}
m_{u_\infty}(u_\infty) - m_{u_\infty}(u^s)
\ge \langle u^{s,*}, u_\infty - u^s\rangle_{V^*,V} &= 
\langle u^{s,*}, s(u_\infty - \tilde{u}) \rangle_{V^*,V}\\
&= s(1 - s)^{-1} \langle u^{s,*}, u^s - \tilde{u} \rangle_{V^*,V}\\
&\ge s(1 - s)^{-1} \langle \tilde{u}^*, u^s - \tilde{u} \rangle_{V^*,V}\\
&= \langle \tilde{u}^*, s(u_\infty - \tilde{u}) \rangle_{V^*,V} > s\varepsilon,
\end{align*}
where the first inequality is the subdifferential inequality and the second is due to the maximal
monotonicity of the subdifferential; see Theorem 17.4.1 in \cite{attouch2014variational}.
\end{proof}
This decrease transfers to the approximate model and 
in turn allows to show that the reduction is
bounded below by a multiple of the trust-region.
\begin{lemma}\label{lem:reduction_bounded_below}
Let \cref{ass:j,ass:trust-region_model,ass:approx_step} hold with $\kappa_{umh} \ge 1$ uniformly on a subset $\calF \subset \feas$. Let $u_k \in \BV(\Omega) \cap \feas$ satisfy $\calC(u_k) > 0$.
Then, there exist $\varepsilon_1 > 0$ and
$\overline{\Delta}$
such that 
\[ \tilde{m}_{u_k}(u_k) - \tilde{m}_{u_k}(v_k)
\ge \varepsilon_1 \Delta. 
\]
\end{lemma}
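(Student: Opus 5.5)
The plan is to combine \cref{lem:decrease} with a comparison between the model $\tilde m_{u_k}$ and the convex functional $m_{u_k}$ along the line segment it provides, and then to pass from the exact minimizer of \eqref{eq:trp} to the approximate step via \cref{ass:approx_step}. The statement should be read as: for all $\Delta \in (0,\overline{\Delta}]$ (with $\Delta = \Delta_k$), the reduction is at least $\varepsilon_1\Delta$.

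\textbf{Step 1 (descent segment).} Apply \cref{lem:decrease} with $u_\infty = u_k$. This yields $\varepsilon>0$ and $\tilde u\in\BV(\Omega)\cap\feas$ such that
\[
m_{u_k}(u_k)-m_{u_k}(u^s)\ge s\varepsilon, \qquad u^s\coloneqq u_k+s(\tilde u-u_k),\quad s\in(0,1).
\]
Let $D\coloneqq\|\tilde u-u_k\|_{L^r}$. If $D=0$ there is nothing to show, since then $m_{u_k}(u^s)=m_{u_k}(u_k)$, contradicting the decrease. Choose
\[
s=s(\Delta)\coloneqq\min\{\Delta/D,\tfrac12\}.
\]
Then $u^s$ is feasible for \eqref{eq:trp} because $\feas$ is convex and $\|u^s-u_k\|_{L^r}=sD\le\Delta$.

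\textbf{Step 2 (model vs.\ $m_{u_k}$).} The goal is to bound $\tilde m_{u_k}(u_k)-\tilde m_{u_k}(u^s)$ from below. The $\TV$ parts of $\tilde m_{u_k}$ and of $m_{u_k}$ coincide, and both smooth parts agree to first order at $u_k$ (\cref{ass:trust-region_model}, items 2--3). By Taylor's theorem with the Hessian bound of item \ref{itm:regularity_quadratic_model},
\[
q_{u_k}(u^s)\le j(u_k)+s\int_\Omega\nabla_R j(u_k)(\tilde u-u_k)+\tfrac12(\kappa_{umh}-1)s^2\|\tilde u-u_k\|_{L^q}^2 .
\]
The linear part of $m_{u_k}$ matches this expansion, and its extra term is $\frac{s^q}{q}\|\tilde u-u_k\|_{L^q}^q\ge0$. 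Combining with Step 1 gives
\[
\tilde m_{u_k}(u_k)-\tilde m_{u_k}(u^s)\ge s\varepsilon-\tfrac12\kappa_{umh}s^2 M^2,
\]
where $M\coloneqq\|\tilde u-u_k\|_{L^q}\le|\Omega|^{1/q}(u_u-u_\ell)$ is bounded thanks to the box constraints.

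\textbf{Step 3 (choice of $\overline{\Delta}$).} Restrict to $\Delta\le\overline{\Delta}\coloneqq D\min\{\tfrac12,\varepsilon/(\kappa_{umh}M^2)\}$. Then $s=\Delta/D$ and $\tfrac12\kappa_{umh}sM^2\le\varepsilon/2$, so the reduction is at least $s\varepsilon/2=\Delta\,\varepsilon/(2D)$. Because $u^s$ is feasible for \eqref{eq:trp}, $\tilde m_{u_k}(u_k)-\inf\eqref{eq:trp}\ge\Delta\varepsilon/(2D)$. \Cref{ass:approx_step} then gives the claim with $\varepsilon_1\coloneqq\eta\varepsilon/(2D)$; since $D\le|\Omega|^{1/r}(u_u-u_\ell)$, one may also replace $D$ by this uniform bound.

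\textbf{Main obstacle.} The comparison in Step 2 is delicate because $q_{u_k}$ may differ from $j$ at second order and the Hessian bound is formulated with respect to $L^q$ rather than $L^r$. This is harmless here only because $L^\infty$-boundedness on $\feas$ controls both norms. The price is that $\varepsilon$, $\tilde u$, and hence $\varepsilon_1$ and $\overline{\Delta}$ depend on $u_k$. For later use in the convergence proof, one would need local uniformity near a non-stationary limit $u_\infty$. That would come from applying \cref{lem:decrease} at $u_\infty$ and using the Lipschitz continuity of $\nabla_R j$ (\cref{ass:j}) to transfer the decrease to nearby $u_k$; this transfer is the step I expect to require the most care.
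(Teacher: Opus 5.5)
Your proof is correct and follows essentially the paper's argument: you apply \cref{lem:decrease} at $u_k$, control the model discrepancy along the segment with the Taylor/Hessian bound, and take the trust-region boundary point $u_k + (\Delta/\|\tilde u - u_k\|_{L^r})(\tilde u - u_k)$ for $\Delta \le \overline{\Delta}$. Invoking \cref{ass:approx_step} then gives the same constant $\varepsilon_1 = \eta\varepsilon/(2\|\tilde u - u_k\|_{L^r})$. The differences are cosmetic: you use a one-sided estimate that drops the $\frac{s^q}{q}\|\tilde u - u_k\|_{L^q}^q$ term by its favorable sign and give $\overline{\Delta}$ explicitly, whereas the paper bounds the two-sided discrepancy $d_{m,\tilde m}$ including that term and chooses $\overline{s}$ using $q>2$.
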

\begin{proof}
We abbreviate $u \coloneqq u_k$, $\sigma = \sigma_k$. By virtue of \cref{lem:decrease},
there are $\varepsilon$ and $\tilde{u}$ for the choice $u_\infty = u$ therein. We first analyze
the difference of the model decreases from \cref{lem:decrease} to the model decrease in
the denominator of $\sigma_k$ by means of a Taylor expansion of $q_u$:
\begin{align*}
d_{m,\tilde{m}} &\coloneqq |m_u(u) - m_u(u + s(\tilde{u} - u))
- \tilde{m}_u(u) + \tilde{m}_u(u + s(\tilde{u} - u))|
\\
&= \Big|- \langle j'(u), s(\tilde{u} - u)\rangle_{(L^{q})^*,L^{q}} - q^{-1}\|s(\tilde{u} - u)\|_{L^{q}}^{q}\\
&\quad + \alpha \TV(u)
- \alpha \TV(u + s(\tilde{u} - u))\\
&\quad - q_u(u) - \alpha \TV(u)
+ q_u(u + s(\tilde{u} - u))
+ \alpha \TV(u + s(\tilde{u} - u))\Big|\\
&\le 
\big|
s \langle j'(u), u - \tilde{u} \rangle_{(L^q)^*,L^q} - q_u(u) + q_u(u + s(\tilde{u} - u)) \big|
+ q^{-1}\|s(\tilde{u} - u)\|_{L^q}^{q}\\
&\le  0.5 s^2 \big|\langle q_u''(\xi_u(s)) (\tilde{u} - u), \tilde{u} - u\rangle_{(L^{q})^*,L^{q}}\big|
+ q^{-1}s^q\|\tilde{u} - u\|_{L^{q}}^{q}  \\
&\le 0.5s^2(\kappa_{umh} - 1)\|u - \tilde{u}\|^2_{L^q} + q^{-1}s^q\|\tilde{u} - u\|_{L^{q}}^{q}
\end{align*}
where $\xi_u(s)$ lies in the line segment between $u$ and $u + s(\tilde{u} - u)$.
Since $q > 2$, we can choose $\overline{s} \in (0,1]$ small enough such that $d_{m,\tilde{m}} \le \frac{1}{2} s \varepsilon$
holds for all $s \in (0,\overline{s}]$.

Let $\Delta \le \overline{s}\|u - \tilde{u}\|_{L^r} \eqqcolon \overline{\Delta}$,
we obtain $\|u - (u + \Delta\|u - \tilde{u}\|_{L^r}^{-1})(\tilde{u} - u)\|_{L^r} = \Delta$ so
that $u + \Delta\|u - \tilde{u}\|_{L^r}^{-1}(\tilde{u} - u)$ is feasible for
\eqref{eq:trp}. Combining all of our considerations
, we obtain for $v_k$ 
\begin{align*}
\tilde{m}_u(u) - \tilde{m}_u(v)
&\ge \eta(\tilde{m}_u(u) - \inf \eqref{eq:trp} \\
&\ge \eta\big(\tilde{m}_u(u) - \tilde{m}_u(u
+ \Delta\|u - \tilde{u}\|_{L^r}^{-1}(\tilde{u} - u))\big) \\
&\ge \frac{1}{2}\eta\varepsilon\|u - \tilde{u}\|_{L^r}^{-1} \Delta.
\end{align*}
\end{proof}
This implies that a step is eventually accepted
once the trust-region radius is small enough.
\begin{lemma}\label{lem:eventual_step_acceptance}
Let \cref{ass:j,ass:trust-region_model,ass:approx_step} hold with $\kappa_{umh} \ge 1$ uniformly on a subset $\calF \subset \feas$. Let $r > 0.5q$.
Let $j' : L^q(\Omega) \to (L^q(\Omega))^*$ be
continuously differentiable on $\feas$ with
bounded derivative on $\feas$.
Let $u_k \in \BV(\Omega) \cap \feas$ satisfy $\calC(u_k) > 0$.
Then, there exists $\ell \in \N$ such that $\sigma_{k + \ell} \ge \underline{\sigma}$.
\end{lemma}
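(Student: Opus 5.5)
We argue by contradiction. Suppose every iteration $k+\ell$, $\ell \in \N$, is unsuccessful. Then by Line~\ref{ln:acceptance} and Line~\ref{ln:tr_update} of \cref{alg:trm} the iterate stays fixed, $u_{k+\ell} = u_k \eqqcolon u$, and the radius shrinks geometrically, $\Delta_{k+\ell} = \gamma^\ell \Delta_k \to 0$. In particular $\calC(u) > 0$ for all these iterations. Once $\Delta_{k+\ell} \le \overline{\Delta}$, \cref{lem:reduction_bounded_below} gives the predicted reduction bound
\[
\tilde{m}_u(u) - \tilde{m}_u(v_{k+\ell}) \ge \varepsilon_1 \Delta_{k+\ell},
\]
with $\varepsilon_1 > 0$ and $\overline{\Delta}$ independent of $\ell$, because $u$ is fixed. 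The goal is to show $|\sigma_{k+\ell} - 1| \to 0$, which contradicts $\sigma_{k+\ell} < \underline{\sigma} < 1$.

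\textbf{Step 1: express the ratio through smooth terms.} Write $v = v_{k+\ell}$ and $\Delta = \Delta_{k+\ell}$. Since both the actual and the predicted reduction contain the identical term $\alpha(\TV(u) - \TV(v))$, and $q_u(u) = j(u)$,
\[
|\sigma_{k+\ell} - 1| = \frac{|j(v) - q_u(v)|}{\tilde{m}_u(u) - \tilde{m}_u(v)}.
\]
This cancellation is essential: the $\TV$-term is not controlled in $L^q$ and must not appear in the estimate. It therefore suffices to show $|j(v) - q_u(v)| = o(\Delta)$.

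\textbf{Step 2: second-order Taylor bounds for $j$ and $q_u$ along the segment.} Both $v$ and $u$ lie in $\feas$, and $\feas$ is convex, so the whole segment between them is feasible. The assumed continuous differentiability of $j'$ with bounded derivative on $\feas$, together with \cref{ass:trust-region_model}(\ref{itm:regularity_quadratic_model}), gives
\[
|j(v) - j(u) - \langle j'(u), v-u\rangle| \le C\|v-u\|_{L^q}^2,
\qquad
|q_u(v) - q_u(u) - \langle j'(u), v-u\rangle| \le C\|v-u\|_{L^q}^2.
\]
Here we use $q_u(u) = j(u)$ and $\nabla_R q_u(u) = \nabla_R j(u)$. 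Subtracting the two bounds yields $|j(v) - q_u(v)| \le C\|v-u\|_{L^q}^2$.

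\textbf{Step 3 (main obstacle): convert the $L^q$ distance into the $L^r$ trust-region radius.} The trust region only controls $\|v-u\|_{L^r} \le \Delta$, and $r$ may be smaller than $q$. I will use the uniform $L^\infty$ bound coming from the box constraints, $\|v-u\|_{L^\infty} \le u_u - u_\ell$. When $r < q$ this gives the interpolation estimate
\[
\|v-u\|_{L^q}^q \le (u_u-u_\ell)^{q-r}\|v-u\|_{L^r}^r,
\qquad\text{so}\qquad
\|v-u\|_{L^q}^2 \le C\Delta^{2r/q}.
\]
When $r \ge q$, the bound $\|v-u\|_{L^q} \le C\Delta$ follows directly from the embedding $L^r \hookrightarrow L^q$ on the bounded domain, hence $\|v-u\|_{L^q}^2 \le C\Delta^2$. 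The hypothesis $r > 0.5q$ is exactly the condition $2r/q > 1$, so in both cases $|j(v) - q_u(v)| \le C\Delta^{\min\{2r/q,\,2\}}$ with exponent strictly larger than $1$.

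\textbf{Conclusion.} Combining Step 1, Step 2, Step 3 and \cref{lem:reduction_bounded_below},
\[
|\sigma_{k+\ell} - 1| \le \frac{C\Delta^{\min\{2r/q,\,2\}}}{\varepsilon_1 \Delta} \to 0
\quad\text{as } \ell \to \infty.
\]
Hence $\sigma_{k+\ell} \ge \underline{\sigma}$ for $\ell$ large enough, which contradicts the assumption that all iterations $k+\ell$ are unsuccessful.
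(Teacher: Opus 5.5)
Your proposal is correct and follows essentially the same route as the paper. The $\TV$-terms cancel, so actual and predicted reduction differ only by $|q_u(v)-j(v)|\le C\|v-u\|_{L^q}^2$ via second-order Taylor expansions, and the $L^\infty$ box bound gives $\|v-u\|_{L^q}^2\le C\Delta^{2r/q}=o(\Delta)$ because $r>0.5q$. Your explicit use of the linear lower bound $\varepsilon_1\Delta$ from \cref{lem:reduction_bounded_below} is an improvement: its constants are independent of $\ell$ because the iterate stays fixed at $u_k$, and this bound is what actually forces $\sigma_{k+\ell}\to 1$, whereas the paper's proof only sketches this step by appealing to positivity of the denominator.
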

\begin{proof}
We perform a Taylor expansion of $q_u$ and $j$ at $u$ to obtain with
\begin{gather}\label{eq:pred_vs_ared}
\begin{aligned}
\hspace{1em}&\hspace{-1em}
|\tilde{m}_{u_k}(u_k) - \tilde{m}_{u_k}(v_k) - j(u_k) - \alpha \TV(u_k) + j(v_k) + \alpha \TV(v_k)| \\
&= |q_{u_k}(v_k) - j(v_k)| \le 
0.5\|q_{u_k}''(\xi_q)\|_{\mathcal{L}(L^q,(L^{q})^*)}
\|d_k\|_{L^{q}}^2 
+ 
0.5\|j''(\xi_j)\|_{\mathcal{L}(L^{q},(L^q)^*)}
\|d_k\|_{L^{q}}^2,
\end{aligned}
\end{gather}
with $d_k \coloneqq u_k - v_k$
for some $\xi_q$ and $\xi_j$ in the line segments 
between $u_k$ and $v_k$.
By means of \cref{ass:trust-region_model} and the
assumption on $j''$ we obtain
$ \sup_{u \in \calF} \|\nabla^2 q_u(\xi_q) \|_{\mathcal{L}(L^{q},L^{q'})} + \|\nabla^2 j(\xi_j)\|_{\mathcal{L}(L^{q},L^{q'})} < \infty. $ 
Furthermore, we can deduce $\|d_k\|_{L^{q}} \le c \|d_k\|_{L^r}$ for $r \ge q$. For $q > r > 0.5 q$,
we obtain
\[ \|d_k\|_{L^{q}}^2 \le \|d_k\|_{L^\infty}^{\frac{2(q - r)}{q}} \|d_k\|_{L^r}^{\frac{2r}{q}}
\le (u_u - u_\ell)^{\frac{2(q - r)}{q}} \Delta^{\frac{2r}{q}}
= o(\Delta). \]
This implies that the difference between
numerator and denominator in $\sigma_{k+\ell}$
tends to zero as $\ell \to \infty$ (until
$\sigma_{k + \ell} \ge \underline{\sigma}$
holds).
Using \cref{lem:decrease},
we obtain that the denominator is always positive
and, in combination with the $o(\Delta)$-approximation
above, the numerator is eventually positive 
so that $\sigma_{k + \ell} \to 1$. This implies
that $\sigma_{k + \ell} \ge \underline{\sigma}$
holds eventually.
\end{proof}
Next, we establish auxiliary results that will allow us to prove that convergence to a stationary point 
may imply a decrease in the algorithm that is bounded
below. To this end, we first estimate the decrease
in $m_{u_\infty}$ that we can bound below by 
\cref{lem:decrease}
if $C(u_\infty) > 0$ for some $u_\infty$
against the decrease in the trust-region model
$\tilde{m}_u$
from $u$ to $u_\infty + s(\tilde{u} - u_\infty)$.
\begin{lemma}\label{lem:mq_mtildeu_approx}
\cref{ass:j,ass:trust-region_model,ass:approx_step} hold with $\kappa_{umh} \ge 1$ uniformly on a subset $\calF \subset \feas$.
Let $j' : L^q(\Omega) \to (L^q(\Omega))^*$ be
continuously differentiable on $\feas$ with
bounded derivative on $\feas$.
Let $u_\infty \in \BV(\Omega) \cap \feas$
satisfy $C(u_\infty) > 0$.
Then, for the $\tilde{u} \in \BV(\Omega) \cap \feas$
asserted in \cref{lem:decrease} and for all $s \in (0,1)$, we obtain
\[ d_{m,\tilde{m}}
   \le c\max\{
   s^2,
   s^q,
   s\|j'(u_\infty) - j'(u)\|_{(L^q)^*},
   \|u_\infty - u\|_{L^q}^2,
   |\TV(u_\infty) - \TV(u)|
   \}
\]
for $d_{m,\tilde{m}} \coloneqq 
|m_{u_\infty}(u_\infty) - m_{u_\infty}(u_\infty + s(\tilde{u} - u_\infty))
- \tilde{m}_u(u) + \tilde{m}_u(u_\infty + s(\tilde{u} - u_\infty))|$ and some $c > 0$.
\end{lemma}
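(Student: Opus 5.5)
The plan is to expand both model differences with their definitions, cancel whatever cancels exactly, and bound the remainder term by term by a Taylor expansion of $q_u$ around $u$. Write $w_s \coloneqq u_\infty + s(\tilde{u} - u_\infty) \in \feas$, which is feasible by convexity of $\feas$. The definition of $m_{u_\infty}$ gives $m_{u_\infty}(u_\infty) = \alpha\TV(u_\infty)$ and
\[
m_{u_\infty}(w_s) = s\langle j'(u_\infty), \tilde{u} - u_\infty\rangle + q^{-1}s^q\|\tilde{u} - u_\infty\|_{L^q}^q + \alpha\TV(w_s).
\]
By \cref{ass:trust-region_model} we have $\tilde{m}_u(u) = j(u) + \alpha\TV(u)$ and $\tilde{m}_u(w_s) = q_u(w_s) + \alpha\TV(w_s)$. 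The terms $\alpha\TV(w_s)$ cancel exactly, which is the reason for comparing the two models at the same point $w_s$. The remaining difference is
\[
\alpha\bigl(\TV(u_\infty) - \TV(u)\bigr) - q^{-1}s^q\|\tilde{u} - u_\infty\|_{L^q}^q + \bigl[q_u(w_s) - q_u(u) - s\langle j'(u_\infty), \tilde{u} - u_\infty\rangle\bigr].
\]
The first two summands are bounded by $c|\TV(u_\infty) - \TV(u)|$ and $c s^q$. Here $\|\tilde{u} - u_\infty\|_{L^q} \le (u_u - u_\ell)\lambda(\Omega)^{1/q}$ because both functions lie in $\feas$.

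For the bracket I will use the second-order Taylor expansion of $q_u$ at $u$ with a mean-value point $\xi$ on the segment $[u, w_s] \subset \feas$, together with $\nabla_R q_u(u) = \nabla_R j(u)$. Splitting $w_s - u = (u_\infty - u) + s(\tilde{u} - u_\infty)$, the bracket equals
\[
\langle j'(u), u_\infty - u\rangle + s\langle j'(u) - j'(u_\infty), \tilde{u} - u_\infty\rangle + \tfrac12\langle q_u''(\xi)(w_s - u), w_s - u\rangle .
\]
The middle term is bounded by $c\,s\|j'(u_\infty) - j'(u)\|_{(L^q)^*}$. The quadratic term is bounded, via \cref{ass:trust-region_model}\,\ref{itm:regularity_quadratic_model} with $\kappa_{umh}$ uniform on $\calF$ and $(a+b)^2 \le 2a^2 + 2b^2$, by $(\kappa_{umh} - 1)\bigl(\|u_\infty - u\|_{L^q}^2 + s^2\|\tilde{u} - u_\infty\|_{L^q}^2\bigr)$. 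A sum of five such terms is at most five times their maximum, and this produces the constant $c$.

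The step I expect to be the real obstacle is the leftover first-order term $\langle j'(u), u_\infty - u\rangle$. The statement contains no summand that is linear in $\|u_\infty - u\|_{L^q}$, so this term cannot be absorbed by $\|u_\infty - u\|_{L^q}^2$ for $u$ close to $u_\infty$. It also cannot be reached from the other side by expanding around $u_\infty$ instead: there the difference $q_u(u_\infty) - q_u(u)$ still carries $\langle j'(u), u_\infty - u\rangle$ to first order. Since nothing else in the difference is available to cancel it, I do not expect it to vanish by rearrangement.

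My first attempt is therefore to check whether the term vanishes or is quadratically small for the specific $\tilde{u}$ from \cref{lem:decrease} and the pairs $(u, u_\infty)$ to which the lemma is applied. If it is not, the honest outcome of the plan is the bound above with the additional entry $\|u_\infty - u\|_{L^q}$ in the maximum, with constant $\sup_{\feas}\|j'\|_{(L^q)^*}$, which is finite by the assumed boundedness of $j'$ on $\feas$. This additional entry still tends to zero as $u \to u_\infty$ in $L^q(\Omega)$, which is all that the subsequent convergence argument needs.
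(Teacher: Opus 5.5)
Your route is the paper's route: you expand both differences, cancel $\alpha\TV(u_\infty + s(\tilde u - u_\infty))$, Taylor-expand $q_u$ at $u$ using $\nabla_R q_u(u) = \nabla_R j(u)$ after splitting $w_s - u = (u_\infty - u) + s(\tilde u - u_\infty)$, and bound the pieces one by one. You also have the correct power $s^q$ where the paper's proof writes $q^{-1}s\|\tilde u - u_\infty\|_{L^q}^q$. Your diagnosis of the leftover term is correct, and it is a defect of the statement rather than of your plan. The last line of the paper's own proof contains the summand $c_2\|u_\infty - u\|_{L^q}$, coming from $\langle j'(u), u - u_\infty\rangle$. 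This summand is not dominated by the maximum in the statement. So the paper's argument proves exactly your amended bound, with the extra entry $\|u_\infty - u\|_{L^q}$ and $c_2 = \sup_{\feas}\|j'\|_{(L^q)^*}$.

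You can skip the attempt to show that this term vanishes for the particular $\tilde u$, since it does not involve $\tilde u$ at all. Moreover, with $c$ independent of $u$ the statement as written is false. A $u$-independent $c$ is how the lemma is used in \cref{thm:asymptotics}, where $c$ enters $s_*$ before $\ell_0$ is fixed. For a counterexample, take $j(v) = \int_\Omega v\,dx$ and $q_u \coloneqq j$, so $\kappa_{umh} = 1$ on all of $\feas$. Take $u_\infty \equiv \frac12(u_\ell + u_u)$; then $\calC(u_\infty) > 0$, because the constant $u_\infty - \tau$ gives a smaller value of $m_{u_\infty}$ for small $\tau > 0$. Finally take $u \coloneqq u_\infty + t$ with small $t > 0$. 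Since $j'$ is constant and $\TV$ is invariant under constant shifts, one computes $d_{m,\tilde m} = t\lambda(\Omega) + q^{-1}s^q\|\tilde u - u_\infty\|_{L^q}^q$. For $s \le t$, however, the maximum is at most $t^2\max\{1,\lambda(\Omega)^{2/q}\}$, so no fixed $c$ works as $t \searrow 0$.

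Your amended version suffices downstream. In \cref{thm:asymptotics} one has $u = u_{k_\ell} \to u_\infty$ in $L^q(\Omega)$, so when choosing $\ell_0$ one only has to add $\|u_{k_\ell} - u_\infty\|_{L^q} \le 0.5\varepsilon s^*/c$ to the requirements in \eqref{eq:ukl_uinfty_close}. Alternatively, the coupling $\|u_{k_\ell} - u_\infty\|_{L^r} \le (s^*)^2$ already imposed there gives $\|u_{k_\ell} - u_\infty\|_{L^q} = O\bigl((s^*)^{2r/q}\bigr) = o(s^*)$ for $r > 0.5q$, by interpolation with the $L^\infty$-bound on $\feas$.
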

\begin{proof}
We deduce
\begin{align*}
d_{m,\tilde{m}} &= |m_{u_\infty}(u_\infty) - m_{u_\infty}(u_\infty + s(\tilde{u} - u_\infty))
- \tilde{m}_u(u) + \tilde{m}_u(u_\infty + s(\tilde{u} - u_\infty))|
\\
&= \Big|- \langle j'(u_\infty), s(\tilde{u} - u_\infty)\rangle_{(L^{q})^*,L^{q}} - q^{-1}\|s(\tilde{u} - u_\infty)\|_{L^{q}}^{q}\\
&\quad + \alpha \TV(u_\infty)
- \alpha \TV(u_\infty + s(\tilde{u} - u_\infty))\\
&\quad - q_u(u) - \alpha \TV(u)
+ q_u(u_\infty + s(\tilde{u} - u_\infty))
+ \alpha \TV(u_\infty + s(\tilde{u} - u_\infty))\Big|\\
&\le 
\big|
s \langle j'(u_\infty), u_\infty - \tilde{u} \rangle_{(L^q)^*,L^q} - q_u(u) + q_u(u_\infty + s(\tilde{u} - u_\infty)) \big|\\
&\quad + q^{-1}s\|\tilde{u} - u_\infty\|_{L^q}^{q}
+ \alpha|\TV(u_\infty) - \TV(u)| \\
&\le 
\big|
- s \langle j'(u_\infty), \tilde{u} - u_\infty  \rangle_{(L^q)^*,L^q} + 
s \langle j'(u), \tilde{u} - u_\infty\rangle_{(L^q)^*,L^q}\big| \\
&\quad + \big|\langle j'(u), u - u_\infty\rangle_{(L^q)^*,L^q}\big| \\
&\quad + 0.5\langle q_u''(\xi)(u_\infty - u + s(\tilde{u} - u_\infty)), u_\infty - u + s(\tilde{u} - u_\infty)\rangle_{{L^q}^*, L^q} \\
&\quad + q^{-1}s\|\tilde{u} - u_\infty\|_{L^q}^{q} \\
&\quad + \alpha|\TV(u_\infty) - \TV(u)| \\
&\le s \|j'(u_\infty) - j'(u)\|_{(L^q)^*}  c_1 \\
&\quad + \|u_\infty - u\|_{L^q} c_2\\
&\quad + (\|u_\infty - u\|_{L^q}^2 + s^2)c_3 \\
&\quad + s^q c_4 \\
&\quad + |\TV(u_\infty) - \TV(u)|c_5
\end{align*}
for some $c_1$, $\ldots$, $c_5 > 0$.
\end{proof}
Next, we relate $s$ from \cref{lem:decrease}
for an instationary point to the trust-region
radius.
\begin{lemma}\label{lem:s_trradius_relation}
Let $u$, $\tilde{u}$, $u_\infty \in \BV(\Omega)$
be given. For all $s \in (0,1)$, we set
$\Delta(s) \coloneqq s\|\tilde{u} - u_\infty\|_{L^r} + s^2$. If $\|u - u_\infty\|_{L^r} \le s^2$
holds, then 
\[ \Delta(s) \ge \|u - (u_\infty + s(\tilde{u} - u_\infty))\|_{L^r}
\]
holds. In addition, there exists $s_1 \in (0,1)$ such 
that for all $s \in (0,s_1)$, we have 
\[ s \ge \frac{\Delta(s)}{2\|\tilde{u} - u_\infty\|_{L^r}}
\]
\end{lemma}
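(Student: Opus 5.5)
The first claim follows from the triangle inequality in $L^r(\Omega)$ after splitting the displacement from $u$ to the point on the segment at $u_\infty$. I will write
\[
u - \bigl(u_\infty + s(\tilde{u} - u_\infty)\bigr) = (u - u_\infty) - s(\tilde{u} - u_\infty),
\]
so that
\[
\|u - (u_\infty + s(\tilde{u} - u_\infty))\|_{L^r} \le \|u - u_\infty\|_{L^r} + s\|\tilde{u} - u_\infty\|_{L^r} \le s^2 + s\|\tilde{u} - u_\infty\|_{L^r} = \Delta(s),
\]
using the hypothesis $\|u - u_\infty\|_{L^r} \le s^2$. The functions lie in $\BV(\Omega)$, and in the intended application in $\feas$, so these $L^r$-norms are finite. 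For the statement as written I will either note this tacitly or assume $\|\tilde u - u_\infty\|_{L^r} < \infty$. That assumption is automatic since $\BV(\Omega) \hookrightarrow L^1$ and, in the application, $\tilde u, u_\infty \in \feas \subset L^\infty$.

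For the second claim, set $a \coloneqq \|\tilde{u} - u_\infty\|_{L^r}$, and first handle the degenerate case. The claim needs $a > 0$, otherwise the right-hand side is undefined. In the application $\tilde{u} \neq u_\infty$, because $m_{u_\infty}(\tilde{u}) < m_{u_\infty}(u_\infty)$ by \cref{lem:decrease}. So I will assume $a > 0$.

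With $a > 0$, the desired inequality $s \ge (sa + s^2)/(2a)$ is equivalent, after multiplying by $2a/s > 0$, to $2a \ge a + s$, that is, $s \le a$. Hence any $s_1 \in (0, \min\{1, a\})$ works, for instance $s_1 \coloneqq \min\{\tfrac12, a\}$. In fact the inequality even holds for $s \in (0,1)$ with $s \le a$.

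There is no real obstacle here. The only care needed is the implicit nondegeneracy $\tilde u \ne u_\infty$ and finiteness of the norm, which I will justify through \cref{lem:decrease} as above.
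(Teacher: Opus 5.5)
Your proof is correct and follows the paper's route: the triangle inequality for the first claim, and for the second your explicit reduction to $s \le \|\tilde u - u_\infty\|_{L^r}$ makes precise the paper's appeal to $s^2 = o(s)$, with the nondegeneracy $\tilde u \neq u_\infty$ rightly supplied by \cref{lem:decrease}. One small correction: for large $r$, finiteness of the $L^r$-norms comes from the $L^\infty$-bounds of $\feas$, not from the embedding $\BV(\Omega) \hookrightarrow L^1(\Omega)$.
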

\begin{proof}
The first claim follows directly from the assumptions
and the triangle inequality. The second claim follows
from the definition of $\Delta(s)$ and $s^2 = o(s)$
as $s \searrow 0$.
\end{proof}
We are now ready to prove that the iterates of \cref{alg:trm} converge to stationary points.
\begin{theorem}\label{thm:asymptotics}
Let 
\cref{ass:j,ass:trust-region_model,ass:approx_step} 
hold with $\kappa_{umh} \ge 1$ uniformly on a subset 
$\calF \subset \feas$. Let $r > 0.5q$.
Let $j : \feas \to \R$ be bounded below.
Let $j' : L^q(\Omega) \to (L^q(\Omega))^*$ be
continuously differentiable on $\feas$ with
bounded derivative on $\feas$.
Then the sequence of iterates produced by
\cref{alg:trm} is feasible with monotonically
decreasing objective values. Every subsequence
of iterates admits a weak-$^*$ cluster point in $\BV(\Omega)$. Every weak-$^*$ cluster point
is feasible, strict,
also a cluster point in $L^q(\Omega)$,
and stationary.
\end{theorem}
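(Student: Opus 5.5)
The plan follows the structure of \S11 in \cite{conn2000trust}, with \cref{lem:decrease,lem:mq_mtildeu_approx,lem:s_trradius_relation} taking the place of Cauchy decrease.

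\emph{Elementary claims.} Feasibility holds because every trial point $v_k$ solves \eqref{eq:trp} approximately and so lies in $\feas$; iterates are either $u_k$ or $v_k$. Monotonicity comes from the acceptance test. For an accepted step, $\sigma_k \ge \underline{\sigma}>0$, and by \cref{ass:approx_step} together with the optimality of the exact subproblem solution the predicted reduction is nonnegative. Hence $j(u_{k+1})+\alpha\TV(u_{k+1}) \le j(u_k)+\alpha\TV(u_k)$. Since $j$ is bounded below and the iterates lie in $[u_\ell,u_u]$, we get $\sup_k(\|u_k\|_{L^1}+\TV(u_k))<\infty$. Weak-$^*$ compactness in $\BV(\Omega)$ then gives cluster points. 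Compact embedding into $L^1$ and the uniform $L^\infty$ bound give convergence in $L^q$ via $\|w\|_{L^q}^q\le \|w\|_{L^1}(u_u-u_\ell)^{q-1}$. Closedness of the box gives feasibility.

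\emph{Strictness.} Let $u_{k_i}\to u_\infty$. Lower semicontinuity gives $\TV(u_\infty)\le\liminf \TV(u_{k_i})$. The reverse inequality must come from the algorithm. Since $j$ is continuous in $L^q$ (\cref{ass:j}), the objective values converge monotonically to some limit $J_\infty$, and $j(u_{k_i})\to j(u_\infty)$. So it suffices to show $\limsup \TV(u_{k_i})\le \TV(u_\infty)$. I would get this as a by-product of the stationarity argument. Suppose instead that $\TV(u_{k_i})\to \TV(u_\infty)+\delta$ with $\delta>0$. Then, along a subsequence, the objective at $u_\infty$ is strictly below $J_\infty$. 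The model step towards $u_\infty + s(\tilde u-u_\infty)$, or towards $u_\infty$ itself if it is within the reset radius, yields a predicted decrease of at least roughly $\alpha\delta$. Via \cref{lem:eventual_step_acceptance}, this produces an actual decrease that is bounded below infinitely often, which contradicts convergence of the objective. I expect this TV-gap step, i.e.\ the mismatch between weak-$^*$ and norm topologies, to be the main obstacle. The reset $\Delta_{k+1}=\Delta_{\rm res}$ is exactly what keeps the comparison point inside the trust region.

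\emph{Stationarity, by contradiction.} Suppose $\calC(u_\infty)>0$ and take $\varepsilon,\tilde u$ from \cref{lem:decrease}. By strictness and $L^q$ convergence, $\|u_{k_i}-u_\infty\|_{L^r}\to0$, $|\TV(u_{k_i})-\TV(u_\infty)|\to0$, and $\|j'(u_{k_i})-j'(u_\infty)\|\to0$. Fix $s$ small. \cref{lem:mq_mtildeu_approx} gives $d_{m,\tilde m}\le \tfrac12 s\varepsilon$ for large $i$. With $\Delta(s)$ from \cref{lem:s_trradius_relation}, the point $u_\infty+s(\tilde u-u_\infty)$ is feasible for \eqref{eq:trp} at $u_{k_i}$ whenever $\Delta_{k}\ge\Delta(s)$. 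Hence the predicted reduction is at least $\tfrac12\eta s\varepsilon\ge c\,\Delta(s)$. The radius after the unsuccessful iterations following $k_i$ contracts from $\Delta_{\rm res}$ only until acceptance occurs. The estimate \eqref{eq:pred_vs_ared}, with $\|d\|_{L^q}^2=O(\Delta^{2r/q})=o(\Delta)$, shows that acceptance happens at a radius bounded below uniformly in $i$ (via \cref{lem:reduction_bounded_below}-type bounds, made uniform near $u_\infty$). Each such acceptance decreases the objective by at least $\underline{\sigma}c\,\underline\Delta>0$. Infinitely many such decreases contradict boundedness below, so $\calC(u_\infty)=0$.
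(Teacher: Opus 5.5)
Your proposal is correct and follows essentially the same route as the paper. It has the same elementary claims, and the same reset-based contradiction for stationarity built on \cref{lem:decrease,lem:mq_mtildeu_approx,lem:s_trradius_relation} with the $o(\Delta)$ bound from $r>0.5q$. Your strictness argument, which the paper only sketches by deferring to the cited literature, is made concrete by using $u_\infty$ itself as comparison point in \eqref{eq:trp}.

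One small correction in that strictness step. The uniform lower bound on the decrease comes directly from $\pred \ge \eta(\alpha\delta - o(1))$ at every radius $\Delta \ge \|u_{k_i}-u_\infty\|_{L^r}$, combined with $|\ared-\pred| \le C\Delta^{2r/q}$ and the reset. It does not come from \cref{lem:eventual_step_acceptance}, which presupposes $\calC(u_k)>0$ and gives no quantitative decrease.
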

\begin{proof}
First, $u_0 \in \feas$ and 
\cref{ass:approx_step} imply inductively
that $u_k$ is feasible for all iterations $k$.
In addition,
$\tilde{m}_{u_k}(u_k) - \inf \eqref{eq:trp} \ge 0$ 
and \cref{ass:approx_step} imply
$\tilde{m}_{u_k}(u_k) - \tilde{m}_{u_k}(v_k) \ge 0$,
that is, the predicted reduction in the denominator
of $\sigma_k$ is nonnegative for all iterations
$k$. In turn, the acceptance criterion in Line
\ref{ln:acceptance} and the computation of $\sigma_k$ 
in Line \ref{ln:ratio} imply that the actual 
reduction $j(u_k) + \alpha \TV(u_k) - j(u_{k+1}) - 
\alpha \TV(u_{k+1})$ is nonnegative for successful 
iterations ($u_{k+1}$ is set to $v_k$)
and zero for unsuccessful iterations ($u_{k+1}$
is set to $u_k$). As a consequence, the sequence of 
objective values decreases monotonically.
Since, $j$ is bounded below, we obtain that $\sup_{k \in \N} \TV(u_k) \le \alpha^{-1}j(u_0) + \TV(u_0) - \inf_{k \in \N} \alpha^{-1}j(u_k) < \infty$.
Due to the box constraints $u_\ell \le u_k \le u_u$
a.e., we obtain $\sup_{k\in \N} \|u_k\|_{L^1} < \lambda(\Omega)(u_u - u_\ell)$. This means that
the $u_k$ are in $\BV(\Omega)$ and hence every subsequence admits a weak-$^*$ cluster point.

Since weak-$^*$ convergence in $\BV(\Omega)$
implies convergence in $L^1(\Omega)$ and thus
pointwise a.e.\ convergence of a subsequence,
the cluster point is feasible.

If a weak-$^*$ cluster point would not be strict,
the actual reduction can be
bounded below by a fraction of the difference
between the $\liminf$ of the total variation of
the iterates and the total variation for successful
iterations in the vicinity of the cluster point. 
This would lead to the contradiction
$j(u_k) + \alpha \TV(u_k) \to -\infty$. 
We refer to the corresponding parts of the
proofs of Theorem 4.23 in  
\cite{leyffer2022sequential}
and Theorem 6.4
(Outcome 3, part 2) in \cite{manns2023integer}
for a detailed argument.

Every weak-$^*$ cluster point in $\BV(\Omega)$
is a cluster point in $L^1(\Omega)$ and,
due $(u_k)_k \subset \feas$ with $L^\infty$-bounds
on $\feas$, also a cluster point in $L^q(\Omega)$.

It remains to show that every such cluster point is
stationary. Let $u_\infty \in \BV(\Omega) \cap \feas$
be such a cluster point with strictly approximating 
sequence of $(u_{k_\ell})_\ell \subset \BV(\Omega) \cap \feas$, that is,
\[ u_{k_\ell} \to u_\infty \text{ in } L^q(\Omega)
\quad\text{and}\quad \TV(u_{k_\ell}) \to \TV(u_\infty).
\]
We assume by way of contradicting that
$C(u_\infty) > 0$ and are going to show that
\[ j(u_{k_\ell}) + \TV(u_{k_\ell})
   - j(v_{k_\ell}) + \TV(v_{k_\ell})
   \eqqcolon \ared(k_\ell)
   \ge \underline{\sigma}
   \pred(k_\ell)
   \coloneqq
   \underline{\sigma}(\tilde{m}_{u_k}(u_k) - \tilde{m}_{u_k}(v_k))
   \ge \underline{\sigma}c^*
\]
for some $c^* > 0$. This then implies infinitely
many successful iterations that
each decrease the objective by at least $c^*$,
which contradicts $j + \alpha \TV$ being bounded
from below. Since $C(u_\infty) > 0$, there are
$\tilde{u} \in \BV(\Omega) \cap \feas$ and $\varepsilon > 0$ asserted by \cref{lem:decrease}.

From \eqref{eq:pred_vs_ared}, it follows
that for all $\delta > 0$, there exists $\Delta^\delta > 0$ such that $\Delta_{k_\ell} \le \Delta^\delta$ implies $|\ared(k_\ell) - \pred(k_\ell)| \le \delta \Delta_{k_\ell}$. We choose
\[ \delta
\coloneqq 
\frac{\eta (1 - \underline{\sigma})\varepsilon}{4\|\tilde{u} - u_\infty\|_{L^r}},
\]
where $\eta > 0$ is the one assumed in 
\cref{ass:approx_step}, so that
$\Delta_{k_\ell} \le \Delta^\delta$
implies
\begin{gather}\label{eq:pred_lb_first_estimate} 
\frac{\eta\Delta_{k_\ell}\varepsilon}{4\|\tilde{u} - u_\infty\|_{L^r}}
\ge \frac{|\ared(k_\ell) - \pred(k_\ell)|}{1 - \underline{\sigma}}
\end{gather}
Next, let $s \in (0,s_1)$ for the $s_1 \in (0,1)$ asserted by \cref{lem:s_trradius_relation}. Since the map $s \mapsto \Delta(s)$ from \cref{lem:s_trradius_relation}
is continuous, we can choose $s^* \in (0,s_1)$ such that $\Delta(s^*) \coloneqq \max \big\{ \gamma^\ell \Delta_{\text{res}} : \ell \in \N \text{ and } \gamma^\ell \Delta_{\text{res}} \le \min\{\Delta(s_1),\Delta^\delta\}\big\}$
so that $\Delta(s^*)$ is the trial trust-region radius
of \cref{alg:trm} after $\log_{\gamma}(\Delta(s^*))$
unsuccessful iterations.

We now consider that $\ell_0 \in \N$ large enough such
that for all $\ell \ge \ell_0$, we have
\begin{gather}\label{eq:ukl_uinfty_close}
\max\big\{\|j'(u_{k_\ell}) - j'(u_{\infty})\|_{(L^q)^*},
\|u_{k_\ell} - u_\infty\|_{L^q}^2,
\|u_{k_\ell} - u_\infty\|_{L^r}^{\frac{1}{2}},
|\TV(u_k) - \TV(u_\infty)|^{\frac{1}{2}}\big\}
\le s_*
\end{gather}
for $0 < s_* \coloneqq \min\{s^*, \sqrt{c^{-1} 0.5 \varepsilon s^*}\}$, 
where $c$ is the constant from \cref{lem:mq_mtildeu_approx}.

Consequently, if $\Delta_{k_\ell} \ge \Delta(s^*)$
holds, \cref{lem:s_trradius_relation}
implies that $u_\infty + s^*(\tilde{u} - u_\infty)$
is feasible for \eqref{eq:trp} in iteration $k_\ell$.
In this situation, we have
\begin{align*}
\pred(k_\ell)
&\ge \eta\big(\tilde{m}_{u_{k_\ell}}(u_{k_\ell}) - \inf \eqref{eq:trp}\big) && \text{\scriptsize \cref{ass:approx_step}} \\
&\ge \eta\big(\tilde{m}_{u_{k_\ell}}(u_{k_\ell}) 
- \tilde{m}_{u_k}(u_\infty + s^*(\tilde{u} - u_\infty))\big)
&& \text{\scriptsize feasibility} \\
&\ge \eta\big(m_{u_\infty}(u_\infty) - m_{u_\infty}(u_\infty + s^*(\tilde{u} - u_\infty))
- 0.5 s^* \varepsilon\big) 
&& \text{\scriptsize \eqref{eq:ukl_uinfty_close},
\cref{lem:mq_mtildeu_approx}} \\
&\ge 0.5 \eta s^* \varepsilon
&& \text{\scriptsize \cref{lem:decrease}} \\
&\ge \frac{\eta \varepsilon \Delta(s^*)}{4\|\tilde{u} - u_\infty\|_{L^r}}
&& \text{\scriptsize \cref{lem:s_trradius_relation}} \\
&\ge \frac{|\ared(k_\ell) - \pred(k_\ell)|}{1 - \underline{\sigma}}.
&& \text{\scriptsize \eqref{eq:pred_lb_first_estimate}
if $\Delta_{k_\ell} = \Delta(s^*)$}
\end{align*}
The second to last estimate implies 
$\pred(k_\ell) \ge c^* \coloneqq 0.25 \eta \varepsilon \Delta(s^*) \|\tilde{u} - u_\infty\|_{L^r}^{-1} > 0$ and the last estimate
implies that if $\Delta_{k_\ell} = \Delta(s^*)$,
then
$\ared(k_\ell) \ge \underline{\sigma}\pred(k_\ell)$
and the iteration is successful.

We use these two insights to close the proof with a case distinction. If are infinitely many 
different iterates in the subsequence,
then for all $\ell \ge \ell_0$, there exists
an earliest iteration $p(k_{\ell}) = \min\{k : u_{k_\ell} = u_k\}$ and we have
$\pred(k) \ge c^*$ for all $k \in \{p(k_\ell),\ldots,p(k_\ell) + \log_{\gamma}(\Delta(s^*))\}$.
In addition, $\ared(k) \ge \underline{\sigma}\pred(k)$
for $k \le p(k_\ell) + \log_{\gamma}(\Delta(s^*))$
so that the iteration is successful with a decrease of
at least $c^*$. As a consequence, we obtain infinitely
many successful iterations of this type, implying
$j(u_k) + \alpha \TV(u_k) \to -\infty$, which contradicts $j + \alpha \TV$ being bounded from below.
If there are only finitely many different iterates
in the subsequence, we have $u_{k_\ell} = u_\infty$ 
eventually and
\cref{lem:eventual_step_acceptance} implies
a reduction of the objective and change of the iteration without the possibility to come back
and thus this case
cannot happen.
\end{proof}

\bibliographystyle{plain}
\bibliography{biblio}
\end{document}